\documentclass[a4paper,11pt,reqno,
final
]{amsart}	

\usepackage{amsmath}
\usepackage{amssymb}
\usepackage{amsthm}
\usepackage{graphicx}
\usepackage{dcolumn}
\usepackage{bm}
\usepackage{psfrag}      
\usepackage{subfig}
\usepackage{pstricks}
\usepackage{setspace}  

\newtheorem{theorem}{Theorem}[section]
\newtheorem{lemma}[theorem]{Lemma}
\newtheorem{proposition}[theorem]{Proposition}
\newtheorem{corollary}[theorem]{Corollary}
\newenvironment{definition}[1][Definition]{\begin{trivlist}
\item[\hskip \labelsep {\bfseries #1}]}{\end{trivlist}}

\newenvironment{remark}[1][Remark]{\begin{trivlist}
\item[\hskip \labelsep {\bfseries #1}]}{\end{trivlist}}

\newcommand{\sd}{\mathop{\rm sd}}

\newcommand{\sn}{\mathop{\rm sn}}
\newcommand{\dn}{\mathop{\rm dn}}

\newcommand{\sign}{\mathop{\rm sign}}

\def\R{{\mathbb R}}

\def\N{\mathbb{N}}

\def\rmd{{\mathrm d}}

\def\rmi{{\mathrm i}}

\begin{document}

\title[An asymptotic universal focal decomposition]{An asymptotic universal focal decomposition for non-isochronous potentials}

\author[C. A. A. de Carvalho]{C. A. A. de Carvalho}
\address[C. A. A. de Carvalho]{Universidade Federal do Rio de Janeiro, Rio de Janeiro, Brazil}
\email{aragao@if.ufrj.br}

\author[M. M. Peixoto]{M. M. Peixoto}
\address[M. M. Peixoto]{Instituto de Matem\'atica Pura e Aplicada, Rio de Janeiro, Brazil}
\email{peixoto@impa.br}

\author[D. Pinheiro]{D. Pinheiro}
\address[D. Pinheiro]{CEMAPRE, ISEG-Universidade T\'ecnica de Lisboa, Lisboa, Portugal}
\email{dpinheiro@iseg.utl.pt}

\author[A. A. Pinto]{A. A. Pinto}
\address[A. A. Pinto]{Departamento de Matem\'atica, Universidade do Minho, Braga, Portugal}
\email{aapinto@math.uminho.pt}

\begin{abstract}
Galileo, in the XVII century, observed that the small oscillations of a pendulum seem to have constant period. In fact, the Taylor expansion of the period map of the pendulum is constant up to second order in the initial angular velocity around the stable equilibrium. It is well known that, for small oscillations of the pendulum and small intervals of time, the dynamics of the pendulum can be approximated by the dynamics of the harmonic oscillator. We study the dynamics of a family of mechanical systems that includes the pendulum at small neighbourhoods of the equilibrium but after long intervals of time so that the second order term of the period map can no longer be neglected. We analyze such dynamical behaviour through a renormalization scheme acting on the dynamics of this family of mechanical systems. The main theorem states that the asymptotic limit of this renormalization scheme is universal: it is the same for all the elements in the considered class of mechanical systems. As a consequence, we obtain an universal asymptotic focal decomposition for this family of mechanical systems. This paper is intended to be the first of a series of articles aiming at a semiclassical quantization of systems of the pendulum type as a natural application of the focal decomposition associated to the two-point boundary value problem.
\end{abstract}

\keywords{mechanical systems; renormalization; universality; focal decomposition} 
\subjclass[2000]{70H03 70H09 37E20  34B15}

\maketitle

\section{Introduction} 

\subsection{Focal decomposition}

The concept of focal decomposition was introduced by Peixoto in \cite{Peixoto} (under the name of $\sigma$-decomposition), and further developed by Peixoto and Thom in \cite{Peixoto_Thom}. The starting point was the 2-point boundary value problem for ordinary differential equations of the second order
\begin{eqnarray}\label{2BVP}
&&\ddot{x}=f(t,x,\dot{x}) \ , t,x,\dot{x}\in\R \nonumber \\
&&x(t_1)= x_1 \ , \; x(t_2)=x_2  \ ,
\end{eqnarray}
which was formulated for the equation of Euler at the beginnings of the calculus of variations in the first half of the 18th Century. This is the simplest and oldest of all boundary value problems. Accordingly, there is a vast literature about it, mostly in the context of applied mathematics where one frequently uses the methods of functional analysis. Here we adopt a different point of view: we look for the number $i\in\{0, 1, 2,...,\infty\} = \N$ of solutions of problem \eqref{2BVP}, and how this number varies with the endpoints $(t_1, x_1)$ and $(t_2, x_2)$.

Let $\R^4= \R^2(t_1,x_1)\times\R^2(t_2,x_2)$ be the set of all pairs of points of the $(t, x)$-plane and to each point $(t_1,x_1,t_2,x_2)\in\R^4$ associate the number of solutions $i$ of the boundary value problem \eqref{2BVP}. When $t_1=t_2$, the index $i$ is defined as being 0 if $x_1\ne x_2$, and $\infty$ if $x_1=x_2$. The diagonals $\delta$, $\Delta$ are defined as
\begin{align*}
\delta &= \left\{(t_1,x_1,t_2,x_2)\in\R^4: t_1=t_2\right\} \\
\Delta &= \left\{(t_1,x_1,t_2,x_2)\in\R^4: t_1=t_2 , \; x_1=x_2 \right\} \ .
\end{align*}
Let $\Sigma_i\subset \R^4$ be the set of points to which the index $i$ has been assigned. Clearly $\R^4$ is the disjoint union of all the sets $\Sigma_i$, that is, these $\Sigma_i$ define a partition of $\R^4$. This partition is called the focal decomposition of $\R^4$ associated with the boundary value problem \eqref{2BVP}:
\begin{align*}
\R^4=\Sigma_0\cup\Sigma_1\cup ...\cup\Sigma_\infty\ .
\end{align*}
The decomposition above is the central object of study for a given $f$.

From the above definition, the diagonal $\delta$ is 3-dimensional and is naturally decomposed into the 2-dimensional connected component $\Delta$ belonging to $\Sigma_\infty$ and two 3-dimensional connected components belonging to $\Sigma_0$. This is called the natural stratification of $\delta$.

If one of the endpoints in \eqref{2BVP} is kept fixed, say $(t_1, x_1)=(0, 0)$, then the sets $\Sigma_i$ induce a decomposition of $\R^2(t_2, x_2)$ by the sets $\sigma_i=\Sigma_i\cap\left(\left\{\left(0,0\right)\right\}\times\R^2(t_2,x_2) \right)$. The restricted problem with base point $(0, 0)$ consists of finding the corresponding focal decomposition of $\R^2$ by the sets $\sigma_i$:
\begin{align*}
\R^2=\sigma_0\cup\sigma_1\cup ...\cup\sigma_\infty\ .
\end{align*}
Throughout this paper, when we speak
of the focal decomposition associated to problem \eqref{2BVP}, we will be referring to the focal decomposition of $\R^2$ defined above.

One has some clarification of what the sets $\sigma_i$, $\Sigma_i$ are if one relates them to a certain surface associated to each point $(t,x)$ called the \textit{star} of this point. It is the \textit{lifted manifold} introduced in \cite{Peixoto_4} for other purposes. Let
\begin{eqnarray} \label{ueq}
\frac{dt}{dt}=1\,\,\, , \frac{dx}{dt}=u\,\,\, , \frac{du}{dt}=f(t,x,u)
\end{eqnarray}
be the first system in $\R^3(t,x,u)$ equivalent to \eqref{2BVP}. One calls the \textit{star} associated to the base point $(t,x)$ the surface $S(t,x)\subset \R^3(t,x,u)$ obtained by the union of the trajectories of \eqref{ueq} passing through the points of the line $(t,x,u), -\infty<u<\infty$. One verifies immediately that if $\pi$ is the projection $\pi(t,x,u)=(t,x)$ then: a) with respect to base point $(t_1, x_1)$, $(t_2, x_2) \in \sigma_i$ iff $[\pi\mid S(t_1,x_1)]^{-1} (t_2,x_2)$ consists of $i$ points; also b) $(t_1,x_1,t_2,x_2) \in \Sigma_i$ iff $S(t_1,x_1) \cap S(t_2,x_2)$ consists of $i$ solutions of \eqref{ueq}. See \cite{Peixoto_Thom,Peixoto_Silva} for more details on the relationship between the focal decomposition and stratification theory.

Concerning the star, a final comment seems appropriate. To each curve $\gamma$ solution of a two point boundary value problem, we associate two stars, one at each endpoint, which do intersect. When they do so transversally, we say that $\gamma$ is transversal or ``structurally stable'' because $\gamma$ can not be perturbed away by small variations of the equation. Besides, this situation is generic \cite{Peixoto_4}, i.e. any equation can be approximated by one for which the solutions of the $2$-point boundary problem are ``structurally stable''.

A notorious example of a focal decomposition due to Peixoto and Thom \cite{Peixoto_Thom}, is provided by the focal decomposition of the pendulum equation $\ddot{x}+\sin x=0$ with base point $(0,0)$ (see Figure \ref{UFD_pend_fig}). This focal decomposition contains non-empty sets $\sigma_i$ with all finite indices. Every set $\sigma_{2k-1}$, $k=1,2,...$, consists of a 2-dimensional open set plus the cusp-point $(\pm k\pi,0)$; they all have two connected components. All four connected components of the even-indexed sets $\sigma_{2k}$ are open-arcs, asymptotic to one of the lines $x=\pm\pi$ and incident to the cusp-points $(\pm k\pi,0)$; the lines $x=\pm\pi$ are part of $\sigma_1$, except for the points $(0,\pm \pi)$ which belong to $\sigma_0$.

\begin{figure}[h!]
	\centering
      \psfrag{t}[cc][][0.85][0]{$t$}%
      \psfrag{x}[cc][][0.85][0]{$x$}
      \psfrag{p1}[cc][][0.85][0]{$\pi$}
      \psfrag{p2}[cc][][0.85][0]{$2\pi$}
      \psfrag{p3}[cc][][0.85][0]{$3\pi$}
      \psfrag{-p1}[cc][][0.85][0]{$-\pi$}
      \psfrag{-p2}[cc][][0.85][0]{$-2\pi$}
      \psfrag{-p3}[cc][][0.85][0]{$-3\pi$}
      \psfrag{si}[cc][][0.85][0]{$\sigma_\infty$}
      \psfrag{s0}[cc][][0.85][0]{$\sigma_0$}
      \psfrag{s1}[cc][][0.85][0]{$\sigma_1$}
      \psfrag{s2}[cc][][0.85][0]{$\sigma_2$}
      \psfrag{s3}[cc][][0.85][0]{$\sigma_3$}
      \psfrag{s4}[cc][][0.85][0]{$\sigma_4$}
      \psfrag{s5}[cc][][0.85][0]{$\sigma_5$}
      \psfrag{s6}[cc][][0.85][0]{$\sigma_6$}
      \psfrag{s7}[cc][][0.85][0]{$\sigma_7$}
      \psfrag{s8}[cc][][0.85][0]{$\sigma_8$}
      \psfrag{s9}[cc][][0.85][0]{$\sigma_9$}
      \psfrag{x=p}[cc][][0.85][0]{$x=\pi$}
      \psfrag{x=-p}[cc][][0.85][0]{$x=-\pi$}
   		\includegraphics[width=80mm,angle=-90]{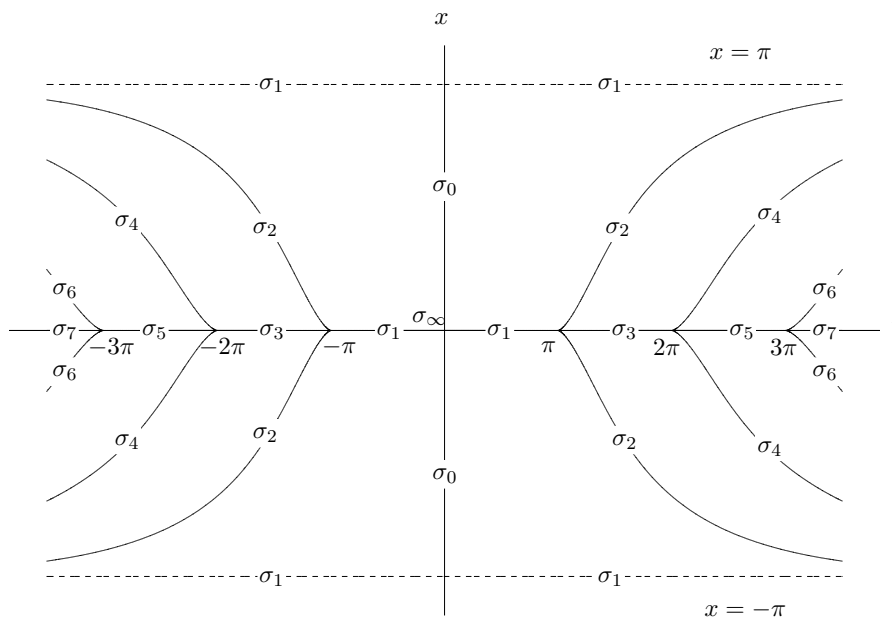}
   		\caption{The pendulum's focal decomposition.}
   		\label{UFD_pend_fig}
\end{figure}

There are many examples in the calculus of variations where rules are given for determining the number of solutions of the boundary value problem \eqref{2BVP} and for which the corresponding focal decomposition can be derived. The first such example seems to be a theorem of Goldschmidt (1831) concerning the number of catenaries passing through two points in the plane \cite[p. 340]{Todhunter}. See also the papers by MacNeish \cite{MacNeish} and Peixoto \cite{Peixoto_4}, and especially the textbook by Collatz \cite[pp. 170ff.]{Collatz}, where several focal decompositions are given. However, the number of distinct indices obtained for the focal decompositions in these examples is very small and the concept itself is not recognized.

After its recognition and early development in \cite{Peixoto,Peixoto_Thom}, the concept of focal decomposition was further developed by Kupka and Peixoto in \cite{Kupka_Peixoto} in the context of geodesics and by Peixoto and Silva in \cite{Peixoto_Silva} regarding the Euler equation associated with a given Lagrangian function $L$. In \cite{Kupka_Peixoto_Pugh}, Kupka, Peixoto and Pugh proved generic results pointing out the existence of residual sets in the space of complete Riemannian metrics on a finite dimensional complete Riemannian manifold $M$, for which bounds upon the number of geodesics of prescribed length on $M$ were obtained. Relationships with the arithmetic of positive definite quadratic forms are considered by Kupka and Peixoto in \cite{Kupka_Peixoto} and by Peixoto in \cite{Peixoto_2,Peixoto_3}, and in \cite{Kupka_Peixoto,Peixoto_2} attention is also drawn to the relationship between focal decomposition and the Brillouin zones of solid state physics, a view that is further developed later by Veerman et al. in  \cite{Veerman_Peixoto_Rocha_Sutherland}. In \cite{Peixoto_3}, it is pointed out that the focal decomposition associated to \eqref{2BVP} is relevant for the computation of the semiclassical quantization of this equation via the Feynman path integral method and in  \cite{Carvalho_Cavalcanti_Fraga_Joras_1,Carvalho_Cavalcanti_Fraga_Joras_2} de Carvalho et al. exhibit further relations with quantum statistical mechanics.

The concept of focal decomposition might also be relevant to the study of caustic formation by focusing wavefronts. This appears in optics (see the paper by Berry and Upstill \cite{Berry_Upstill}), tsunami formation (see the papers by Berry \cite{Berry_1,Berry_2}) or general relativity (see the papers by Friedrich and Stewart \cite{Friedrich_Stewart}, Hasse et al. \cite{Hasse_Kriele_Perlick}, Ellis et al. \cite{Ellis_Bassett_Dunsby} and Ehlers and Newman \cite{Ehlers_Newman}). It is not so surprising that focal decomposition, i.e. the two-point boundary-value problem, is intimately connected to the very formulation of a variational problem, as Euler taught us.

\subsection{Renormalization}

Besides focal decomposition, the other key topic in this paper is renormalization --- the study of asymptotic self-similarity. The main idea behind renormalization is the introduction of an operator --- the renormalization operator --- on a space of systems whose action on each system is to remove its small scale behaviour and to rescale the remaining variables to preserve some normalization. If a system converges to some limiting behaviour under iteration of the renormalization operator then we say that such behaviour is universal. Since the renormalization operator relates different scales, such universal behaviour is self-similar.

In this paper we introduce a new kind of renormalization scheme acting on a family of mechanical systems which includes the pendulum. \emph{This scheme has the distinguishing feature that time is not rescaled, but rather translated, while the initial velocities and space are appropriately scaled.}

Renormalization is extremely relevant in several areas of physics. The first ideas concerning renormalization were introduced in the 1940's in the context of quantum electrodynamics by Bethe, Feynman, Schwinger and Dyson. These ideas were later developed by Stueckelberg and Petermann in \cite{Stueckelberg_Petermann} and by Gell-Mann and Low in \cite{Gell-Mann_Low}, with the introduction of the Renormalization Group as a tool to improve approximate solutions to quantum field theory equations. Later developments in the subject were due to Callan \cite{Callan}, Symanzik \cite{Symanzik} and Weinberg \cite{Weinberg}, among many others. Kadanoff in \cite{Kadanoff} and Wilson in \cite{Wilson} introduced renormalization techniques into statistical mechanics in order to improve the understanding of critical phenomena.

The relevance of renormalization in dynamical systems was first noticed by Feigenbaum in \cite{Feigenbaum_1,Feigenbaum_2} and, independently, by Coullet and Tresser in \cite{Coullet_Tresser_1}, with their discovery of period doubling universality in unimodal maps of the interval. They introduced a renormalization operator --- the period doubling operator --- to show that period doubling sequences for this class of maps are asymptotically self-similar and that these sequences have an identical form for a large open set of such class of maps. This renormalization operator consists of time and space asymptotic normalizations chosen to preserve the dynamical characteristics of the maps under iteration. Infinitely renormalizable maps separate the regions with regular (zero entropy) and chaotic (positive entropy) dynamics and, moreover, their invariant set is universal, i.e. it is the same for all infinitely renormalizable unimodal maps. It was soon realized that the period-doubling operator was just a restriction of another operator acting on the space of unimodal maps –-- the renormalization operator –-- whose dynamical behaviour is much richer.

Renormalization has been a very active research area in dynamical systems in the past two decades:  Sullivan in \cite{Sullivan_1,Sullivan_2}, McMullen in \cite{McMullen}, de Melo and Pinto in \cite{Melo_Pinto}, Lyubich in \cite{Lyubich} and Faria, de Melo and Pinto in \cite{Faria_Melo_Pinto} studied the renormalization operator acting in families of unimodal maps. The renormalization operator naturally appears in several other families of maps, such as families of critical circle maps or families of annulus maps (see, for instance, MacKay \cite{MacKay_thesis,MacKay}, Ostlund et al. \cite{Ostlund_Rand_Sethna_Siggia}, Lanford \cite{Lanford_1,Lanford_2}, de Melo \cite{Melo}, Martens \cite{Martens}, de Faria and de Melo \cite{Faria_Melo_1,Faria_Melo_2} and Yampolsky \cite{Yampolsky_1,Yampolsky_2}).

The main subject of this paper is a renormalization scheme acting on the dynamics of a family of mechanical systems that includes the pendulum. Our motivation for the introduction of such a scheme comes from the restricted focal decomposition with base point $(0,0)$ of the pendulum equation $\ddot{x}+\sin x = 0$ in Figure \ref{UFD_pend_fig}. It turns out that the sequence formed by the even-indexed sets in the pendulum's focal decomposition is approximately self-similar. The renormalization scheme we introduce can then be justified in the following way: for a large integer $n$, we consider the even-indexed set $\sigma_{2n}$ and, contrary to previous renormalizations, we do not rescale time but just shift it so that its origin is at $t=n\pi$. We then restrict the initial velocities to a small interval so that that the index corresponding to the shifted even-indexed set is equal to one; we complete the procedure by normalizing space in such way a that the shifted even-indexed set is asymptotic to the lines $x=\pm 1$. Under iteration of this renormalization scheme, we obtain asymptotic trajectories that define an asymptotic focal decomposition. Both the asymptotic trajectories and focal decomposition are universal and self-similar.

To be more precise, our renormalization scheme acts on the dynamics of a family of mechanical systems (see the books \cite{Abraham_Marsden,Arnold,Mackay_Meiss,Marsden_Ratiu} for details on mechanical systems) defined by a Lagrangian function $L(x,\dot{x})= \dot{x}^2/2-V(x)$, where $V(x)$ is a given non-isochronous potential, i.e. not all the periodic solutions of the corresponding Euler--Lagrange equation have the same period. For more information on isochronous potentials, see the paper \cite{Bolotin_MacKay_2} by Bolotin and MacKay and references therein. We obtain that the asymptotic limit of this renormalization scheme is universal and as a consequence we obtain an universal asymptotic focal decomposition for this family of mechanical systems. 

This paper is the first step towards a broader research program, proposed by Peixoto and Pinto, connecting renormalization techniques, focal decomposition of differential equations and semiclassical physics. The next steps of this research program include a prove of the convergence of renormalized focal decompositions to the universal asymptotic focal decomposition and an extension of this renormalization scheme to obtain a 4-dimensional universal asymptotic focal decomposition, i.e. with no restrictions on the base point of the boundary value problem. The ultimate goal of such program would be to deal with applications of this renormalization procedure to semiclassical physics.

The present paper is structured in the following way: in section \ref{semiclassical}, we show why we believe that our program is relevant for semiclassical physics; in section \ref{intro_results}, we introduce the main concepts we deal with throughout the paper and rigorously state our main results, while section \ref{Ren_sect} is devoted to providing the reader with a global picture of the strategy of the proof  of the main results. The remaining sections are more technical: section \ref{QP} deals with mechanical systems defined by some \emph{quartic potentials} -- we compute explicit solutions of the Euler--Lagrange equation associated with such mechanical systems in terms of Jacobian elliptic functions (see the textbooks \cite{Abramowitz_Stegun,Bowman,Whittaker_Watson} for further details on Jacobian elliptic functions) and use known estimates for such functions to obtain estimates for our explicit solutions; section \ref{Perturbed_Potentials} is devoted to the study of mechanical systems defined by perturbations of the \emph{quartic potentials} -- we use an implicit function argument inspired in a paper by Bishnani and MacKay \cite{Bishnani_MacKay} to obtain estimates for the solutions of this second class of mechanical systems as a continuation of the estimates obtained previously for the explicit solutions associated with the \emph{quartic potentials}; section \ref{period_map} is devoted to the study of the period map for the periodic orbits of this family of mechanical systems; sections \ref{scaling_parameter} and \ref{trajectories} are devoted to the renormalization scheme introduced in this paper. We summarize in section \ref{conclusion}.

\section{Semiclassical physics}\label{semiclassical}
Focal decomposition is in fact a first step towards semiclassical quantization. This was already recognized in the semiclassical calculation of partition functions for quantum mechanical systems, where the need to consider a varying number of classical solutions in different temperature regimes became evident \cite{Carvalho_Cavalcanti}. 

We shall illustrate this for two physical quantities which can be expressed in terms of Feynman path integrals: in quantum mechanics, we shall consider the one-dimensional propagator between an initial point $x_1$, at time $t_1$, and a final point $x_2$, at time $t_2$; in quantum statistical mechanics, we shall study the one-dimensional thermal density matrix element
in position representation. For further details on semiclassical quantization, see the textbook \cite{Feynman_Hibbs} by Feynman and Hibbs or the review paper \cite{DeWitt-Morette} by DeWitt-Morette.

In the quantum mechanical case, the propagator is just the time evolution operator $\exp[-\rmi H(t_2-t_1)/\hbar]$ computed between position eigenstates $|x_1\rangle$ and $|x_2\rangle$
\begin{equation*}
G(x_1, t_1; x_2, t_2) = \langle x_2| \exp[-\rmi H(t_2- t_1)/\hbar]|x_1\rangle \ ,
\end{equation*}
where $H$ is the Hamiltonian
\begin{equation*}
H = \frac{p^2}{2m} + V (x) \ .
\end{equation*}
The propagator may be written as a path integral
\begin{equation}\label{SC1}
G(x_1, t_1; x_2, t_2) = \int^{z(t_2)=x_2}_{z(t_1)=x_1}[Dz(t)]\exp\left(\rmi \frac{S[z]}{\hbar}\right) \ ,
\end{equation}
where $S$ is the classical action of the mechanical problem
\begin{equation*}
S[z] = \int^{t_2}_{t_1} \left[\frac{1}{2}m\dot{z}^2 - V (z)\right] \rmd t\ .
\end{equation*}
The integral over $z$ in \eqref{SC1} stands for a sum over all trajectories that connect $x_1$ at $t_1$ to $x_2$ at $t_2$, an object whose mathematical characterization has led to much investigation over the years.

We shall only be interested in the leading semiclassical expression for the propagator, which can be formally derived from \eqref{SC1}. The resulting expression is
\begin{equation*}
G_{sc}(x_1, t_1; x_2, t_2) =\sum_{n=1}^{i} \left\{\det[S^{(2)}_n]\right\}^{-1/2}\exp\left(\rmi \frac{S_n}{\hbar}\right) \ .
\end{equation*}
The $i$ in the upper limit is the same as in the focal decomposition, and indicates that the approximation is restricted to all classical trajectories, i.e. all solutions of the classical equation of motion which satisfy the boundary conditions $z(t_1) = x_1$ and $z(t_2) = x_2$. The $S_n$ in the exponential stands for the value of the action of the $n^{\text{th}}$ classical trajectory, whereas $S^{(2)}_n$ denotes the second functional derivative of the action with respect to $z(t)$, computed at the $n^{\text{th}}$ classical trajectory. The inverse square root of the determinant
of that operator, the so-called van Vleck determinant, accounts for the first quantum corrections in a semiclassical expansion.

In the case of quantum statistical mechanics, the thermal density matrix element is just the Boltzmann operator $\exp[-\beta H]$ computed between position eigenstates $|x_1\rangle$ and $|x_2\rangle$, i.e. $\rho(x_2, x_1) = \langle x_2| \exp[-\beta H |x_1\rangle$. The path integral for this quantity is given by
\begin{equation*}
\rho(x_2, x_1) = \int^{z(\beta\hbar)=x_2}_{z(0)=x_1}[Dz(\tau)]\exp\left(-\frac{S[z]}{\hbar}\right) \ ,
\end{equation*}
with the so-called euclidean action defined as
\begin{equation*}
S[z] = \int^{\beta\hbar}_{0} \left[\frac{1}{2}m\dot{z}^2 + V (z)\right]\rmd\tau \ .
\end{equation*}

There is a crucial difference between this formula and that for the classical action of the mechanical problem: the sign in front of the potential. In fact, in the path integral formulation of quantum statistical mechanics one is led to investigate the mechanical problem defined by minus the potential. With this in mind, one may proceed along the same lines as in quantum mechanics to obtain a semiclassical approximation to the thermal density matrix element. It is given by
\begin{equation*}
\rho_{sc}(x_2, x_1) =\sum_{n=1}^{i'} \left\{\det[S^{(2)}_n]\right\}^{-1/2}\exp\left(-\frac{S_n}{\hbar}\right) \ .
\end{equation*}
Another important difference with respect to quantum mechanics is that the sum runs only over those solutions of the (euclidean) equation of motion satisfying the boundary conditions which are local minima of the euclidean action $S[z]$, whereas in quantum mechanics all solutions, i.e. any extremum, must be taken into account. That is why we use $i'$ as an upper limit of the sum, defined as the number of extrema that are minima. Clearly, $i'\le i$. In reality, we need a refinement of the focal decomposition to tell us which of the solutions are local minima.

Either in quantum mechanics or in quantum statistical mechanics, the semiclassical approximation has to sum over all, or part of, the classical paths satisfying fixed point boundary conditions. Given the pairs $(x_1, t_1)$ and $(x_2, t_2)$, or $(x_1, 0)$ and $(x_2, \beta\hbar)$, the number and type of classical trajectories are the very ingredients which lead to a focal decomposition. It should, therefore, be no surprise that the focal decomposition can be viewed as the starting point for a semiclassical calculation.

As for the renormalization procedure, it was introduced to study the behavior of classical trajectories for very short space and very long time separations of the fixed endpoints. It maps those trajectories into $n$-renormalized ones, whose time separations are shifted by $n$ half-periods, and whose space separations are scaled up to values of order one. As will be shown in the sequel, this procedure converges to an asymptotic universal family of trajectories that have a well-defined and simple functional form, and which define an asymptotic universal focal decomposition self-similar to the original one.

The natural question to pose is whether the combination of focal decomposition and renormalization can be used to calculate semiclassical expansions for propagators in the short space, long time separation of the endpoints, or analogously, for thermal density matrices for short space separation and low temperatures (long euclidean time $\beta\hbar$ is equivalent to low temperatures $T = 1/(k_B\beta)$) by using the simple asymptotic forms alluded to in the previous paragraph.

The conjecture to be investigated in a forthcoming article is that this can be done in a relatively simple way, thanks to the simple form of the asymptotes. This will bypass a much more difficult (if not impossible) calculation involving Jacobi's elliptic functions. Should our expectation be realized, we would obtain semiclassical estimates for both propagators and thermal density matrices in the short space/long time or short space/low temperature limits. Expressing those quantities in terms of energy eigenfunctions and energy eigenvalues, we have
\begin{equation*}
G(x_1, t_1; x_2, t_2) = \sum_{m=1}^\infty \psi_m^*(x_2)\psi_m(x_1)\exp(-\rmi E_m(t_2-t_1)/\hbar) \ ,
\end{equation*}
in the case of quantum mechanics, or
\begin{equation}\label{SC2}
\rho(x_2, x_1) = \sum_{m=1}^\infty \psi_m^*(x_2)\psi_m(x_1)\exp(-\beta E_m)
\end{equation}
in the case of quantum statistical mechanics.

Clearly, in the latter case, if we take $\beta$ large, only the lowest energy $E_0$ (the ground state) will contribute. Furthermore, the points $x_1$ and $x_2$ are to be taken in the limit of short space separation. If we choose one of them to be the origin, then \eqref{SC2} becomes
\begin{equation*}
\rho(x_2, x_1) \approx \rho(0, 0) \approx |\psi_0|^2\exp(-\beta E_0)  \ .
\end{equation*}
This means that the combination of focal decomposition and renormalization may lead us to a direct estimate of the ground state energy for a quantum mechanical system from the asymptotic forms obtained in this article.

\section{Main Theorems and Definitions}\label{intro_results}

We start this section by fixing notation and introducing basic definitions which will be used throughout the paper. We also state the main results to be proved in the following sections.

\subsection{Setting}
We consider mechanical systems defined by a Lagrangian function ${\mathcal L}: \R^{2} \rightarrow \R $ of the form
\begin{equation} \label{L_P}
{\mathcal L}\left(q,\frac{\rmd q}{\rmd \tau}\right)=\frac{1}{2}m\left(\frac{\rmd q}{\rmd \tau}\right)^{2}-{\mathcal V}(q) \ ,
\end{equation}
where the potential function ${\mathcal V}:\R\rightarrow \R$ is a non--isochronous potential. Furthermore, we assume that the potential ${\mathcal V}$ is a $C^{\kappa}$ map ($\kappa \ge 5$) with a Taylor expansion at a point $q^*\in\R$ given by
\begin{equation*}
{\mathcal V}(q) = {\mathcal V}(q^*) + \frac{{\mathcal V}''(q^*)}{2}(q-q^*)^2 + \frac{{\mathcal V}^{(4)}(q^*)}{4!}(q-q^*)^4 \pm O\left(\left|q-q^*\right|^{5}\right) \ ,
\end{equation*}
where ${\mathcal V}''(q^*)>0$ and ${\mathcal V}^{(4)}(q^*)\ne 0$.
The  Euler--Lagrange equation associated with \eqref{L_P} is
\begin{equation}\label{EL}
m\frac{\rmd^2 q}{\rmd \tau^2} = -\frac{\rmd {\mathcal V}}{\rmd q}(q)  \ ,
\end{equation}
and the corresponding Hamilton equations are given by
\begin{eqnarray}\label{H}
\frac{\rmd q}{\rmd \tau}&=&\frac{p}{m}\nonumber \\
\frac{\rmd p}{\rmd \tau}&=&-\frac{\rmd {\mathcal V}}{\rmd q}(q)  \ .
\end{eqnarray}
Therefore, the point $q^*$ is an elliptic equilibrium of \eqref{EL} (or equivalently, $(q^*,0)$ is an elliptic equilibrium of \eqref{H}) and thus, there is a 1--parameter family of periodic orbits covering a neighbourhood of the equilibrium point. 

\subsection{Asymptotic universal behaviour for the trajectories}
Since $q^*$ is an elliptic equilibrium of \eqref{EL} there is $\epsilon>0$ such that for all \emph{initial velocity} ${\nu\in[-\epsilon,\epsilon]}$ the solutions $q(\nu;\tau)$ of the Euler--Lagrange equation \eqref{EL} with initial conditions $q(\nu;0)=q^*$ and $\rmd q/\rmd\tau(\nu;0)=\nu$ are periodic.  Thus, the \emph{trajectories} $q:\left[-\epsilon,\epsilon\right]\times\R\rightarrow\R$ of \eqref{EL} are well-defined by $q(\nu;\tau)$ for all $\tau\in\R$ and $\nu\in[-\epsilon,\epsilon]$. Furthermore, there exist $\alpha>0$ small enough and $N \ge 1$ large enough such that, for every $n \ge N$, the \emph{$n$-renormalized trajectories} $x_{n}:\left[-1,1\right]\times[0,\alpha n]\rightarrow\R$ are well-defined by
\begin{equation*}
x_{n}(v;t)= (-1)^n~\Gamma_{n,t}^{-1}~\mu^{-1} \left[ q\left( \Gamma_{n,t} ~ \mu~ \omega~v
;\frac{n\pi-\ell t}{\omega}\right) - q^* \right]\ ,
\end{equation*}
where $\Gamma_{n,t}$ is the \emph{$(n,t)$-scaling parameter}
\begin{equation}\label{gamma}
\Gamma_{n,t} = \left(\frac{ 8 t}{3\pi n}\right)^{1/2} \ ,
\end{equation}
$\ell=\pm 1$ depending on the sign of ${\mathcal V}^{(4)}(q^*)$ and $\omega$ and $\mu$ are given by
\begin{equation}\label{omega_mu}
\omega = \left(\frac{V''(q^*)}{m}\right)^{1/2} \ , \qquad \mu = \left(\frac{3!V''(q^*)}{|V^{(4)}(q^*)|}\right)^{1/2} \ .
\end{equation}
Note that $\omega^{-1}$ and $\mu$ are the natural time and length scales for the dynamical system defined by \eqref{EL}. Furthermore, the variables $v$ and $t$ are dimensionless, as well as the $(n,t)$-scaling parameter $\Gamma_{n,t}$. Therefore, the $n$-renormalized trajectories $x_{n}(v;t)$ are dimensionless.

\begin{definition} \label{def2}
The \emph{asymptotic trajectories} $X_\ell: \left[-1,1\right]\times \R_0^+ \rightarrow \R$ are defined by
\begin{equation*}
X_\ell(v;t) = v  ~\sin \left(\ell t \left(v^2 -1 \right)  \right) \ ,
\end{equation*}
where $\ell=\pm 1$ depending on the sign of ${\mathcal V}^{(4)}(q^*)$.
\end{definition}

The rest of the paper is mainly devoted to prove the following result and some of its consequences.
\begin{theorem} \label{corolario_traj_q2_intro}
There exists $\beta>0$ small enough, such that, for every $0 < \epsilon < 1/3$, we have that
\begin{eqnarray*} 
\|x_{n}(v;t) -  X_\ell ( v;t)  \|_{C^0(\left[-1,1\right] \times [0, \beta n^{1/3-\epsilon}],\R)} & < & O\left( {n^{-3\epsilon/2 }}\right) \ ,
\end{eqnarray*}
where $\ell$ is  the sign of ${\mathcal V}^{(4)}(q^*)$.
\end{theorem}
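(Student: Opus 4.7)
The plan is to prove the theorem by comparing $x_n$ to $X_\ell$ through an intermediate object, namely the $n$-renormalized trajectory $x_{n,4}$ of the quartic truncation of $\mathcal{V}$. Writing $\mathcal{V}(q) = V_4(q) + R(q)$, with $V_4$ the quartic Taylor polynomial of $\mathcal{V}$ at $q^*$ and $R(q) = O(|q-q^*|^5)$, I would split $\|x_n - X_\ell\|_{C^0}$ by the triangle inequality into the \emph{quartic versus asymptotic} piece $\|x_{n,4}-X_\ell\|_{C^0}$, controlled by sections \ref{QP} and \ref{period_map}, and the \emph{full versus quartic} piece $\|x_n - x_{n,4}\|_{C^0}$, controlled by section \ref{Perturbed_Potentials}.

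For the first piece, the closed-form solution for the quartic system yields $q_4(\nu;\tau)-q^* = A(\nu)\,\phi(\Omega(\nu)\tau,k(\nu))$ for a Jacobi elliptic function $\phi$ chosen according to the sign of $\mathcal{V}^{(4)}(q^*)$, with $A(\nu)=\nu/\omega+O(\nu^3)$, $\Omega(\nu)=\omega(1+a\nu^2+O(\nu^4))$ and $k(\nu)^2=O(\nu^2)$. Combining the Taylor expansion of $\phi$ in $k^2$ with the period-map expansion, the accumulated phase at $\tau=(n\pi-\ell t)/\omega$ is
\begin{equation*}
\Omega(\nu)\tau = n\pi - \ell t + a\pi n\nu^2 + O(n\nu^4)\, .
\end{equation*}
Substituting $\nu^2=\Gamma_{n,t}^2\mu^2\omega^2 v^2 = 8t\mu^2\omega^2 v^2/(3\pi n)$ collapses the anharmonic correction $a\pi n\nu^2$ to exactly $\ell t v^2$ --- this matching is precisely why $\Gamma_{n,t}$ is defined as in \eqref{gamma} --- so $\Omega(\nu)\tau = n\pi + \ell t(v^2-1) + O(n\nu^4)$. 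Expanding the sine, factoring out $(-1)^n$, and using $(-1)^n\Gamma_{n,t}^{-1}\mu^{-1}A(\nu) = v + O(\nu^2)$ then gives $x_{n,4} = X_\ell + O(n\nu^4) + O(\nu^2)$; on the window both errors are $O(n^{-1/3-2\epsilon}) = o(n^{-3\epsilon/2})$ whenever $\epsilon<1/3$.

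For the second piece, the Bishnani--MacKay-style implicit function argument provides the sharp rate. Writing $q=q_4+\delta q$ and linearising the Euler--Lagrange equation around $q_4$ gives a Hill-type equation for $\delta q$ forced by $-R'(q_4)/m$, which has size $O(\nu^4)$ since $R'(q) = O(|q-q^*|^4)$ and $|q_4-q^*|=O(|\nu|)$. For small amplitude, the quartic equilibrium is elliptic and the Floquet multipliers of the linearised equation have modulus one, so its Green function is bounded; variation of constants then yields $|q(\nu;\tau)-q_4(\nu;\tau)|\le C\nu^4\tau$ uniformly for $\tau \le n\pi/\omega + O(n^{1/3-\epsilon})$. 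Rescaling by $\Gamma_{n,t}^{-1}\mu^{-1}$ gives $\Gamma_{n,t}^{-1}\mu^{-1}|\delta q| \le C\Gamma_{n,t}^3 n = O(t^{3/2}/n^{1/2}) = O(n^{-3\epsilon/2})$, which is exactly the target rate.

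The main obstacle is that the rate $n^{-3\epsilon/2}$ is sharp and arises entirely from the perturbative step: any polynomial loss --- a secular factor $\tau^2$ in place of $\tau$ in variation of constants, or a missed $O(\nu^4)$ term in the expansion of the period map --- would destroy it. Accordingly the perturbation estimate requires the full strength of Floquet theory (not just Gronwall bounds, which would produce exponential growth), the period-map expansion must be tracked through order $\nu^4$, and constants must be kept uniform in $v\in[-1,1]$ and $t\in[0,\beta n^{1/3-\epsilon}]$; the degeneration $\Gamma_{n,t}\to 0$ at $t=0$ is rendered harmless by the simultaneous vanishing $x_n(v;0) = X_\ell(v;0) = 0$.
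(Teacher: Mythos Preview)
Your splitting and your treatment of the quartic-versus-asymptotic piece are essentially what the paper does. The gap is in the second piece. The claim ``the Floquet multipliers of the linearised equation have modulus one, so its Green function is bounded'' is false here: linearising around the periodic orbit $q_4(\nu;\cdot)$ gives a Hill equation whose monodromy has both multipliers equal to $1$ but is a nontrivial Jordan block, precisely because the system is non-isochronous ($T'(\nu)\ne 0$). One fundamental solution is $\dot q_4$ (bounded, of size $O(\nu)$), the other is $\partial_\nu q_4$, which contains the secular piece $-\tfrac{T'(\nu)}{T(\nu)}\tau\,\dot q_4$ and therefore grows like $O(1+\nu^2\tau)$. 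A naive variation-of-constants bound then gives $|\delta q|\le C(\nu^4\tau+\nu^6\tau^2)$, and after rescaling by $\Gamma_{n,t}^{-1}$ the term $\nu^6\tau^2$ becomes $O(t^{5/2}/n^{1/2})=O(n^{1/3-5\epsilon/2})$, which does \emph{not} beat $n^{-3\epsilon/2}$ for $\epsilon<1/3$. The bound $|\delta q|\le C\nu^4\tau$ you want is in fact correct, but it hinges on the cancellation $\dot q_4\,R'(q_4)=\tfrac{d}{d\tau}R(q_4)$, which makes $\int_0^\tau \dot q_4 R'(q_4)\,ds$ bounded rather than $O(\nu^4\tau)$ and kills the secular part of $\int_0^\tau \partial_\nu q_4\,R'(q_4)\,ds$ after an integration by parts; this is the missing ingredient, not ``boundedness of the Green function''.

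The paper sidesteps the whole long-time issue. Instead of comparing $q$ and $q_4$ over $\tau=O(n)$, it first computes the period map of the \emph{full} perturbed system, $T(v)=2\pi-\tfrac{3\pi}{4}\ell v^2+O(|v|^3)$ (Proposition~\ref{period-map1}), and then uses the exact periodicity $x(v;s+T(v))=x(v;s)$ to rewrite $n\pi-\ell t=\tfrac{n}{2}T(\Gamma v)+R_{\ell,v,t}+O(t^{3/2}/n^{1/2})$ (Lemma~\ref{gamma_per}) and reduce the evaluation to a single period. On that bounded interval the Bishnani--MacKay implicit-function estimate $\|x-x_\ell\|_{C^2([0,T])}=O(v^4)$ (Lemma~\ref{v4-away}) applies with a fixed constant, and no Floquet analysis over long times is needed at all. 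Note also that the period map only has to be tracked to $O(|v|^3)$, not $O(\nu^4)$ as you suggest; the $O(|v|^3)$ remainder, multiplied by $n/2$, is exactly what produces the dominant $O(t^{3/2}/n^{1/2})$ error.
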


\subsection{Asymptotic universal focal decomposition}

The asymptotic trajectories $X_\ell(v;t)$ induce an asymptotic focal decomposition of the cylinder ${\mathcal C} = \R_0^+\times \left[-1,1\right]$ 
by the sets $\sigma_i$ whose elements are pairs $(t,x)\in {\mathcal C}$ such that $X_\ell(v;t)=x$ has exactly $i$ solutions $v(t,x)\in[-1,1]$, each distinct solution corresponding to an asymptotic trajectory connecting the points $(0,0)\in {\mathcal C}$ and $(t,x)\in {\mathcal C}$. Therefore, for each $i\in\{0,1,...,\infty\}$, the set $\sigma_i\subset{\mathcal C}$ contains all points in $(t,x) \in \mathcal C$ such that there exist exactly $i$ asymptotic trajectories connecting $(0,0)\in {\mathcal C}$ and $(t,x)\in {\mathcal C}$. The following result is a consequence of theorem \ref{corolario_traj_q2_intro}.
\begin{theorem} 
There exists an asymptotic universal focal decomposition for the Euler--Lagrange equation \eqref{EL} induced by the asymptotic trajectories $X_\ell(v;t)$.
\end{theorem}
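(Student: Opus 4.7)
The statement amounts to two claims: (i) the family $\{\sigma_i\}_{i\in\{0,1,\ldots,\infty\}}$ is a well-defined partition of the cylinder $\mathcal{C}=\R_0^+\times[-1,1]$, and (ii) this partition is universal, depending only on the sign $\ell$ of $\mathcal V^{(4)}(q^*)$, and moreover arises as the asymptotic limit of the renormalized focal decompositions. Universality will be immediate from the closed form $X_\ell(v;t)=v\sin(\ell t(v^2-1))$, which contains no data from $\mathcal V$ other than $\ell$. I would therefore concentrate on the well-definedness of the sets, and on identifying this decomposition as the genuine asymptotic limit of the renormalized focal decompositions, the latter being where Theorem \ref{corolario_traj_q2_intro} enters.

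\textbf{Step 1: partition.} For each fixed $t\ge 0$ the map $v\mapsto X_\ell(v;t)$ is real analytic on the compact interval $[-1,1]$, with derivative
\[
\partial_v X_\ell(v;t)=\sin\!\bigl(\ell t(v^2-1)\bigr)+2\ell t v^2\cos\!\bigl(\ell t(v^2-1)\bigr),
\]
which vanishes only on a discrete set of $v$'s. Hence for every $(t,x)\in\mathcal{C}$ the fibre $\{v\in[-1,1]:X_\ell(v;t)=x\}$ is either finite or, in degenerate cases, empty; this assigns a well-defined index $i(t,x)\in\N\cup\{\infty\}$ to each point and therefore produces the required partition $\mathcal{C}=\bigsqcup_i\sigma_i$.

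\textbf{Step 2: universality.} The closed form of $X_\ell$ makes manifest that the level sets $\sigma_i$ depend solely on $\ell\in\{-1,+1\}$ and not on $m$, $\omega$, $\mu$ or on any higher-order Taylor coefficient of $\mathcal V$ at $q^*$. Consequently the decomposition $\mathcal{C}=\bigsqcup_i\sigma_i$ is the same for every potential $\mathcal V$ in the class considered; there are at most two such decompositions, selected by $\mathrm{sgn}\,\mathcal V^{(4)}(q^*)$. (In fact the symmetry $X_{-1}(v;t)=X_{+1}(v;-t)$, together with the reflection $v\mapsto -v$, shows the two cases are equivalent up to an orientation reversal.)

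\textbf{Step 3: asymptotic character.} To justify the adjective \emph{asymptotic} one has to see these $\sigma_i$ as the limit, in a suitable sense, of the focal decompositions defined by the $n$-renormalized trajectories $x_n(\,\cdot\,;t)$. By Theorem \ref{corolario_traj_q2_intro}, on the cylinder $[-1,1]\times[0,\beta n^{1/3-\epsilon}]$ one has $\|x_n-X_\ell\|_{C^0}=O(n^{-3\epsilon/2})\to 0$. At any $(t,x)\in\mathcal{C}$ for which $v\mapsto X_\ell(v;t)$ meets the horizontal level $x$ transversally, the implicit function theorem combined with this uniform convergence shows that for all sufficiently large $n$ the equation $x_n(v;t)=x$ has exactly $i(t,x)$ solutions, with each solution lying $O(n^{-3\epsilon/2})$-close to the corresponding root of $X_\ell(v;t)=x$. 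This is the sense in which the partition $\bigsqcup_i\sigma_i$ is the asymptotic focal decomposition of the Euler--Lagrange system \eqref{EL}.

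\textbf{Main obstacle.} The delicate part is not the existence of the partition itself but the control of the indices at \emph{non-transversal} points, i.e.\ at $(t,x)$ for which the fibre meets a critical value of $v\mapsto X_\ell(v;t)$: there the index can jump under a small $C^0$ perturbation, so one cannot naively promote $C^0$-convergence of trajectories to set-theoretic convergence of level sets. Fortunately the statement only asserts the existence of the asymptotic universal focal decomposition induced by the $X_\ell$, which is handled by Steps 1--2; the transversal part of Step 3 then suffices to identify it as the limit of the finite-$n$ focal decompositions, which is all the theorem requires.
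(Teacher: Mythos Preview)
The paper does not supply a formal proof of this theorem; it states the result as a consequence of Theorem~\ref{corolario_traj_q2_intro} and then simply describes the resulting decomposition (Figure~\ref{UFD_fig}). Your Steps~1--2 make explicit what the paper leaves implicit, and are in the same spirit. One correction to Step~1: at $t=0$ the map $v\mapsto X_\ell(v;0)$ is identically zero, so the fibre over $(0,0)$ is all of $[-1,1]$; your analyticity/discrete-zeros argument fails there and the index is $\infty$, not finite. This is precisely the point $\sigma_\infty$ in the paper's Figure~\ref{UFD_fig}, so you should treat $t=0$ separately rather than claim the fibres are always ``finite or empty''.

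Your Step~3 goes beyond what the paper asserts or proves. In the introduction the authors explicitly list ``a prove [sic] of the convergence of renormalized focal decompositions to the universal asymptotic focal decomposition'' as a \emph{future} step of the programme. In the present theorem the adjective ``asymptotic'' is a label inherited from the asymptotic trajectories $X_\ell$, not a claim that the finite-$n$ focal decompositions converge set-wise to $\{\sigma_i\}$. Your transversality sketch is a reasonable heuristic for that future result, but as written it has a gap: Theorem~\ref{corolario_traj_q2_intro} gives only $C^0$ convergence of $x_n$ to $X_\ell$, and $C^0$ closeness does not prevent a single transversal root of $X_\ell(\cdot;t)-x$ from splitting into several roots of $x_n(\cdot;t)-x$. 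An implicit-function-theorem argument of the kind you invoke would require $C^1$ control, which the paper does not provide.
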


The asymptotic universal focal decomposition is shown in Figure \ref{UFD_fig}. As in the case of the focal decomposition with base point $(0, 0)$ of the pendulum equation $\ddot{x}+\sin x=0$ (see Figure \ref{UFD_pend_fig}), the asymptotic universal focal decomposition also exhibits non-empty sets $\sigma_i$ with all finite indices. However, contrary to that focal decomposition, which gives a stratification for the whole $\R^2$, our asymptotic universal focal decomposition gives only a stratification of the half cylinder ${\mathcal C} = \R_0^+\times \left[-1,1\right]$. There are two main reasons for this to happen which we pass to explain. Firstly, our renormalization scheme acts only on periodic orbits, neglecting the high-energy non-periodic orbits, which restrains $X_\ell(v;t)$ to the interval $\left[-1,1\right]$. Secondly, we have defined the renormalization operator only for positive times. Noticing that the mechanical systems we renormalize have time-reversal symmetry, one can extend the asymptotic universal focal decomposition to the cylinder $\R\times \left[-1,1\right]$ by a symmetry on the $x$ axis.

\begin{figure}[h!]
	\centering
      \psfrag{t}[cc][][0.85][0]{$t$}%
      \psfrag{x}[cc][][0.85][0]{$x$}
      \psfrag{p1}[cc][][0.85][0]{$\pi$}  
      \psfrag{p2}[cc][][0.85][0]{$2\pi$}  
      \psfrag{p3}[cc][][0.85][0]{$3\pi$}
      \psfrag{si}[cc][][0.85][0]{$\sigma_\infty$}  
      \psfrag{s0}[cc][][0.85][0]{$\sigma_0$}
      \psfrag{s1}[cc][][0.85][0]{$\sigma_1$}  
      \psfrag{s2}[cc][][0.85][0]{$\sigma_2$}
      \psfrag{s3}[cc][][0.85][0]{$\sigma_3$}
      \psfrag{s4}[cc][][0.85][0]{$\sigma_4$}  
      \psfrag{s5}[cc][][0.85][0]{$\sigma_5$}
      \psfrag{s6}[cc][][0.85][0]{$\sigma_6$}
      \psfrag{s7}[cc][][0.85][0]{$\sigma_7$}  
      \psfrag{s8}[cc][][0.85][0]{$\sigma_8$}
      \psfrag{s9}[cc][][0.85][0]{$\sigma_9$}
      \psfrag{x=1}[cc][][0.85][0]{$x=1$}
      \psfrag{x=-1}[cc][][0.85][0]{$x=-1$}
   		\includegraphics[width=85 mm,angle=-90]{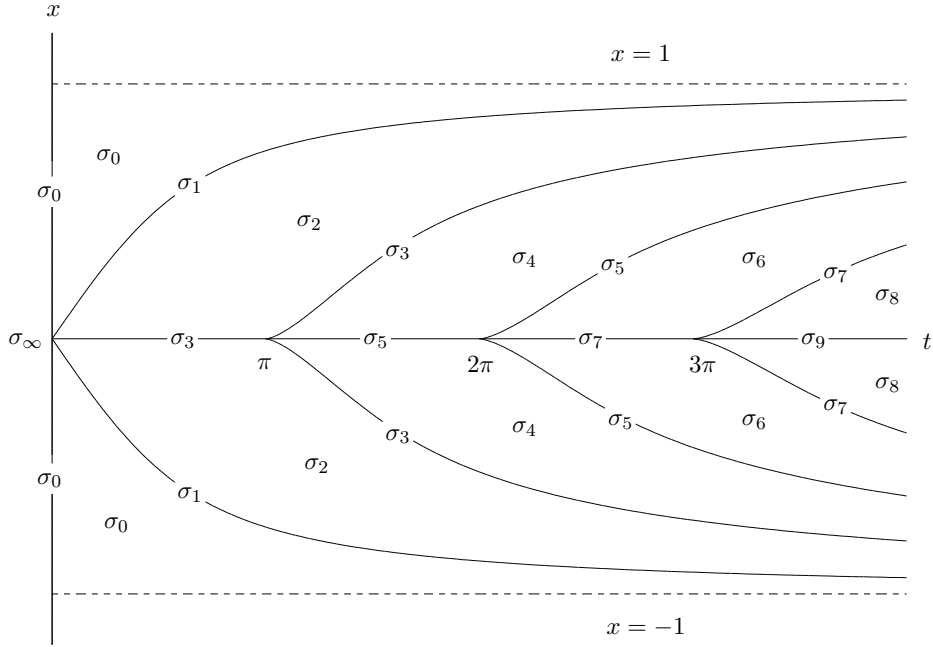}
   		\caption{The asymptotic universal focal decomposition.}
   		\label{UFD_fig}
\end{figure}

Some comments about Figure \ref{UFD_fig} are appropriate here.

For every $k\in\N$, the set $\sigma_{2k}$ is a 2-dimensional open set with two connected components. The the odd-indexed sets $\sigma_{2k-1}$ are the union of two open arcs, asymptotic to one of the lines $x=\pm 1$ and incident to the cusp-point $((k-1)\pi, 0)$, and a line segment joining the cusp points $((k-1)\pi,0)$ and $(k\pi,0)$; the lines $x=\pm 1$ are part of $\sigma_0$.

Thus, the even-indexed sets $\sigma_{2k}$ are 2-dimensional manifolds, while the odd-indexed sets $\sigma_{2k-1}$ are not manifolds because they contain the cusp-points $(k\pi,0)$.

On the other hand, if we decompose the odd-indexed sets $\sigma_{2k-1}$ into a cusp-point plus three 1-dimensional manifolds (two open arcs and one line segment), then we get a decomposition of the whole plane into a collection of disjoint connected manifolds. The above decomposition of ${\mathcal C}$ is an example of what is called a stratification of ${\mathcal C}$, the strata being the disjoint connected manifolds into which ${\mathcal C}$ was decomposed. Hence $\sigma_1$ consists of two 1-dimensional strata, $\sigma_2$ consists of two 2-dimensional strata, $\sigma_3$ consists of three 1-dimensional strata and one 0-dimensional strata and so on. To complete the picture, $\sigma_0$ consists of two 2-dimensional strata plus the $x$-axis minus the origin which belongs to $\sigma_\infty$.

\section{The renormalization procedure}\label{Ren_sect}

In this section we introduce an affine change of coordinates of space and time that enables us to map trajectories of the Lagrangian system \eqref{L_P} to trajectories of a dimensionless Lagrangian system, simplifying the study of the asymptotic properties of such trajectories.

\subsection{An affine change of coordinates and the perturbed quartic potentials}
In order to simplify the proofs throughout the paper, and without loss of generality, we apply a change of coordinates of space and time to the Lagrangian \eqref{L_P} and corresponding Euler-Lagrange equation \eqref{EL}. The new dimensionless coordinates $(x,t)$ are defined by
\begin{equation}\label{CC_P}
x = \mu^{-1}\left(q-q^*\right) \ , \qquad t=\omega\tau \ ,
\end{equation}
where $\omega$ and $\mu$ are as given in \eqref{omega_mu}.
We obtain a normalized Lagrangian function
\begin{equation}\label{NL_P}
L(x,\overset{.}{x})=\frac{1}{2}\overset{.}{x}^{2}-V(x)  \ ,
\end{equation}
with potential function $V(x)$ of the form
\begin{equation}\label{NV_P}
V(x)= V(0) + \frac{1}{2}x^2 + \frac{\ell}{4}x^4 + f(x)  \ ,
\end{equation}
where
\begin{itemize}
\item[a)] $f(x) \in O\left(|x|^5\right)$ is a $C^{\kappa}$ map ($\kappa \ge 5$);
\item[b)] $\ell = 1$ if ${\mathcal V}^{(4)}(q^*)>0$ and $\ell = -1$ if ${\mathcal V}^{(4)}(q^*)<0$.
\end{itemize}
Since adding a constant to the potential does not change the form of the associated Euler--Lagrange equation, for simplicity of notation and without loss of generality we rescale the total energy associated with the Lagrangian system \eqref{NL_P} so that that $V(0)=0$. In the next definition, we introduce the quartic potentials and the perturbed quartic potentials

\begin{definition}
The \emph{quartic potentials} $V_{\ell}:\R \to \R$ are defined by
\begin{equation}\label{NPV}
V_{\ell}(x)= \frac{1}{2}x^2 + \frac{\ell}{4}x^4 \ , \qquad \ell \in \{-1,1\} \ . 
\end{equation}
The $C^5$ \emph{perturbed quartic potentials} $V:\R \to \R$ are defined by
\begin{equation}\label{NV}
V(x)= \frac{1}{2}x^2 + \frac{\ell}{4}x^4 + f(x)  \ ,
\end{equation}
where $\ell\in \{-1,1\}$ and $|f^{(i)}(x)| \in O\left(|x|^{5-i}\right)$ for $i\in\{0,\ldots,5\}$.
\end{definition}

\begin{figure}[h!]
	\centering
			\psfrag{0}[cc][][0.85][0]{$0$}
      \psfrag{1}[ct][][0.85][0]{$1$}
      \psfrag{-1}[ct][][0.85][0]{$-1$}  
      \psfrag{x}[cc][][0.85][0]{$x$}    
      \psfrag{V1x}[lc][][0.85][0]{$V_1(x)$}
      \psfrag{V-1x}[lc][][0.85][0]{$V_{-1}(x)$}
      \psfrag{0.2}[rc][][0.85][0]{$0.2$}
   		\subfloat[][]{\includegraphics[width=0.31\textwidth,angle=-90]{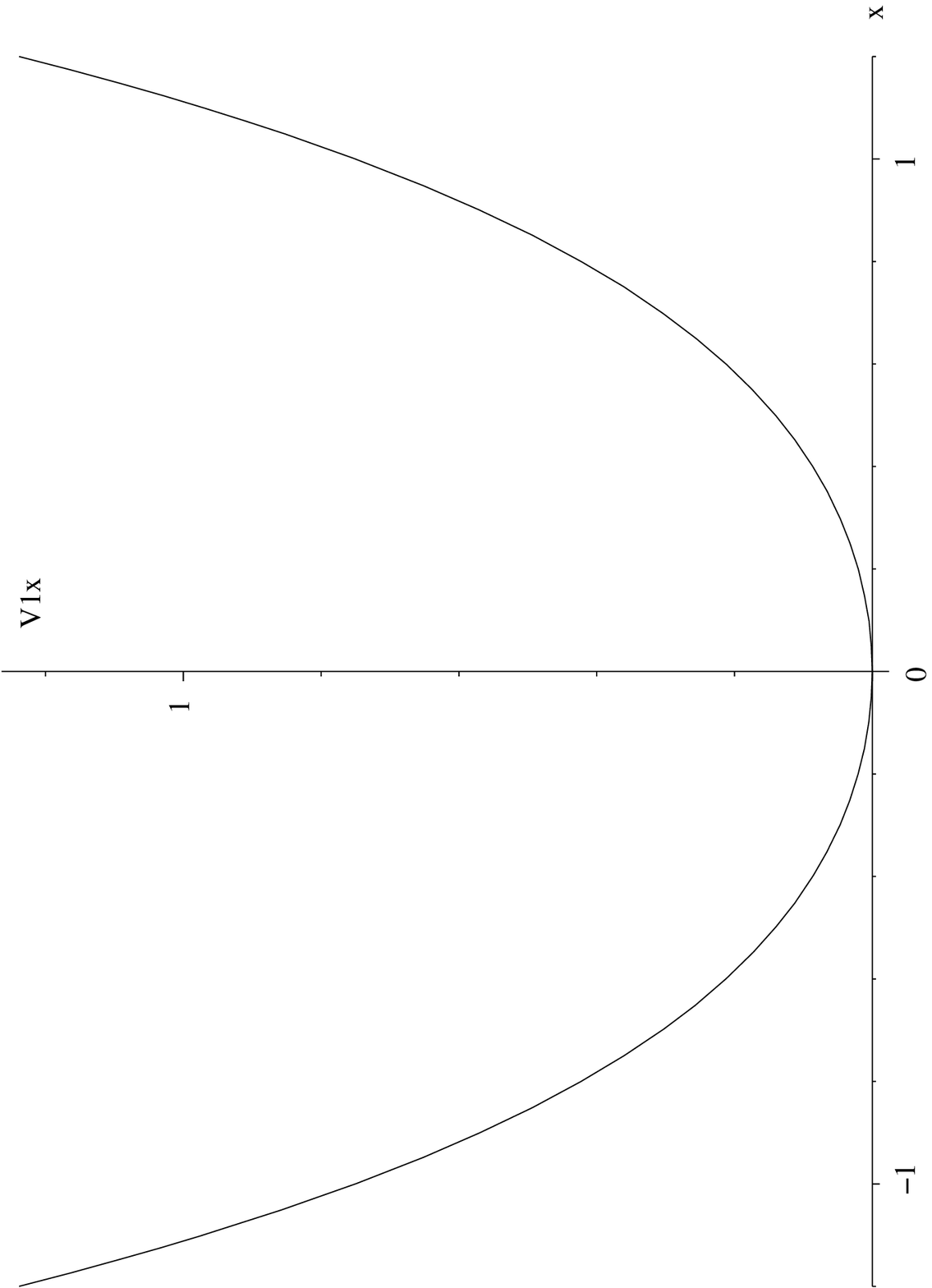}\label{V_pos_img}}\qquad \qquad
   		\subfloat[][]{\includegraphics[width=0.31\textwidth,angle=-90]{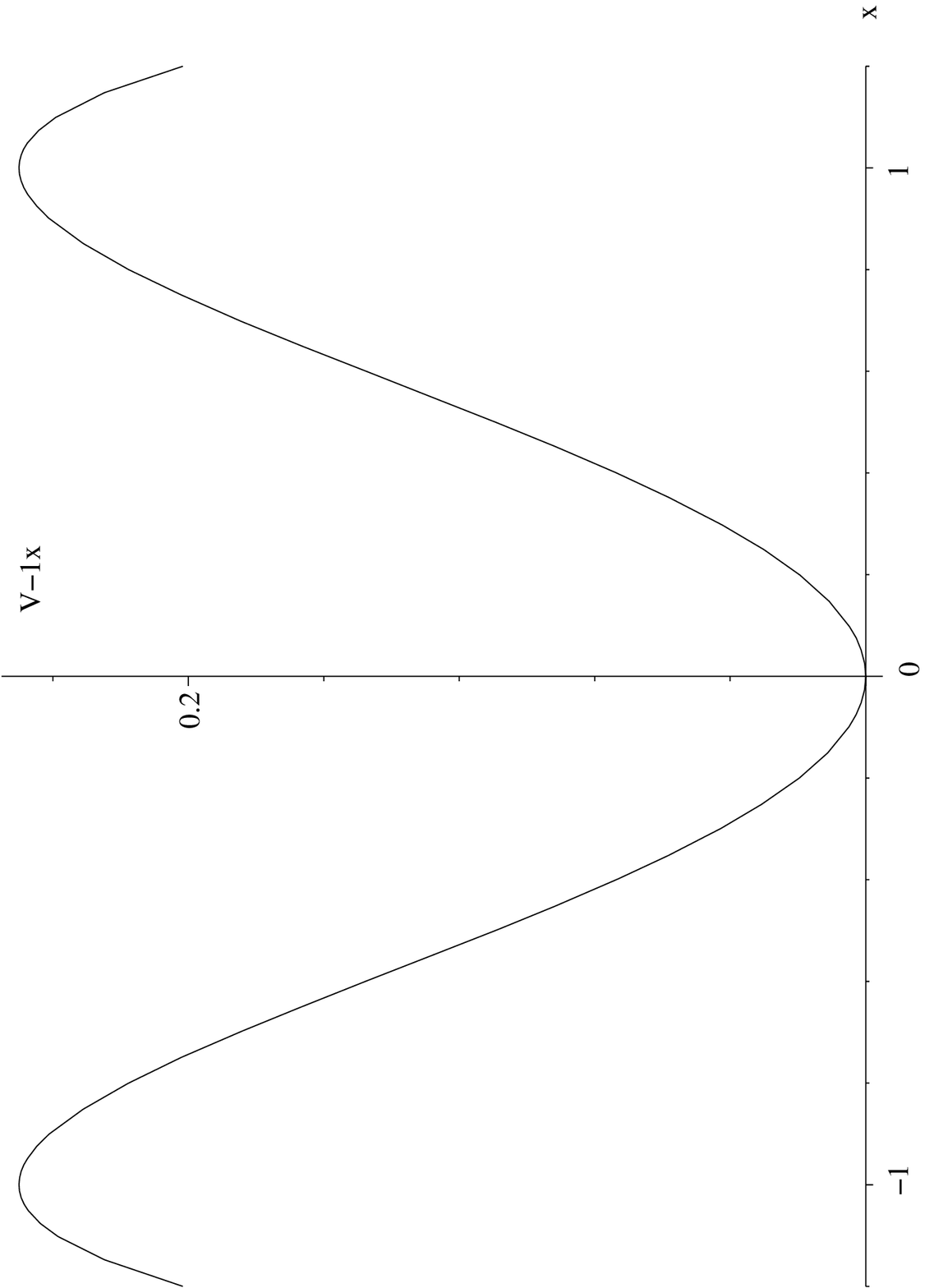}\label{V_neg_img}}
\caption{The quartic potentials $V_\ell(x)$.}
\end{figure}

In the new coordinates $(x,t)$, the Euler-Lagrange equation associated with the normalized Lagrangian \eqref{NL_P} reduces to
\begin{equation}\label{NEL_P}
\overset{..}x = -(x + \ell x^3 + f'(x))  \ ,
\end{equation}
where the dot denotes differentiation with respect to the variable $t$.
The next Lemma describes the relation between the solutions associated with the Lagrangian \eqref{L_P} and the normalized Lagrangian \eqref{NL_P}. The result follows easily from the change of coordinates \eqref{CC_P}. 
\begin{lemma} \label{rel_qx}
The solution $q(\nu;\tau)$ of Euler-Lagrange equation \eqref{EL} with initial conditions $q(\nu;0)=q^*$ and
$\frac{\rmd q}{\rmd \tau}(\nu;0)=\nu$ is related to the solution $x(v;t)$ of the Euler--Lagrange equation
\eqref{NEL_P} with initial conditions $x(v;0)=0$ and $\overset{.}{x}(v;0)=v$ by
\begin{equation*}
q(\nu;\tau) =\mu x\left(\mu^{-1}\omega^{-1}\nu;\omega\tau\right) + q^* \ ,
\end{equation*}
where $\omega$ and $\mu$ are as given in \eqref{omega_mu}.
\end{lemma}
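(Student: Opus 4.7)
The plan is to verify the identity by a direct change-of-variables computation on the Euler--Lagrange equation, followed by an appeal to uniqueness of solutions of the Cauchy problem. This lemma is essentially a consistency check that the affine rescaling \eqref{CC_P} intertwines \eqref{EL} with \eqref{NEL_P} in the expected way, so the substance of the argument lies in carefully tracking the chain rule and the prescribed values of $\omega$ and $\mu$.

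First, I would define the candidate solution
\begin{equation*}
\tilde x(t) := \mu^{-1}\bigl(q(\nu;\tau) - q^*\bigr), \qquad t = \omega\tau,
\end{equation*}
so that by construction $\tilde x(0) = 0$, and by the chain rule
\begin{equation*}
\dot{\tilde x}(t) = \mu^{-1}\omega^{-1}\frac{\rmd q}{\rmd \tau}(\nu;\tau), \qquad \ddot{\tilde x}(t) = \mu^{-1}\omega^{-2}\frac{\rmd^2 q}{\rmd \tau^2}(\nu;\tau).
\end{equation*}
In particular $\dot{\tilde x}(0) = \mu^{-1}\omega^{-1}\nu$, matching the initial velocity prescribed for $x(v;t)$ at $v = \mu^{-1}\omega^{-1}\nu$.

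Second, I would substitute into \eqref{EL}: writing $q = q^* + \mu\tilde x$ and using the Taylor expansion of $\mathcal V$ around $q^*$ (which, since $q^*$ is an elliptic equilibrium with the stated expansion, has vanishing first and third derivatives), one gets
\begin{equation*}
m\mu\omega^2 \ddot{\tilde x} = -\mathcal V'(q^*+\mu\tilde x) = -\mathcal V''(q^*)\mu\tilde x - \frac{\mathcal V^{(4)}(q^*)}{6}\mu^3\tilde x^3 - g(\tilde x),
\end{equation*}
where $g$ absorbs the higher-order terms. Dividing by $m\mu\omega^2 = \mu\mathcal V''(q^*)$ (using the definition of $\omega$) and invoking $\mu^2\mathcal V^{(4)}(q^*)/(6\mathcal V''(q^*)) = \ell$ (from the definition of $\mu$ and the sign convention for $\ell$), this reduces to $\ddot{\tilde x} = -(\tilde x + \ell \tilde x^3 + f'(\tilde x))$, which is precisely \eqref{NEL_P}. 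The remainder $f$ inherits its size from the $O(|q-q^*|^5)$ remainder in the expansion of $\mathcal V$, which rescales appropriately under $q-q^* = \mu x$.

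Finally, $\tilde x$ solves \eqref{NEL_P} with initial data $\tilde x(0)=0$, $\dot{\tilde x}(0)=\mu^{-1}\omega^{-1}\nu$; by Picard--Lindelöf applied to \eqref{NEL_P} (the right-hand side is $C^{\kappa-1}$ with $\kappa\geq 5$, hence locally Lipschitz), we conclude $\tilde x(t) = x(\mu^{-1}\omega^{-1}\nu;t)$, and unwinding the definition of $\tilde x$ yields the stated formula. The only potential obstacle is bookkeeping the numerical factors so that the coefficients of the $x$ and $x^3$ terms in the reduced equation come out to exactly $1$ and $\ell$; this is forced by the specific choices of $\omega$ and $\mu$ in \eqref{omega_mu} and poses no conceptual difficulty.
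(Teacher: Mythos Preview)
Your argument is correct and is precisely the direct verification the paper has in mind: the paper does not spell out a proof but simply remarks that the result follows from the change of coordinates \eqref{CC_P}, which is exactly what you carry out via the chain rule and uniqueness for the Cauchy problem.
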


\subsection{The renormalization procedure}

Since $x=0$ is a local minimum of the potential $V(x)$ given by \eqref{NV_P} there is $v_{Max}>0$ such that for all $|v|<v_{Max}$ the solutions $x(v;t)$ of the Euler--Lagrange equation \eqref{NEL_P} with initial conditions $x(v;0)=0$ and $\dot{x}(v;0)=v$ are periodic. Furthermore, there exist $\alpha>0$ small enough and $N \ge 1$ large enough such that, for every $n \ge N$, the \emph{$n$-renormalized trajectories} $x_{n}:\left[-1,1\right]\times[0,\alpha n]\rightarrow\R$ are well-defined by 
\begin{equation*}
x_{n}(v;t)=(-1)^{n}~\Gamma_{n,t}^{-1}~x\left ( \Gamma_{n,t}~v;n\pi -\ell t\right ) \ ,
\end{equation*}
where $\Gamma_{n,t}$ is the $(n,t)$-scaling parameter defined in \eqref{gamma}. Using Lemma \ref{rel_qx}, we obtain that Theorem \ref{corolario_traj_q2_intro} follows as a consequence of the following result.

\begin{proposition}\label{main}
There exist $K>0$ and $N \ge 1$ such that for every $n \ge N$ and every $t\in\left[0,Kn\right]$, the $n$-renormalized trajectories ${x}_{n}(v;t)$ converge to the asymptotic trajectories $X_\ell(v;t)$. Furthermore, the
following bound is satisfied
\begin{equation*}
\left |{x}_{n}(v;t)-X_{\ell}\left( v;t\right)\right | < O\left(\frac{|t|}{n}; \frac{|t|^{2}}{n}; \frac{|t|^{3/2}}{n^{1/2}}\right)  \\
\end{equation*}
for all $v \in [-1,1]$.
\end{proposition}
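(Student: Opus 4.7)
The plan is to reduce the asymptotic statement to a careful tracking of the phase of an almost-harmonic oscillation over a time interval of length $s = n\pi - \ell t$ of order $n$, where the "small initial velocity" $w = \Gamma_{n,t}v$ is of order $(t/n)^{1/2}$. Writing $x(v;t)$ for the solution of \eqref{NEL_P}, I expect the leading-order behaviour, coming from the results of sections \ref{QP} and \ref{Perturbed_Potentials}, to take the form
\begin{equation*}
x(w;s) = w\,\sin\!\bigl(\Omega(w)\,s\bigr) + R(w;s),
\end{equation*}
where $\Omega(w) = 2\pi/T(w)$ is the angular frequency and $R$ collects the anharmonic higher harmonics. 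By section \ref{period_map}, the period map has the expansion
\begin{equation*}
T(w) = 2\pi - \tfrac{3\pi\ell}{4}w^{2} + O(w^{3}),
\end{equation*}
so that $\Omega(w) = 1 + \tfrac{3\ell}{8}w^{2} + O(w^{3})$, where the $O(w^{3})$ remainder is produced by the perturbation $f$ (an $O(w^{4})$ remainder would arise in the purely quartic case).

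With this in hand, I would substitute $w = \Gamma_{n,t}v$ and $s = n\pi - \ell t$ and expand the phase. The key algebraic identity is $\Gamma_{n,t}^{2} = 8t/(3\pi n)$, which is precisely tuned so that
\begin{equation*}
\Omega(\Gamma_{n,t}v)\,(n\pi - \ell t) = n\pi + \ell t(v^{2}-1) + \mathcal{E}_{n,t}(v),
\end{equation*}
with remainder $\mathcal{E}_{n,t}(v)$ consisting of three distinct contributions: an $O(n \cdot \Gamma_{n,t}^{4}) = O(t^{2}/n)$ term arising from the $w^{4}$ correction to $\Omega$ over time $s \sim n$, an $O(n \cdot \Gamma_{n,t}^{3}) = O(t^{3/2}/n^{1/2})$ term produced by the perturbation $f$, and an $O(t/n)$ term coming from $\ell t \cdot O(w^{2})$. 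After applying $(-1)^{n}\Gamma_{n,t}^{-1}$ and using $\sin(n\pi + \varphi) = (-1)^{n}\sin\varphi$ together with $|\sin(A+\mathcal E) - \sin A| \le |\mathcal E|$, the main term collapses exactly to $v\sin(\ell t(v^{2}-1)) = X_{\ell}(v;t)$, while the anharmonic remainder $R$ contributes an additional $O(\Gamma_{n,t}^{-1}\cdot \Gamma_{n,t}^{3}v^{3}) = O(t/n)$ after the rescaling. Summing these error sources reproduces the claimed bound $O(|t|/n;\, |t|^{2}/n;\, |t|^{3/2}/n^{1/2})$.

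The architecture of the argument is therefore: (i) invoke sections \ref{QP} and \ref{Perturbed_Potentials} to write $x(w;s)$ in a normal form $w\sin(\Omega(w)s) + R$ with controlled remainders, valid uniformly for $|w|$ bounded by some $w_{\max}$; (ii) invoke the period expansion of section \ref{period_map}; (iii) perform the explicit phase expansion described above; (iv) choose $K > 0$ so that the admissible range $t \in [0,Kn]$ keeps $|\Gamma_{n,t}v| \le w_{\max}$ and keeps all the remainders in the safe regime where the implicit-function estimates of section \ref{Perturbed_Potentials} apply. The main obstacle, I expect, is not the leading-order algebra, which is dictated by the resonance $\Gamma_{n,t}^{2} = 8t/(3\pi n)$, but rather the uniform control of the anharmonic remainder $R(w;s)$ and of the $O(w^{3})$ perturbative correction in $\Omega$ over the full long time interval $s = O(n)$: the secular accumulation of small-amplitude errors over $n$ half-periods must be kept strictly below the principal terms, which is exactly what forces the restriction to $t = O(n)$ and the $1/2$-power loss in the $f$-induced error. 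Once those uniform estimates are in place, Theorem \ref{corolario_traj_q2_intro} follows from Proposition \ref{main} via Lemma \ref{rel_qx} and the constraint $t \le \beta n^{1/3-\epsilon}$, which makes all three error terms $O(n^{-3\epsilon/2})$.
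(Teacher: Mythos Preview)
Your approach is essentially the same as the paper's: both hinge on the period expansion $T(w) = 2\pi - \tfrac{3\pi\ell}{4}w^{2} + O(|w|^{3})$ of Proposition~\ref{period-map1} and track the accumulated phase over the long time $s = n\pi - \ell t$. The paper organizes this around $T$ rather than $\Omega = 2\pi/T$: it writes $n\pi - \ell t = \tfrac{n}{2}T(\Gamma v) + R_{\ell,v,t} + O(t^{3/2}/n^{1/2})$ via Lemma~\ref{gamma_per}, then uses \emph{exact periodicity} of $x(\Gamma v;\,\cdot\,)$ to strip off $\lfloor n/2\rfloor$ full periods and reduce to a bounded time, where the finite-time estimate of Lemma~\ref{lemma333} applies. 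Your phase computation $\Omega(\Gamma v)(n\pi - \ell t) = n\pi + \ell t(v^{2}-1) + \mathcal{E}$ is the dual of this and gives the same arithmetic.

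Two points to tighten. First, your step~(i) cannot be justified by ``invoking sections~\ref{QP} and~\ref{Perturbed_Potentials}'' directly: Proposition~\ref{lemma3} only controls $x(w;s)$ on a \emph{fixed} finite interval $[0,T]$, with error terms like $O(|s||w|^{3})$ that blow up for $s = O(n)$. To get your normal form $w\sin(\Omega(w)s) + R(w;s)$ with $R = O(w^{3})$ uniformly in $s$, you must either invoke periodicity explicitly (as the paper does) or argue via the Fourier series of the periodic solution; either way this is the crux and deserves to be made explicit. Second, your bookkeeping of $\mathcal{E}$ has a slip: the cross-term ``$\ell t \cdot O(w^{2})$'' is $O(t \cdot t/n) = O(t^{2}/n)$, not $O(t/n)$. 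The genuine $O(t/n)$ contribution is the one you correctly identify later, namely $\Gamma^{-1}\cdot O(\Gamma^{3}) = O(\Gamma^{2})$ from the amplitude correction $A(w) = w + O(w^{3})$ and the anharmonic remainder $R$; in the paper this appears as the $O(\Gamma^{2})$ coming from $[\tfrac{n}{2}]T(\Gamma v) = 2\pi[\tfrac{n}{2}] + O(\Gamma^{2})$.
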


The rest of this paper is devoted to prepare the proof of Proposition \ref{main}, which will be completed in section \ref{trajectories}.


\section{Quartic potentials} \label{QP}
In this section we will make use of Jacobian elliptic functions (see \cite{Abramowitz_Stegun,Bowman,Whittaker_Watson}) to compute explicit solutions, with suitable initial conditions, for trajectories of a given class of Lagrangian systems. Furthermore, we will compute the first terms of the Taylor expansion of such solutions in terms of their initial velocities. In the next section, we will use these Taylor expansions to study the trajectories associated to the more general potentials described in section \ref{Ren_sect}.

We will consider Lagrangian functions of the form
\begin{equation*}
L_\ell(x,\dot{x})=\frac{1}{2}\dot{x}^2 - V_\ell(x) \ , 
\end{equation*}
where $V_\ell(x)$ is a quartic potential introduced in \eqref{NPV}, and compute explicit solutions of the corresponding Euler--Lagrange equation
\begin{eqnarray}\label{NPEL}
&&\overset{..}{x}= - x - \ell x^3 \nonumber\\
&&x(0)=0 \\
&&\dot{x}(0)=v \nonumber \ .        
\end{eqnarray}
We denote such solutions by $x_\ell(v;t)$, where $\ell\in\{-1,1\}$.

Recall that $x=0$ is a local minimum of $V_\ell(x)$. Thus, there exists $v_{Max}>0$ such that for all initial velocities $v$ satisfying $|v|<v_{Max}$, the solutions of \eqref{NPEL} are periodic.

Let $\ell\in\{-1,1\}$ and let $v>0$ be small enough so that the solutions of \eqref{NPEL} are periodic. We define the \emph{$(\ell,v)$--coefficients} $a_\ell$ and $b_\ell$ by
\begin{eqnarray}\label{a_b}
a_\ell:=a_\ell(v) &=& \left(1+\left(1+2\ell v^2\right)^{1/2}\right)^{1/2} \nonumber \\
b_\ell:=b_\ell(v) &=& \left(-\ell+\ell\left(1+2\ell v^2\right)^{1/2}\right)^{1/2} \ .
\end{eqnarray}

\begin{lemma}\label{exact-solutions_1} 
Let $\ell=1$. The solution $x_1(v;t)$ of \eqref{NPEL} is given by
\begin{equation*}
x_1(v;t) = A_1\sd\left(\lambda_1 t ; m_1\right) \ ,
\end{equation*}
where:
\begin{itemize}
\item[(i)] $\sd(t ; m)$ is a Jacobian elliptic function.
\item[(ii)] the amplitude $A_1$, the frequency $\lambda_1$, and the parameter $m_1$ are given by
\begin{eqnarray}\label{coef_1}
A_1:=A_1(v)&=& \sign(v)a_1b_1/\left({a_1}^2+{b_1}^2\right)^{1/2} \nonumber \\
\lambda_1:=\lambda_1(v) &=& \left(({a_1}^2+{b_1}^2)/2\right)^{1/2} \\
m_1:=m_1(v) &=& {b_1}^2/\left({a_1}^2+{b_1}^2\right) \nonumber \ .
\end{eqnarray}
\item[(iii)] $a_1$ and $b_1$ are as given in \eqref{a_b}.
\end{itemize}
\end{lemma}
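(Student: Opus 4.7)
My plan is a direct verification via energy conservation together with the standard differential identity satisfied by $\sd$. First I would exploit the fact that \eqref{NPEL} with $\ell=1$ is conservative: multiplying by $\dot{x}$ and integrating, and using $x(0)=0$, $\dot{x}(0)=v$, yields
\begin{equation*}
\dot{x}^{2} \;=\; v^{2} - x^{2} - \tfrac{1}{2}x^{4} \;=\; \tfrac{1}{2}\bigl(b_{1}^{2}-x^{2}\bigr)\bigl(a_{1}^{2}+x^{2}\bigr),
\end{equation*}
where the last equality is obtained by solving the quadratic $y^{2}+2y-2v^{2}=0$ for $y=x^{2}$; its two roots are exactly $b_{1}^{2}$ and $-a_{1}^{2}$ by \eqref{a_b}. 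This identifies $\pm b_{1}$ as the turning points and $a_{1}^{2}$ as the complementary scale that will govern the elliptic parameter.

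Next I would compute the ODE satisfied by $y(u)=\sd(u;m)=\sn(u;m)/\dn(u;m)$ from the standard Jacobi identities $\sn^{2}+\cn^{2}=1$ and $\dn^{2}+m\sn^{2}=1$, together with $(\sn)'=\cn\dn$, $(\cn)'=-\sn\dn$, $(\dn)'=-m\sn\cn$. A short computation gives $y'=\cn/\dn^{2}$, and then
\begin{equation*}
(y')^{2} \;=\; \bigl(1-(1-m)y^{2}\bigr)\bigl(1+m y^{2}\bigr).
\end{equation*}
Consequently, the ansatz $x(t)=A\,\sd(\lambda t;m)$ satisfies
\begin{equation*}
\dot{x}^{2} \;=\; \frac{\lambda^{2}}{A^{2}}\bigl(A^{2}-(1-m)x^{2}\bigr)\bigl(A^{2}+m x^{2}\bigr).
\end{equation*}

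I would then match this factored expression with $\tfrac{1}{2}(b_{1}^{2}-x^{2})(a_{1}^{2}+x^{2})$. Equating the leading coefficients of $x^{2}$ in each factor forces $A^{2}=(1-m)b_{1}^{2}=m a_{1}^{2}$, from which
\begin{equation*}
m \;=\; \frac{b_{1}^{2}}{a_{1}^{2}+b_{1}^{2}} \;=\; m_{1}, \qquad A^{2} \;=\; \frac{a_{1}^{2}b_{1}^{2}}{a_{1}^{2}+b_{1}^{2}},
\end{equation*}
and matching the overall constant gives $\lambda^{2}=A^{2}/(2m(1-m))=(a_{1}^{2}+b_{1}^{2})/2=\lambda_{1}^{2}$, as claimed.

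Finally I would verify the initial conditions. The identity $\sd(0;m)=0$ gives $x_{1}(v;0)=0$, and $y'(0)=\cn(0)/\dn^{2}(0)=1$ gives $\dot{x}_{1}(v;0)=A\lambda$. Using $a_{1}^{2}b_{1}^{2}=\bigl(1+\sqrt{1+2v^{2}}\bigr)\bigl(-1+\sqrt{1+2v^{2}}\bigr)=2v^{2}$, one checks that $A\lambda=\sign(v)\,a_{1}b_{1}/\sqrt{2}=v$, which fixes the sign of $A$ as $\sign(v)$ and completes the verification. There is no real obstacle here beyond keeping the algebra in order; the only subtlety is choosing $v_{\mathrm{Max}}$ small enough so that $1+2v^{2}>0$ (automatic for $\ell=1$) and so that $\sign(v)a_{1}b_{1}$ reproduces $v$ with the correct sign, which the factorization above makes transparent.
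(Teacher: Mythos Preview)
Your proof is correct. Both you and the paper start from the same first-order energy equation $\dot{x}^{2}=v^{2}-x^{2}-\tfrac{1}{2}x^{4}=\tfrac{1}{2}(b_{1}^{2}-x^{2})(a_{1}^{2}+x^{2})$, but the final identification differs: the paper separates variables, writes the solution implicitly as the elliptic integral $\int_{0}^{x}(\tfrac{1}{2}v^{2}-V_{1}(z))^{-1/2}\,\rmd z=\sign(v)\,t/\sqrt{2}$, and then invokes formula 17.4.51 of Abramowitz--Stegun to recognize this as $\lambda_{1}^{-1}\sd^{-1}(x/A_{1};m_{1})$. You instead derive the differential identity $(y')^{2}=(1-(1-m)y^{2})(1+my^{2})$ for $y=\sd(\cdot;m)$ from the standard Jacobi relations and match coefficients directly. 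Your route is more self-contained---no table lookup is needed, and the determination of $A_{1},\lambda_{1},m_{1}$ as well as the initial-condition check $\dot{x}_{1}(v;0)=A_{1}\lambda_{1}=v$ are fully explicit---while the paper's argument is terser but relies on the cited reference for the key identification.
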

\begin{proof}
Noting that the solution of \eqref{NPEL} is also the solution of the first order differential equation 
\begin{equation*}
\frac{1}{2}\dot{x}^2 + V_1(x)= \frac{1}{2}v^2 
\end{equation*}
with initial condition $x_1(0)=0$, integrating the above equation we obtain that the solution $x_1(v;t)$ is implicitly determined by 
\begin{equation*}
\int_{0}^x \left(\frac{1}{2}v^2- V_1(z)\right)^{-1/2} \rmd z= \sign(v) t/\sqrt{2} \ .
\end{equation*}
Using formula 17.4.51 from \cite{Abramowitz_Stegun}, we get
\begin{equation*}
\frac{1}{\lambda_1}\left({\sd}^{-1}\left(x_1/A_1 ; m_1\right)\right)= \sign(v) t/\sqrt{2} \ ,
\end{equation*}
where $A_1$, $\lambda_1$ and $m_1$ are defined in \eqref{coef_1} in terms of $a_1$ and $b_1$ given in \eqref{a_b}. The lemma follows by solving the above equality with respect to $x_1$ to obtain $x_1(v;t)$.
\end{proof}

The proof of the next lemma is analogous to the proof of lemma \ref{exact-solutions_1} (using formula 17.4.45 from \cite{Abramowitz_Stegun}). We skip its proof.
\begin{lemma}\label{exact-solutions_m1} 
Let $\ell=-1$. The solution $x_{-1}(v;t)$ of \eqref{NPEL} with $|v|<v_{Max}$ is given by
\begin{equation*}
x_{-1}(v;t) = A_{-1}\sn\left(\lambda_{-1} t ; m_{-1}\right) \ ,
\end{equation*}
where:
\begin{itemize}
\item[(i)] $\sn(t ; m)$ is the Jacobian elliptic sinus.
\item[(ii)] the amplitude $A_{-1}$, the frequency $\lambda_{-1}$ and the parameter $m_{-1}$ are given by
\begin{eqnarray*}
A_{-1}:=A_{-1}(v) &=& \sign(v) b_{-1} \nonumber \\
\lambda_{-1}:=\lambda_{-1}(v) &=& a_{-1}/\sqrt{2} \\
m_{-1}:=m_{-1}(v) &=& (b_{-1}/a_{-1})^2 \nonumber \ .
\end{eqnarray*}
\item[(iii)] $a_{-1}$ and $b_{-1}$ are as given in \eqref{a_b}.
\end{itemize}
\end{lemma}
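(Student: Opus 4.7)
The plan is to mirror the proof of Lemma \ref{exact-solutions_1} almost verbatim, replacing formula 17.4.51 of \cite{Abramowitz_Stegun} by formula 17.4.45, which is the analogue producing a Jacobian $\sn$ in place of $\sd$ in the regime where the energy polynomial factors in the way dictated by $\ell=-1$.

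Concretely, I would begin by using conservation of energy for the Lagrangian $L_{-1}$: writing $V_{-1}(x)=x^{2}/2-x^{4}/4$, any solution of \eqref{NPEL} with $x_{-1}(v;0)=0$ and $\dot{x}_{-1}(v;0)=v$ satisfies $\dot{x}^{2}=v^{2}-2V_{-1}(x)=\tfrac{1}{2}(x^{4}-2x^{2}+2v^{2})$. Setting $y=x^{2}$ turns the quartic in $x$ into a quadratic in $y$ whose roots are exactly $a_{-1}^{2}(v)$ and $b_{-1}^{2}(v)$ from \eqref{a_b} with $\ell=-1$, so
\begin{equation*}
\dot{x}^{2}=\tfrac{1}{2}\bigl(a_{-1}^{2}-x^{2}\bigr)\bigl(b_{-1}^{2}-x^{2}\bigr).
\end{equation*}
For $|v|<v_{Max}$ one has $0<b_{-1}<a_{-1}$ and the trajectory is confined to $|x|\le b_{-1}$, so both factors stay non-negative.

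Next I would separate variables on the initial quarter-period (where $\dot{x}$ keeps the sign of $v$) to obtain the quadrature
\begin{equation*}
\int_{0}^{x_{-1}(v;t)}\frac{\rmd z}{\sqrt{(a_{-1}^{2}-z^{2})(b_{-1}^{2}-z^{2})}}=\sign(v)\,\frac{t}{\sqrt{2}},
\end{equation*}
and then invoke formula 17.4.45 of \cite{Abramowitz_Stegun} to recognize the left-hand side as $a_{-1}^{-1}\sn^{-1}\!\bigl(x_{-1}(v;t)/b_{-1};\,m_{-1}\bigr)$ with $m_{-1}=(b_{-1}/a_{-1})^{2}$. Inverting and using that $\sn$ is odd in its first argument yields
\begin{equation*}
x_{-1}(v;t)=\sign(v)\,b_{-1}\,\sn\bigl(\lambda_{-1}t;\,m_{-1}\bigr)=A_{-1}\,\sn\bigl(\lambda_{-1}t;\,m_{-1}\bigr),\qquad \lambda_{-1}=\frac{a_{-1}}{\sqrt{2}},
\end{equation*}
which is the claimed formula. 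Uniqueness of the Cauchy problem \eqref{NPEL} then propagates the identity from the quarter-period to all of $\R$.

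The only point requiring a bit of care, and the step I would flag as the main (mild) obstacle, is verifying that formula 17.4.45 of \cite{Abramowitz_Stegun} applies in the precise regime delivered by the factorization above, namely $0<b_{-1}<a_{-1}$ with $b_{-1}\to 0$ as $v\to 0$; both inequalities are immediate from \eqref{a_b} since $1-2v^{2}>0$ for $|v|<v_{Max}\le 1/\sqrt{2}$. All remaining steps are the same kind of routine manipulations of Jacobian elliptic functions used in the proof of Lemma \ref{exact-solutions_1}, which is precisely why the authors declare the argument ``analogous'' and skip it.
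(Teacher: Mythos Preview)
Your proposal is correct and follows exactly the route the paper indicates: it mirrors the proof of Lemma~\ref{exact-solutions_1}, replacing the quadrature formula 17.4.51 by 17.4.45 of \cite{Abramowitz_Stegun}, which is precisely what the authors say one should do before skipping the proof. The factorization, the identification of $a_{-1}^2,b_{-1}^2$ as the roots, and the resulting values of $A_{-1},\lambda_{-1},m_{-1}$ all check out.
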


In the two next lemmas we provide estimates for the amplitude $A_{\ell}$, frequency $\lambda_{\ell}$ and parameter $m_{\ell}$. The estimates are obtained by finding the values of the successive derivatives of $A_{\ell}$, $\lambda_{\ell}$ and $m_{\ell}$ when $v=0$.
\begin{lemma}\label{lemma_est_1}
Let $\ell=1$. There exists $v_{Max}>0$ such that for all $v\in(-v_{Max},v_{max})$, we have that
\begin{align*}
A_1 & \in  v - \frac{1}{2}v^{3} \pm O\left(|v|^5\right) \\
\lambda_1 & \in 1 + \frac{1}{2}v^{2} \pm O\left(v^4\right) \\
m_1& \in \frac{1}{2}v^{2}  \pm O\left(v^4\right) \ .
\end{align*}
\end{lemma}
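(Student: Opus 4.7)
The plan is to eliminate $a_1,b_1$ from the formulas for $A_1,\lambda_1,m_1$ so that everything depends only on $(1+2v^2)^{1/2}$, and then to Taylor expand this single function. The key observation is that $a_1^2$ and $b_1^2$ are the two branches in the definition of $a_1,b_1$ in \eqref{a_b}, so
\begin{equation*}
a_1^{2}+b_1^{2}=2(1+2v^{2})^{1/2}, \qquad a_1^{2}b_1^{2}=\bigl(1+2v^{2}\bigr)-1 = 2v^{2}.
\end{equation*}
In particular, since $a_1,b_1\ge 0$, we have $a_1 b_1=\sqrt{2}\,|v|$.

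Substituting these identities into \eqref{coef_1} yields the closed-form expressions
\begin{equation*}
\lambda_1 = (1+2v^{2})^{1/4}, \qquad m_1 = \tfrac12 - \tfrac12 (1+2v^{2})^{-1/2}, \qquad A_1 = \frac{v}{(1+2v^{2})^{1/4}},
\end{equation*}
where in the formula for $A_1$ the factor $\mathop{\rm sign}(v)\,a_1b_1 = \mathop{\rm sign}(v)\sqrt{2}|v| = \sqrt{2}\,v$ removes the sign ambiguity.

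From here the estimates are a direct binomial expansion. Choose $v_{Max}>0$ so small that $2v^{2}<1$ on $(-v_{Max},v_{Max})$, which guarantees convergence of the series $(1+2v^{2})^{s}=\sum_{k\ge 0}\binom{s}{k}(2v^{2})^{k}$ for $s=\tfrac14,-\tfrac14,-\tfrac12$. Expanding through the appropriate order and using Taylor's theorem with remainder to control the tail gives
\begin{align*}
\lambda_1 &= 1 + \tfrac12 v^{2} \pm O(v^{4}),\\
m_1 &= \tfrac12\bigl(1 - (1 - v^{2} + \tfrac32 v^{4} \pm O(v^{6}))\bigr) = \tfrac12 v^{2} \pm O(v^{4}),\\
A_1 &= v\bigl(1 - \tfrac12 v^{2} \pm O(v^{4})\bigr) = v - \tfrac12 v^{3} \pm O(|v|^{5}).
\end{align*}

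There is essentially no conceptual obstacle: the hardest step is simply noticing the two algebraic identities for $a_1^{2}+b_1^{2}$ and $a_1^{2}b_1^{2}$, which collapses the four-fold composition of square roots in \eqref{a_b}--\eqref{coef_1} into a single real-analytic function of $v^{2}$. Once this is done, bounding the remainders of the $(1+2v^{2})^{s}$ expansions on a small symmetric neighbourhood of $0$ is routine.
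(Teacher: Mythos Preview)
Your proof is correct. The paper does not give a detailed argument here; it only remarks that the estimates follow by computing the successive derivatives of $A_1$, $\lambda_1$, $m_1$ at $v=0$. Your approach is the same in spirit (a Taylor expansion about $v=0$), but you add a worthwhile preliminary step: using $a_1^2+b_1^2=2(1+2v^2)^{1/2}$ and $a_1^2b_1^2=2v^2$ to collapse the nested radicals into the single analytic building block $(1+2v^2)^s$. This makes the expansion cleaner and the remainder control transparent, whereas differentiating the composite expressions in \eqref{a_b}--\eqref{coef_1} directly, as the paper suggests, is more laborious but leads to the same result.
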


\begin{lemma}\label{lemma_est_m1}
Let $\ell=-1$. There exists $v_{Max}>0$ such that for all $v\in(-v_{Max},v_{Max})$, we have that
\begin{align*}
A_{-1} & \in  v + \frac{1}{4}v^{3} \pm O\left(|v|^5\right) \\
\lambda_{-1} & \in 1 - \frac{1}{4} v^2 \pm O\left(v^4\right) \\
m_{-1} & \in \frac{1}{2}v^{2}  \pm O\left(v^4\right) \ .
\end{align*}
\end{lemma}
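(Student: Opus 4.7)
The proof is a direct computation via Taylor expansion. Everything is an explicit algebraic function of $v$ through the single quantity $(1 - 2v^2)^{1/2}$, so my plan is to expand this radical to the required order and then propagate the expansion through the two nested square roots that define $a_{-1}$ and $b_{-1}$.

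\medskip

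\noindent\textbf{Step 1: Expand the inner radical.} Using the binomial series,
\begin{equation*}
(1 - 2v^2)^{1/2} = 1 - v^2 - \tfrac{1}{2} v^4 + O(v^6),
\end{equation*}
valid for $|v|$ sufficiently small, say $|v| < v_{Max}$ with $v_{Max}^2 < 1/2$.

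\medskip

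\noindent\textbf{Step 2: Expand $a_{-1}$ and $b_{-1}$.} Substituting into the formulas \eqref{a_b} with $\ell = -1$ gives
\begin{equation*}
a_{-1}^2 = 2 - v^2 - \tfrac{1}{2} v^4 + O(v^6), \qquad b_{-1}^2 = v^2 + \tfrac{1}{2} v^4 + O(v^6).
\end{equation*}
Factoring and applying $(1 + x)^{1/2} = 1 + x/2 - x^2/8 + O(x^3)$ yields
\begin{equation*}
a_{-1} = \sqrt{2}\bigl(1 - \tfrac{1}{4} v^2 + O(v^4)\bigr), \qquad b_{-1} = |v|\bigl(1 + \tfrac{1}{4} v^2 + O(v^4)\bigr).
\end{equation*}

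\medskip

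\noindent\textbf{Step 3: Combine to obtain the three expansions.} From the expansion of $a_{-1}$ and the definition $\lambda_{-1} = a_{-1}/\sqrt{2}$, one immediately reads off the claim for $\lambda_{-1}$. From $A_{-1} = \sign(v) \, b_{-1}$, the $\sign(v)$ factor absorbs the $|v|$ into $v$, giving $A_{-1} = v + \tfrac{1}{4} v^3 + O(|v|^5)$. Finally,
\begin{equation*}
m_{-1} = \frac{b_{-1}^2}{a_{-1}^2} = \frac{v^2 + \tfrac{1}{2} v^4 + O(v^6)}{2 - v^2 + O(v^4)} = \frac{v^2}{2}\,\bigl(1 + \tfrac{1}{2} v^2\bigr)\bigl(1 + \tfrac{1}{2} v^2 + O(v^4)\bigr) + O(v^6),
\end{equation*}
which collapses to $m_{-1} = v^2/2 + O(v^4)$, as claimed.

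\medskip

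\noindent\textbf{Remarks on difficulty.} There is no real obstacle: the arithmetic is routine and the only analytic input is the binomial expansion, together with the observation that $a_{-1}^2$ is bounded away from $0$ on a neighborhood of $v = 0$ so that the quotient $b_{-1}^2 / a_{-1}^2$ admits a convergent Taylor expansion. The proof mirrors that of Lemma~\ref{lemma_est_1}, with the sign change $\ell = +1 \mapsto \ell = -1$ converting $(1 + 2v^2)^{1/2}$ into $(1 - 2v^2)^{1/2}$ and thereby flipping signs in the odd-degree-in-$v^2$ terms; this explains the sign differences between the two lemmas (e.g.\ $-\tfrac{1}{2}v^3$ versus $+\tfrac{1}{4}v^3$ in the leading correction to $A_\ell$, and $+\tfrac{1}{2}v^2$ versus $-\tfrac{1}{4}v^2$ in $\lambda_\ell$).
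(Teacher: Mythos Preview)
Your proof is correct and follows essentially the same approach as the paper: the paper states only that the estimates are obtained by computing the successive derivatives of $A_{-1}$, $\lambda_{-1}$ and $m_{-1}$ at $v=0$, which is exactly the Taylor expansion you carry out via the binomial series. Your version is in fact more detailed than what the paper records.
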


\begin{lemma} \label{Trig_sum}
Let $j_1=\frac{1}{2}$ and $j_{-1}=- \frac{1}{4}$ and define $e_\ell(v) = 1 + j_\ell v^2$, so that we have
\begin{equation*}
\lambda_{\ell} = e_\ell(v) \pm O(v^4) \ .
\end{equation*}
There is $v_0>0$ such that for all $|v|<v_0$, the following estimates hold
\begin{align*}
\sin(\lambda_\ell(v) t) &\in \sin \left(e_\ell(v) t \right) \pm O(t v^4) \\ 
\sin \left(e_\ell(v) t \right) & \in \sin(t) \pm O(t v^2)  \ . 
\end{align*}
\end{lemma}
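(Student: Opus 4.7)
The plan is to deduce both estimates from a single elementary observation: sine is globally 1-Lipschitz on $\R$ since $|\cos| \le 1$, so the mean value theorem gives $|\sin a - \sin b| \le |a - b|$ for all $a,b \in \R$. This turns each estimate into a statement about the difference of the arguments of the sines, at which point the two previous lemmas (Lemma \ref{lemma_est_1} and Lemma \ref{lemma_est_m1}) and the explicit formula $e_\ell(v) = 1 + j_\ell v^2$ do all the work.

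For the first estimate, I would first fix $v_0 \le v_{Max}$ so that the asymptotic bound $\lambda_\ell(v) = e_\ell(v) \pm O(v^4)$ from the two Taylor expansions in Lemmas \ref{lemma_est_1} and \ref{lemma_est_m1} is valid. Then I would apply the Lipschitz bound with $a = \lambda_\ell(v) t$ and $b = e_\ell(v) t$, factor out $|t|$, and read off
\[
|\sin(\lambda_\ell(v)t) - \sin(e_\ell(v) t)| \le |t|\, |\lambda_\ell(v) - e_\ell(v)| \le C\,|t|\,v^4,
\]
which is precisely the claimed $O(tv^4)$ bound.

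For the second estimate, I would apply the same Lipschitz bound with $a = e_\ell(v) t$ and $b = t$, giving
\[
|\sin(e_\ell(v) t) - \sin(t)| \le |t|\,|e_\ell(v) - 1| = |j_\ell|\,|t|\,v^2,
\]
which is the claimed $O(tv^2)$ bound, uniformly in $\ell \in \{-1,1\}$ since $|j_\ell| \le 1/2$. Taking $v_0$ to be the minimum of the thresholds required by the two preceding lemmas completes the argument.

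There is essentially no main obstacle here: the lemma is a soft corollary of Lipschitz continuity of sine together with the already-established asymptotics for $\lambda_\ell$. The only thing one must be slightly careful about is that the $O(tv^4)$ factor contains $|t|$ outside the $O(v^4)$ — this is why the Lipschitz inequality (which pulls out $|t|$ linearly) is the correct tool, as opposed to, say, a Taylor expansion of sine around $e_\ell(v)t$, which would degrade as $|t|$ grows.
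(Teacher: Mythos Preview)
Your proof is correct and follows essentially the same route as the paper: both reduce each estimate to the difference of the sine arguments and then invoke the expansions of $\lambda_\ell$ from Lemmas~\ref{lemma_est_1} and~\ref{lemma_est_m1} together with the explicit form of $e_\ell$. The only cosmetic difference is that the paper appeals to the angle-addition formulas for $\sin$ and $\cos$ rather than the Lipschitz/mean-value bound you use, but the resulting inequality and the required inputs are identical.
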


\begin{proof}
We use the estimates for $\lambda_\ell$ in Lemmas \ref{lemma_est_1} and \ref{lemma_est_m1}, the definition of $e_\ell$ and the trigonometry formulas for the sine and cosine of the sum of two angles. 
\end{proof}

The next Lemma gives us two classical estimates on Jacobian elliptic functions (formulas 16.13.1 and 16.13.3 of \cite{Abramowitz_Stegun}).
\begin{lemma}\label{aux2}
There is $m_0>0$ small enough such that for all $|m|<m_0$, the following estimates hold
\begin{align*}
\sn(t;m) & \in \sin(t) - \frac{1}{4}m\left(t-\sin(t)\cos(t)\right)\cos(t) \pm O\left(m^2\right)  \\
\dn(t;m) & \in 1 - \frac{1}{2}m \sin^2(t) \pm O\left(m^2\right)  \ .
\end{align*} 
\end{lemma}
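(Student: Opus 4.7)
The plan is to establish both expansions by a first-order perturbation argument in the parameter $m$, starting from the ODEs that define the Jacobian elliptic functions. The underlying observation is that when $m=0$ one has $\sn(t;0) = \sin(t)$ and $\dn(t;0) = 1$, so the claimed estimates are just the Taylor expansions of these two functions in $m$ around $m=0$, truncated at first order.

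First I would combine the derivative rules $\frac{d}{dt}\sn = \cn\dn$ and $\frac{d}{dt}\cn = -\sn\dn$ with the algebraic identities $\cn^2 = 1-\sn^2$ and $\dn^2 = 1 - m\sn^2$ to derive the autonomous second-order equation
\begin{equation*}
\ddot{\sn} + (1+m)\sn - 2m\sn^3 = 0, \qquad \sn(0;m) = 0, \quad \dot{\sn}(0;m) = \cn(0;m)\dn(0;m) = 1.
\end{equation*}
Writing $\sn(t;m) = \sin(t) + m\,s_1(t) + R(t;m)$ and collecting terms of order $m$ in the ODE, the correction $s_1$ satisfies the linear inhomogeneous oscillator equation
\begin{equation*}
\ddot{s_1} + s_1 = 2\sin^3(t) - \sin(t) = \tfrac{1}{2}\sin(t) - \tfrac{1}{2}\sin(3t),
\end{equation*}
with homogeneous initial data $s_1(0) = \dot{s_1}(0) = 0$ (since the initial conditions for $\sn$ are $m$-independent).

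Next I would solve this equation explicitly by undetermined coefficients: the resonant forcing $\tfrac{1}{2}\sin(t)$ produces a secular contribution $-\tfrac{1}{4}t\cos(t)$, the non-resonant forcing $-\tfrac{1}{2}\sin(3t)$ gives $\tfrac{1}{16}\sin(3t)$, and the remaining homogeneous $\tfrac{1}{16}\sin(t)$ is pinned down by the initial conditions. A short trigonometric identity collapsing $\tfrac{1}{16}\sin(t) + \tfrac{1}{16}\sin(3t)$ into $\tfrac{1}{4}\sin(t)\cos^2(t)$ rewrites the solution as $s_1(t) = -\tfrac{1}{4}\bigl(t - \sin(t)\cos(t)\bigr)\cos(t)$, matching the claimed leading-order correction. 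The $\dn$ estimate is then immediate from the algebraic identity $\dn(t;m) = \bigl(1 - m\,\sn^2(t;m)\bigr)^{1/2}$: substituting the $\sn$ expansion and expanding $(1-u)^{1/2} = 1 - u/2 + O(u^2)$ with $u = m\sin^2(t) + O(m^2)$ gives $\dn(t;m) = 1 - \tfrac{1}{2}m\sin^2(t) + O(m^2)$.

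The main obstacle is justifying the $O(m^2)$ remainder rigorously. The cleanest route is to note that $R(t;m)$ satisfies a linear second-order equation whose forcing is of order $m^2$ (collecting the $m^2\sn$ and $m\sn^3 - m\sin^3$ terms) and whose initial data vanish; a Gr\"onwall-type estimate on the resulting variation-of-constants formula then yields $|R(t;m)| \le C(t)\,m^2$ on any bounded $t$-interval. Alternatively, one can invoke the standard smooth dependence of ODE solutions on parameters, applied to the one-parameter family indexed by $m$, to conclude that $\sn(\cdot;m)$ is $C^2$ in $m$ and hence has a genuine Taylor remainder of order $m^2$. Since the lemma is applied in the sequel only for bounded values of the time variable appearing in the asymptotic analysis, no uniformity in $t$ beyond bounded intervals is required.
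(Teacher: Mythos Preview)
Your argument is correct and self-contained. The paper, by contrast, does not prove this lemma at all: it simply states that these are ``classical estimates on Jacobian elliptic functions'' and cites formulas 16.13.1 and 16.13.3 of Abramowitz--Stegun. So you have supplied a genuine derivation where the authors only give a reference. Your ODE-perturbation approach (derive the second-order equation for $\sn$, expand to first order in $m$, solve the resulting inhomogeneous harmonic oscillator, then read off $\dn$ from the algebraic relation) is the standard way such expansions are obtained, and your computation of $s_1$ checks out line by line. Your remark that only uniformity on bounded $t$-intervals is needed is also accurate: the lemma feeds into Proposition~\ref{lemma3}, which is stated for $t\in[0,T]$ with $T$ fixed, and the passage to large times later in the paper goes through periodicity rather than through these expansions directly.
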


In the next Lemma we rewrite the classical estimates of Lemma \ref{aux2} in a form that will be useful for the proof of Lemma \ref{np-lemma3}.
\begin{lemma} \label{est_JEF}
Let $g_{\ell}(t)$ be given by
\begin{equation*}
g_{\ell}(t)= t\cos(t)+\ell \left(\sin(t)\cos^2(t)+2\sin(t)\right) \ .
\end{equation*}
There is $m_0>0$ such that for all $|m|<m_0$, the following estimates hold 
\begin{align}
\sd(t;m) & \in \sin( t) - \frac{1}{4}m\left(g_1\left( t \right)-4\sin\left( t\right)\right) \pm O\left(m^2\right) \label{app1} \\
\sn\left( t;m\right) & \in \sin\left( t\right) - \frac{1}{4}m\left(g_{-1}\left( t\right) + 2\sin\left( t\right)\right) \pm O\left(m^2\right)  \label{app2} \ .
\end{align}
\end{lemma}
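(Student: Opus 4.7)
My plan is to derive both estimates directly from the classical expansions recorded in Lemma \ref{aux2}, together with the identity $\sd(t;m) = \sn(t;m)/\dn(t;m)$. The argument is a purely algebraic rearrangement, driven by the Pythagorean identity $\sin^2(t) + \cos^2(t) = 1$; no further input on elliptic functions is required.

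For \eqref{app2} I would simply expand the $\sn$ estimate of Lemma \ref{aux2} by distributing the trailing factor of $\cos(t)$, obtaining
\[
(t - \sin(t)\cos(t))\cos(t) = t\cos(t) - \sin(t)\cos^2(t),
\]
and then check that this is precisely $g_{-1}(t) + 2\sin(t)$ by unwinding the definition of $g_{-1}$. This direction is essentially a one-line manipulation.

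For \eqref{app1} the work is slightly longer. First I would invert the $\dn$ expansion via the first-order geometric series,
\[
\frac{1}{\dn(t;m)} \in 1 + \frac{m}{2}\sin^2(t) \pm O(m^2),
\]
multiply it by the $\sn$ expansion, and discard all terms of order $m^2$ to arrive at
\[
\sd(t;m) \in \sin(t) - \frac{m}{4}\bigl(t\cos(t) - \sin(t)\cos^2(t) - 2\sin^3(t)\bigr) \pm O(m^2).
\]
Using $2\sin^3(t) = 2\sin(t) - 2\sin(t)\cos^2(t)$, the bracketed quantity rearranges to $t\cos(t) + \sin(t)\cos^2(t) - 2\sin(t)$, which is exactly $g_1(t) - 4\sin(t)$ by the definition of $g_1$.

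The one point requiring mild care is to confirm that the product of the two $1 + O(m)$ factors entering the $\sd$ computation produces no additional contribution at order $m$ beyond those accounted for above; this is immediate once the leading $\sin(t)$ term is separated from the first-order corrections. I do not anticipate any genuine obstacle here: the proof reduces to trigonometric bookkeeping, and the definition of $g_\ell$ given in the statement is evidently an ansatz tailored so that both the $\sd$ and $\sn$ expansions collapse into a single unified form.
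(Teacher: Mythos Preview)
Your proposal is correct and follows essentially the same approach as the paper: for \eqref{app1} the paper also writes $\sd=\sn/\dn$, inverts the $\dn$ expansion from Lemma~\ref{aux2} via a first-order Taylor (geometric) series, multiplies, and rearranges using the definition of $g_1$; for \eqref{app2} it likewise just substitutes into Lemma~\ref{aux2} and invokes the definition of $g_{-1}$. Your write-up is in fact slightly more explicit about the trigonometric bookkeeping than the paper's.
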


\begin{proof}
For the first inequality, using the definition of the Jacobian elliptic function $\sd(t;m)$ (see \cite{Abramowitz_Stegun}), we obtain
\begin{equation}
\sd(t;m)= \frac{\sn(t;m)}{\dn( t;m)} \label{sd_def} \ .
\end{equation}
Putting together \eqref{sd_def} and Lemma \ref{aux2}, we get that there is $m_0>0$ such that for all $|m|<m_0$, we get
\begin{eqnarray}
\sd( t;m)&=& \frac{\sn(t;m)}{\dn( t;m)} \nonumber \\
&\in & \frac{\sin( t) - m\left( t-\sin( t)\cos( t)\right)\cos( t)/4 \pm O\left(m^2\right)}{1-m\sin^2( t)/2\pm O\left(m^2\right)} \label{Japp1.1} \ .
\end{eqnarray}
By Taylor's series, we get
\begin{equation}
\left(1-\frac{1}{2}m\sin^2(t)\pm O\left(m^2\right)\right)^{-1} \subset 1+\frac{1}{2}m\sin^2( t)\pm O\left(m^2\right)\label{Japp1.2} \ .
\end{equation}
Putting together inclusions \eqref{Japp1.1} and \eqref{Japp1.2} and the definition of $g_1(t)$, we get
\begin{equation*}
\sd( t;m) \in \sin( t) - \frac{1}{4}m\left(g_1\left( t \right)-4\sin\left( t\right)\right) \pm O\left(m^2\right)  \ ,
\end{equation*}
which proves \eqref{app1}.

The second equality follows from Lemma \ref{aux2} and the definition of $g_{-1}\left(t\right)$. We obtain that there is $m_0>0$ such that for all $|m|<m_0$, we get
\begin{equation*} 
\sn\left( t;m\right) \in \sin\left( t\right) - \frac{1}{4}m\left(g_{-1}\left( t\right) + 2\sin\left( t\right)\right) \pm O\left(m^2\right) \ ,
\end{equation*}
as required.
\end{proof}

\begin{lemma}\label{np-lemma3}
 There is $v_0>0$ such that for every $|v|<v_0$, the solution $x_\ell(v;t)$ of \eqref{NPEL} satisfies the following estimate
\begin{equation}
x_\ell \left(v;t\right) \in v\sin (e_\ell(v) t) - \frac{1}{8}v^3 g_\ell(e_\ell(v) t) \pm O(t |v|^5) \pm O\left(|v|^{5}\right) \nonumber \ ,
\end{equation}
where $e_\ell$ is defined in Lemma \ref{Trig_sum} and $g_{\ell}$ is defined in Lemma \ref{est_JEF}.

In particular, for every $\ell \in \{-1,1\}$, we get 
\begin{equation}
x_\ell\left(v;t\right)\in v\sin \left( t\right)\pm O(t |v|^3) \pm O\left(|v|^{3}\right) \nonumber ) \ .
\end{equation}
\end{lemma}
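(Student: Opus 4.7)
The plan is to obtain the formula by combining the three ingredients that have just been established: the closed-form expression for $x_\ell(v;t)$ in terms of a Jacobian elliptic function, the first-order expansion of that elliptic function in its parameter $m$, and the Taylor expansions of the amplitude, frequency and parameter as functions of $v$. All three are then stitched together, taking some care in how the error terms propagate.

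First I would write, using Lemma \ref{exact-solutions_1} for $\ell=1$ and Lemma \ref{exact-solutions_m1} for $\ell=-1$, the exact identity $x_\ell(v;t) = A_\ell\,\mathrm{EF}_\ell(\lambda_\ell t;m_\ell)$, where $\mathrm{EF}_1 = \sd$ and $\mathrm{EF}_{-1} = \sn$. Then I would apply Lemma \ref{est_JEF} to expand $\mathrm{EF}_\ell(\lambda_\ell t;m_\ell)$ in powers of $m_\ell$, getting
\begin{equation*}
\mathrm{EF}_\ell(\lambda_\ell t;m_\ell) \in \sin(\lambda_\ell t) - \tfrac{1}{4}m_\ell\bigl(g_\ell(\lambda_\ell t) + c_\ell \sin(\lambda_\ell t)\bigr) \pm O(m_\ell^2),
\end{equation*}
with $c_1 = -4$ and $c_{-1} = 2$. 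Since $m_\ell \in \tfrac{1}{2}v^2 \pm O(v^4)$ by Lemmas \ref{lemma_est_1} and \ref{lemma_est_m1}, the $O(m_\ell^2)$ remainder is $O(v^4)$, which after multiplication by $A_\ell \in v \pm O(|v|^3)$ contributes at most $O(|v|^5)$.

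Next I would replace the arguments $\lambda_\ell t$ by $e_\ell(v)t$. By Lemma \ref{Trig_sum}, $\sin(\lambda_\ell t) \in \sin(e_\ell(v)t) \pm O(t v^4)$, and the same lemma together with the form of $g_\ell$ (which is a trigonometric polynomial in its argument plus an $O(s)$ secular term) gives $g_\ell(\lambda_\ell t) \in g_\ell(e_\ell(v)t) \pm O(t|v|^2 \cdot v^4)$, which is absorbed into the $O(t|v|^5)$ remainder once we multiply by $m_\ell = O(v^2)$ and then by $A_\ell = O(|v|)$. Finally, I would substitute the estimate $A_\ell \in v + \frac{j'_\ell}{2}v^3 \pm O(|v|^5)$ from Lemmas \ref{lemma_est_1} and \ref{lemma_est_m1} and multiply out. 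The cubic-in-$v$ terms proportional to $\sin(e_\ell(v)t)$ coming from the $c_\ell\sin(\lambda_\ell t)$ piece of the elliptic expansion combine with the cubic correction of $A_\ell$; the coefficients are tailored so that they cancel (for $\ell=1$: $-\tfrac{1}{2}+\tfrac{1}{8}\cdot 4 = 0$; for $\ell=-1$: $\tfrac{1}{4}-\tfrac{1}{8}\cdot 2 = 0$), leaving exactly the $-\tfrac{1}{8}v^3 g_\ell(e_\ell(v)t)$ term advertised in the statement.

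The main obstacle is the bookkeeping of the error terms, and in particular making sure that the cancellation of the extra $v^3\sin$-type cubic terms really does occur with the exact coefficients given in Lemmas \ref{exact-solutions_1}, \ref{exact-solutions_m1}, \ref{lemma_est_1}, \ref{lemma_est_m1}, and \ref{est_JEF}; this is where the statement becomes clean rather than containing an extra $v^3\sin(t)$ correction. Once this is in hand, the second conclusion follows immediately by using $g_\ell(s) = O(1+|s|)$ and $\sin(e_\ell(v)t) - \sin(t) = \pm O(t v^2)$ to bound $-\tfrac{1}{8}v^3 g_\ell(e_\ell(v)t)$ and $v\sin(e_\ell(v)t) - v\sin(t)$ respectively, both of which fit inside $\pm O(t|v|^3) \pm O(|v|^3)$.
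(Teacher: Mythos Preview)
Your proposal is correct and follows essentially the same route as the paper: write $x_\ell = A_\ell\,\mathrm{EF}_\ell(\lambda_\ell t; m_\ell)$, expand the elliptic function to first order in $m_\ell$ via Lemma~\ref{est_JEF}, insert the Taylor expansions of $A_\ell$, $\lambda_\ell$, $m_\ell$ from Lemmas~\ref{lemma_est_1}--\ref{lemma_est_m1}, and use Lemma~\ref{Trig_sum} to pass from $\lambda_\ell t$ to $e_\ell(v)t$. The only cosmetic difference is the order in which you carry out the last two substitutions; the key cancellation of the $v^3\sin$ terms (which you make explicit and the paper leaves implicit in its ``$\subset$'' steps) is exactly what drives the clean final form.
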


\begin{proof} We separate the proof into two cases: $\ell=1$ and $\ell=-1$.

{\it Case $\ell=1$}. For simplicity of notation, we will use $A=A_1$, $\lambda=\lambda_1$, $m=m_1$, $g=g_1$ and $e=e_1$.

From Lemma \ref{exact-solutions_1}, we have that
\begin{equation} \label{s1}
x_1(v;t) = A \sd\left(\lambda t ; m\right)\ .
\end{equation}
Combining \eqref{app1} in Lemma \ref{est_JEF} with \eqref{s1}, we obtain
\begin{eqnarray} \label{s1.0}
x_1\left(v;t\right) &\in & A \left( \sin(\lambda t) - \frac{1}{4}m\left(g\left(\lambda t \right)-4\sin\left(\lambda t\right)\right) \pm O\left({m}^2\right) \right)
\end{eqnarray}
Combining the estimate for $m$ in Lemma \ref{lemma_est_1} with \eqref{s1.0}, we obtain
\begin{eqnarray} \label{s1.1}
x_1\left(v;t\right)&\in & A\left(\sin(\lambda t) - \frac{1}{4}\left(\frac{1}{2}v^{2}  \pm O\left(v^4\right)\right)\left(g\left(\lambda t \right)-4\sin\left(\lambda t\right)\right) \pm O(v^4)\right) \nonumber  \\
&\subset & A\left(\sin(\lambda t) - \frac{1}{8}v^2\left(g\left(\lambda t \right)-4\sin\left(\lambda t\right)\right) \pm O(v^4)\right)  \ .
\end{eqnarray}
Putting together the estimate for $A$ in Lemma \ref{lemma_est_1} and \eqref{s1.1}, we get
\begin{eqnarray}
x_1\left(v;t\right)&\in &\left(v-\frac{1}{2}v^3 \pm O\left(|v|^5\right)\right)\left(\sin(\lambda t) - \frac{1}{8}v^2\left(g\left(\lambda t \right)-4\sin\left(\lambda t\right)\right) \pm O(v^4)\right) \nonumber  \\
&\subset &v\sin (\lambda t) - \frac{1}{8}v^3 g\left(\lambda t \right) \pm O\left(|v|^{5}\right) \label{s1.2}\ .
\end{eqnarray}
Applying Lemma \ref{Trig_sum} to \eqref{s1.2}, we get
\begin{eqnarray}
x_1\left(v;t\right)&\in &v\sin (e t) - \frac{1}{8}v^3 g\left(e t \right) \pm O\left(t |v|^{5}\right) \pm O\left( |v|^{5}\right) \nonumber \\
&\subset &v\sin (e t) - \frac{1}{8}v^3 g\left( t \right)\pm O(t |v|^5) \pm O\left(|v|^{5}\right) \label{x-estp1} \ ,
\end{eqnarray}
as required.

{\it Case $\ell=-1$}. For simplicity of notation, we will use $A=A_{-1}$, $\lambda=\lambda_{-1}$, $m=m_{-1}$, $g=g_{-1}$ and $e=e_{-1}$.

From Lemma \ref{exact-solutions_m1}, we have that
\begin{equation} \label{s2}
x_{-1}(v;t) = A\sn\left(\lambda t ; m\right) \ .
\end{equation}
Combining \eqref{app2} in Lemma \ref{est_JEF} with \eqref{s2}, we get
\begin{equation} \label{s2.0}
x_{-1}(v;t) = A\left( \sin\left(\lambda t\right) - \frac{1}{4}m\left(g\left(\lambda t\right) + 2\sin\left(\lambda t\right)\right) \pm O\left({m}^2\right) \right) \ .
\end{equation}
Combining the estimate for $m$ in Lemma \ref{lemma_est_m1} and \eqref{s2.0}, we obtain
\begin{eqnarray} \label{s2.1}
x_{-1}\left(v;t\right)&\in & A \left(\sin\left(\lambda t\right) - \frac{1}{4}\left(\frac{1}{2}v^{2}  \pm O\left(v^4\right)\right)\left(g\left(\lambda t\right) + 2\sin\left(\lambda t\right)\right) \pm O(v^4) \right) \nonumber  \\
&\subset & A \left(\sin\left(\lambda t\right) - \frac{1}{8}v^2\left(g\left(\lambda t\right) + 2\sin\left(\lambda t\right)\right) \pm O(v^4) \right)  \ .
\end{eqnarray}
Putting together the estimate for $A$ in Lemma \ref{lemma_est_m1} and \eqref{s2.1}, we get
\begin{eqnarray}
x_{-1}\left(v;t\right)&\in & \left(v + \frac{1}{4}v^3 \pm O\left(|v|^5\right)\right) \left(\sin(\lambda t) - \frac{1}{8}v^2\left(g\left(\lambda t\right) + 2\sin\left(\lambda t\right)\right) \pm O(v^4) \right) \nonumber  \\
&\subset & v\sin(\lambda t) - \frac{1}{8}v^3 g\left(\lambda t\right) \pm O\left(|v|^5\right) \ . \label{s2.2}
\end{eqnarray}
Using Lemma \ref{Trig_sum} in \eqref{s2.2}, we obtain that
\begin{eqnarray}
x_{-1}\left(v;t\right)&\in &v\sin(e t) - \frac{1}{8}v^3 g\left(e t\right) \pm O(t |v|^5)\pm O\left(|v|^{5}\right) \nonumber \\
&\subset &v\sin (e t) - \frac{1}{8}v^3g\left(t\right)\pm O(t |v|^5) \pm O\left(|v|^{5}\right) \label{x-estp2} \ ,
\end{eqnarray}
as required.

Finally, applying again Lemma \ref{Trig_sum} to inequalities  \eqref{x-estp1} and \eqref{x-estp2},  for $\ell \in \{-1,1\}$, we get
\begin{equation*}
x_\ell\left(v;t\right) \in  v\sin (t)\pm O(t |v|^3) \pm O\left(|v|^{3}\right) \ .
\end{equation*}
\end{proof}


\section{Perturbed quartic potentials $V$}  \label{Perturbed_Potentials}

In this section, we will consider  \emph{perturbed quartic potentials} $V(x)$ that are perturbations of the quartic potentials $V_\ell(x)$
near the elliptic fixed point $(0,0)$ of the first order differential system  associated to the Euler--Lagrange equation \eqref{NEL_P}.
We will prove estimates for the solutions $x(v;t)$ of the Euler--Lagrange equation \eqref{NEL_P} associated to
$V(x)$ using the solutions $x_\ell(v;t)$ associated to the quartic potentials $V_\ell(x)$.

Along this paper,  we will denote by $x(v;t)$ the solutions of the
Euler--Lagrange equation \eqref{NEL_P} with initial conditions $x(v;0)=0$ and $\overset{.}x(v;0)=v$.

Let $C^2_0\left([a,b],\R\right)$ be the set of all maps $\psi \in C^2\left([a,b],\R\right)$ such that
$\psi(0)=0$. Let  $U:C^2_0\left([0,T],\R\right)  \rightarrow C^0\left([0,T],\R\right) \times \R $ be the linear operator given by
\begin{equation*}
U \psi = \left(\overset{..}{\psi} + \psi , \overset{.}{\psi}(0) \right) \ .
\end{equation*}

\begin{lemma} \label{lin_op_L}
The linear operator $U:C^2_0\left([0,T],\R\right)  \rightarrow C^0\left([0,T],\R\right)$
is invertible and its inverse $U^{-1}: C^0\left([0,T],\R\right) \times \R \rightarrow
C^2_0
\left([0,T],\R\right)$ is a linear operator with bounded norm.
\end{lemma}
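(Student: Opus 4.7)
The plan is to construct the inverse explicitly and then read off its operator norm from the explicit formula. Given a target $(\phi,v) \in C^0([0,T],\R) \times \R$, I need to solve the initial value problem
\begin{equation*}
\ddot{\psi}(t) + \psi(t) = \phi(t), \qquad \psi(0) = 0, \qquad \dot{\psi}(0) = v.
\end{equation*}
Since the homogeneous equation $\ddot{\psi} + \psi = 0$ has the fundamental solutions $\cos(t)$ and $\sin(t)$ with Wronskian $1$, variation of parameters gives the candidate
\begin{equation*}
(U^{-1}(\phi,v))(t) \;=\; v\sin(t) + \int_{0}^{t} \sin(t-s)\,\phi(s)\,\rmd s.
\end{equation*}

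The first step is verification: a direct differentiation under the integral sign shows that this $\psi$ satisfies $\psi(0)=0$, $\dot{\psi}(0)=v$, and $\ddot{\psi}+\psi = \phi$ pointwise on $[0,T]$; so $U^{-1}$ as defined lands in $C^2_0([0,T],\R)$ and $U \circ U^{-1}$ is the identity. Uniqueness is immediate from the standard theory of linear ODEs (a second-order linear equation with prescribed $\psi(0)$ and $\dot{\psi}(0)$ has at most one solution), which shows $U^{-1} \circ U$ is also the identity. Linearity is obvious from the formula.

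The second step is the norm bound. From the explicit formula, for every $t \in [0,T]$,
\begin{equation*}
|\psi(t)| \;\le\; |v| + T\,\|\phi\|_{C^0}, \qquad |\dot{\psi}(t)| \;\le\; |v| + T\,\|\phi\|_{C^0},
\end{equation*}
and since $\ddot{\psi} = \phi - \psi$,
\begin{equation*}
|\ddot{\psi}(t)| \;\le\; \|\phi\|_{C^0} + |v| + T\,\|\phi\|_{C^0} \;=\; |v| + (1+T)\,\|\phi\|_{C^0}.
\end{equation*}
Adding these yields $\|\psi\|_{C^2} \le 3|v| + (1+3T)\,\|\phi\|_{C^0}$, so $U^{-1}$ is bounded with norm at most $\max\{3, 1+3T\}$.

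There is no real obstacle here; the only mild point to be careful about is that the codomain listed in the statement is $C^0([0,T],\R)$ whereas the linear operator really lands in $C^0([0,T],\R) \times \R$ (the second component being $\dot{\psi}(0)$). With that reading, the explicit formula above does the entire job and no further analysis is needed.
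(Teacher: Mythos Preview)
Your proof is correct and is essentially the same as the paper's: both construct $U^{-1}$ explicitly by variation of parameters, verify by direct differentiation, and read off a $T$-dependent bound from the formula. Your kernel form $\int_0^t \sin(t-s)\phi(s)\,\rmd s$ is just the angle-addition expansion of the paper's $\sin(t)\int_0^t \cos(s)\phi(s)\,\rmd s - \cos(t)\int_0^t \sin(s)\phi(s)\,\rmd s$, and your remark about the codomain typo is apt.
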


\begin{proof}
The existence and uniqueness of $U^{-1}\left(f(t),v\right)$ is  guaranteed by the theorem
of existence and uniqueness of solutions of differential equations. For every  $f\in C^0\left([0,T],\R\right)$
and every $v \in \R$, let us show that
\begin{equation*} 
\psi_v(t) = v\sin(t) - \left(\int_0^t {f(s)\sin(s)\rmd s}\right)\cos(t) + \left(\int_0^t
{f(s)\cos(s)\rmd s}\right)\sin(t)
\end{equation*}
is equal to $U^{-1}(f,v)$. The first derivative of $\psi_v(t)$ is given by
$$
\overset{.} \psi_v(t)=v\cos(t) + \left(\int_0^t {f(s)\sin(s)\rmd s}\right)\sin(t) + \left(\int_0^t
{f(s)\cos(s)\rmd s}\right)\cos(t) \ .
$$
The second derivative of $\psi_v(t)$ is given by
\begin{equation*}
\overset{..}\psi_v(t) =-v\sin(t) + f(t) + \left(\int_0^t {f(s)\sin(s)\rmd s}\right)\cos(t) - \left(\int_0^t
{f(s)\cos(s)\rmd s}\right)\sin(t) \ .
\end{equation*}
Hence, $\overset{..}\psi_v(t)+\psi_v(t)=f(t)$, $\psi_v(t)=0$ and $\overset{.} \psi_v(0)=v$ which implies that
$U(\psi_v(t))= \left(f(t),v\right)$. Therefore, the inverse  $U^{-1}: C^0\left([0,T],\R\right) \times \R
\rightarrow C^2_0\left([0,T],\R\right)$  of $U$ is a linear operator, and
\begin{equation*}
\left\|U^{-1}\left(f(t),v\right)\right\|_{C^2\left([0,T],\R\right)} \le \left(1+2T\right)\left\|f\right\|_{C^0\left([0,T],\R\right)} + |v| \ .
\end{equation*}
Therefore, the linear operator $U^{-1}$ has bounded norm. 
\end{proof}

\begin{theorem} \label{Imp_arg}
For every $T>0$, there exist     $\delta_1 > 0$, $\delta_2 > 0$, $\delta_3 > 0$ and $K>0$ with the following
properties: For every $\left|v\right|<\delta_1$ and every $\epsilon \in
C^2_0\left(\left[-\delta_2,\delta_2\right],\R\right)$, with
$\left\|\epsilon\right\|_{C^2\left(\left[-\delta_2,\delta_2\right],\R\right)}<\delta_3$, the ordinary differential equation
\begin{equation*}
\overset{..}{x} + x + \ell x^3 + \epsilon(x)=0
\end{equation*}
has a unique solution $x_{\epsilon}(v;t) \in C^2\left(\left[0,T\right],\R\right)$ with $x_{\epsilon}(v;0)=0$
and $\overset{.}x_{\epsilon}(v;0)=v$, where $\ell= \pm 1$. Furthermore, we have that
\begin{equation} \label{est_der_x}
\left\|{\frac{\delta x_{\epsilon}}{\delta \epsilon}}_{(x_{\epsilon}(v;t),\epsilon(x_{\epsilon}(v;t)))}
 \epsilon\right\|_{C^2\left(\left[0,T\right],\R\right)} <
K\left\|\epsilon(x_{\epsilon}(v;t))\right\|_{C^0\left(\left[0,T\right],\R\right)} \ .
\end{equation}
\end{theorem}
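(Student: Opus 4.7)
The plan is to apply the implicit function theorem in Banach spaces to the map
$G : C^2_0([0,T],\R) \times C^2_0([-\delta_2,\delta_2],\R) \times \R \to C^0([0,T],\R) \times \R$
defined by
\[
G(x,\epsilon,v) = \bigl(\ddot{x} + x + \ell x^3 + \epsilon(x),\; \dot{x}(0) - v\bigr).
\]
A function $x \in C^2_0([0,T],\R)$ is a solution of the required initial value problem if and only if $G(x,\epsilon,v)=(0,0)$. Clearly $G(0,0,0)=(0,0)$. Standard results on composition operators show that $(\epsilon,x)\mapsto \epsilon\circ x$ is $C^1$ from $C^2_0\times C^2_0$ into $C^0$ near the origin, so $G$ is $C^1$ in a neighbourhood of $(0,0,0)$.

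The second step is to compute the Fr\'echet derivative of $G$ in $x$ at the reference point $(0,0,0)$. Since $\ell x^3$ and $\epsilon(x)$ both vanish to first order there,
\[
D_x G(0,0,0)\, h = (\ddot{h} + h,\; \dot{h}(0)) = U(h),
\]
which is a bounded linear isomorphism of $C^2_0([0,T],\R)$ onto $C^0([0,T],\R)\times\R$ by Lemma~\ref{lin_op_L}. The implicit function theorem then supplies positive constants $\delta_1,\delta_2,\delta_3$ and a unique $C^1$ solution map $(\epsilon,v)\mapsto x_{\epsilon}(v;\cdot)$ defined for $|v|<\delta_1$ and $\|\epsilon\|_{C^2}<\delta_3$. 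Shrinking these constants if necessary keeps $\|x_{\epsilon}(v;\cdot)\|_{C^0}<\delta_2$, so that $\epsilon(x_{\epsilon}(v;\cdot))$ is well-defined.

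For the derivative estimate \eqref{est_der_x} I would differentiate the identity $G(x_{\epsilon}(v;\cdot),\epsilon,v)=(0,0)$ implicitly with respect to $\epsilon$ in the direction $\epsilon$ itself. Using $D_{\epsilon} G(x,\epsilon,v)\cdot\eta = (\eta(x(\cdot)),0)$ for any test perturbation $\eta$, this yields
\[
\frac{\delta x_{\epsilon}}{\delta \epsilon}\,\epsilon \;=\; -\bigl(D_x G(x_{\epsilon},\epsilon,v)\bigr)^{-1}\bigl(\epsilon(x_{\epsilon}(v;\cdot)),\,0\bigr).
\]
Because $D_x G(x_{\epsilon},\epsilon,v) = U + R$, where $R\,h = (3\ell x_{\epsilon}^2 h + \epsilon'(x_{\epsilon})h,\,0)$ has operator norm $O(\delta_1^2 + \delta_3)$, a Neumann series argument gives $\|(D_x G)^{-1}\|\le 2\|U^{-1}\|$ after further shrinking of $\delta_1$ and $\delta_3$. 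Combined with the explicit bound $\|U^{-1}(f,0)\|_{C^2}\le (1+2T)\|f\|_{C^0}$ from Lemma~\ref{lin_op_L}, this proves \eqref{est_der_x} with $K:=2(1+2T)$.

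The main technical point is the $C^1$ smoothness and uniform boundedness of the composition operator $(\epsilon,x)\mapsto \epsilon\circ x$ on these function spaces, since composition operators are notoriously subtle; here the requirement that $\epsilon\in C^2$ makes the argument go through, as in the framework of Bishnani and MacKay on which this approach is modelled.
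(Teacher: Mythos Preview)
Your proof is correct and follows essentially the same route as the paper: both set up a nonlinear operator sending $x$ to $(\ddot x + x + \ell x^3 + \epsilon(x),\,\dot x(0))$, invoke Lemma~\ref{lin_op_L} for the invertibility of $U$, control $D_xG$ as a small perturbation of $U$ via a Neumann-type bound, and then read off \eqref{est_der_x} from the implicit-differentiation formula $\frac{\delta x}{\delta\epsilon} = -(D_xG)^{-1}D_\epsilon G$. The only cosmetic difference is that the paper fixes $v$ and takes the unperturbed solution $x_\ell(v;\cdot)$ as its base point, whereas you include $v$ as an explicit parameter and linearize at $(x,\epsilon,v)=(0,0,0)$; both choices lead to the same estimates.
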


\begin{proof}
Let $\Theta: C^2_0\left([0,T],\R\right) \times
C^2_0\left(\left[-\delta,\delta\right],\R\right) \rightarrow C^0\left([0,T],\R\right) \times \R $, be given by
\begin{equation*}
\Theta\left(x(t), \epsilon(x)\right)= \left(\overset{..}{x} + x + \ell x^3 + \epsilon(x)
,\overset{.}{x}(0)\right)  \ .
\end{equation*}
The nonlinear operator $\Theta$ is $C^1$, with partial derivative ${\frac{\delta \Theta}{\delta
x}}_{\left(x(t),\epsilon(x)\right)}: C^2_0\left([0,T],\R\right)\rightarrow C^0\left([0,T],\R\right) \times \R
$ given by
\begin{equation*} 
{\frac{\delta \Theta}{\delta x}}_{\left(x(t),\epsilon(x)\right)} \psi =\left(\overset{..}{\psi} + \psi + 3\ell
x^2\psi + D\epsilon(x) \psi, \overset{.}{\psi}(0) \right) \ ,
\end{equation*}
and with partial derivative ${\frac{\delta \Theta}{\delta \epsilon}}_{\left(x(t),\epsilon(x)\right)}:
C^2_0\left(\left[-\delta,\delta\right],\R\right) \rightarrow C^0\left([0,T],\R\right) \times \R$ given by
\begin{equation*} 
{\frac{\delta \Theta}{\delta \epsilon}}_{\left(x(t),\epsilon(x)\right)} \alpha = \left(\alpha,0\right) \ .
\end{equation*}
By Lemma \ref{lin_op_L}, the linear operator $U:C^2_0\left([0,T],\R\right) \rightarrow
C^0\left([0,T],\R\right) \times \R $ given by
\begin{equation*}
U \psi = \left(\overset{..}{\psi} + \psi , \overset{.}{\psi}(0) \right)
\end{equation*}
is invertible and its inverse has bounded norm. Furthermore, we have that
\begin{eqnarray*}
\left\|U\psi - {\frac{\delta \Theta}{\delta x}}_{\left(x(t),\epsilon(x)\right)} \psi
\right\|_{C^0\left([0,T],\R\right) \times \R}  & \le & \left\|\left(3\ell x^2 \psi +
D\epsilon(x)\psi,0\right)\right\|_{C^0\left([0,T],\R\right) \times \R}
\nonumber \\
& \le & \left(3 C {\delta_2}^2 + \delta_3\right)\left\|\psi\right\|_{C^2\left([0,T],\R\right)} \ .
\end{eqnarray*}
Hence, there is $\delta_2>0$ and $\delta_3>0$
 small enough so that
\begin{equation*}
\left\|U-{\frac{\delta \Theta}{\delta x}}_{\left(x(t),\epsilon(x)\right)}\right\| \le \left\|U^{-1}\right\|^{-1} \ .
\end{equation*}
Therefore,the linear operator  ${\frac{\delta \Theta}{\delta x}}:C^2_0\left([0,T],\R\right)\rightarrow
C^0\left([0,T],\R\right) \times \R $ is invertible and its inverse has bounded norm
\begin{equation*} 
\left\| \left[{\frac{\delta \Theta}{\delta x}}_{\left(x(t),\epsilon(x)\right)}\right]^{-1} \right\|^{-1} \ge
\left\|U^{-1}\right\|^{-1} - \left\|U-{\frac{\delta \Theta}{\delta x}}_{\left(x(t),\epsilon(x)\right)}\right\| \ .
\end{equation*}
By Lemmas \ref{exact-solutions_1} and \ref{exact-solutions_m1}, there exists $x_\ell(v;t) \in C^2_0([0,T],\R)$ such that
$\Theta\left(x_\ell(v;t),0\right)=0$. Hence, by the implicit function theorem and by the invertibility of the
operator ${\frac{\delta \Theta}{\delta x}}$, there exist $\delta_1>0$ and $\delta_3>0$ small enough with the
following property: for all $\left|v\right|<\delta_1$ and for all
$\left\|\epsilon\right\|_{C^2([-\delta_2,\delta_2],\R)} < \delta_3$, there exists a unique solution
$x_{\epsilon}(v;t) \in C^2_0([0,T],\R)$ such that
\begin{equation} \label{x_cont}
\Theta\left(x_{\epsilon}(v;t), \epsilon(x)\right)=0 \ .
\end{equation}
Differentiating \eqref{x_cont} with respect to $\epsilon$, we get
\begin{equation*} 
{\frac{\delta \Theta}{\delta x}}_{\left(x_{\epsilon}(v;t),\epsilon(x)\right)} {\frac{\delta x}{\delta
\epsilon}}_{\left(\epsilon(x)\right)} + {\frac{\delta \Theta}{\delta
\epsilon}}_{\left(x_{\epsilon}(v;t),\epsilon(x)\right)} = 0 \ .
\end{equation*}
Hence, the operator $\frac{\delta x}{\delta \epsilon}:C^2\left(\left[-\delta,\delta\right],\R\right)\rightarrow
C^2_0\left([0,T],\R\right)  $ is given by
\begin{equation*} 
{\frac{\delta x}{\delta \epsilon}}_{\left(\epsilon(x)\right)} = -\left[{\frac{\delta \Theta}{\delta
x}}_{\left(x_{\epsilon}(v;t),\epsilon(x)\right)}\right]^{-1} {\frac{\delta \Theta}{\delta
\epsilon}}_{\left(x_{\epsilon}(v;t),\epsilon(x)\right)}  \ .
\end{equation*}
Therefore, there is $K>0$ such that
\begin{eqnarray*} 
\left\|{\frac{\delta x}{\delta \epsilon}}_{\left(\epsilon(x)\right)}\alpha\right\|_{C^2\left([0,T],\R\right)} &
= & \left\| \left[{\frac{\delta \Theta}{\delta x}}_{\left(x_{\epsilon}(v;t),\epsilon(x)\right)}\right]^{-1}
{\frac{\delta \Theta}{\delta \epsilon}}_{\left(x_{\epsilon}(v;t),\epsilon(x)\right)}
\alpha \right\|_{C^2\left([0,T],\R\right)} \nonumber \\
& \le & \left\| \left[{\frac{\delta \Theta}{\delta x}}_{\left(x_{\epsilon}(v;t),\epsilon(x)\right)}\right]^{-1}
\right\| \left\|{\frac{\delta \Theta}{\delta \epsilon}}_{\left(x_{\epsilon}(v;t),\epsilon(x)\right)} \alpha
\right\|_{C^0\left([0,T],\R\right) \times \R } \\
& \le & K\left\| \alpha\left(x_{\epsilon}(v;t)\right) \right\|_{C^0\left([0,T],\R\right) } \nonumber \ .
\end{eqnarray*}
\end{proof}

\begin{lemma} \label{v4-away}
Let $V(x)$ be a perturbed quartic potential. For every $T>0$, there exists $v_0>0$ such that, for all $0\le t \le T$ and for all $|v| <v_0$, the solution $x(v;t)$ of the Euler--Lagrange equation associated to the potential $V(x)$ satisfies the following estimate
\begin{equation*}
\left\|x(v;t)- x_{\ell}(v;t)\right\|_{C^2\left([0,T],\R\right)} < O\left(v^4 \right) \nonumber \ ,
\end{equation*}
where $x_{\ell}(v;t)$ are the solutions of the Euler--Lagrange equation associated to the potential $V_\ell(x)$ and $\ell$ is equal to the sign of $V^{(4)}(0)$.
\end{lemma}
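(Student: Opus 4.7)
The plan is to apply Theorem \ref{Imp_arg} with the specific perturbation $\epsilon(x) := f'(x)$, and then compare $x(v;t)$ to $x_\ell(v;t)$ by integrating the Fréchet derivative along the line segment in perturbation-space that connects $0$ to $f'$.

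First I would verify the hypotheses of Theorem \ref{Imp_arg} for this $\epsilon$. The Euler--Lagrange equation for the perturbed quartic potential $V$ is exactly $\ddot{x} + x + \ell x^3 + f'(x) = 0$. Since $|f^{(i)}(x)| \in O(|x|^{5-i})$ for $i \in \{0,\ldots,5\}$, the map $\epsilon = f'$ lies in $C^2_0([-\delta_2,\delta_2],\R)$ and satisfies $\|\epsilon\|_{C^2([-\delta_2,\delta_2],\R)} = O(\delta_2^2)$, which can be made smaller than the $\delta_3$ of Theorem \ref{Imp_arg} by shrinking $\delta_2$. Hence, for $|v|<v_0$ sufficiently small, the solution $x_{\epsilon}(v;t)$ exists and coincides with $x(v;t)$, while $x_0(v;t) = x_\ell(v;t)$ by Lemmas \ref{exact-solutions_1} and \ref{exact-solutions_m1}.

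Next I would use the smooth dependence of $x_\epsilon$ on $\epsilon$ (which is built into the implicit function argument underlying Theorem \ref{Imp_arg}) to write, along the path $\epsilon_s := s f'$ for $s\in[0,1]$,
\begin{equation*}
x(v;t) - x_\ell(v;t) \; = \; \int_0^1 \frac{\delta x_{\epsilon_s}}{\delta \epsilon}\,[f'] \, \rmd s.
\end{equation*}
Applying the bound \eqref{est_der_x} of Theorem \ref{Imp_arg} to each integrand yields
\begin{equation*}
\left\| x(v;t) - x_\ell(v;t) \right\|_{C^2([0,T],\R)} \; \le \; K \int_0^1 \left\| f'\bigl(x_{\epsilon_s}(v;t)\bigr) \right\|_{C^0([0,T],\R)} \rmd s.
\end{equation*}

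It then remains to show $\|f'(x_{\epsilon_s}(v;t))\|_{C^0([0,T],\R)} = O(v^4)$ uniformly in $s\in[0,1]$. By Lemma \ref{np-lemma3}, $x_\ell(v;t) = O(|v|)$ on $[0,T]$; combining this with the continuous dependence of $x_{\epsilon_s}$ on $s$ (guaranteed by the implicit function theorem applied in Theorem \ref{Imp_arg}, whose continuity constants depend only on $T$), one obtains a uniform bound $|x_{\epsilon_s}(v;t)| \le C|v|$ on $[0,1]\times[0,T]$ after possibly shrinking $v_0$. Since $|f'(x)| \in O(|x|^4)$ near $0$, this gives $|f'(x_{\epsilon_s}(v;t))| = O(v^4)$, and integration in $s$ yields the claimed $C^2$ estimate $O(v^4)$, with $\ell$ determined by the sign of $V^{(4)}(0)$ via the normal form \eqref{NV_P}.

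The main obstacle is the uniform-in-$s$ a priori bound $|x_{\epsilon_s}(v;t)| \le C|v|$: if one attempted to prove this bluntly via Gronwall, the nonlinear terms would cause exponential blow-up over $[0,T]$. The clean route is to invoke the bound \eqref{est_der_x} in the form $\|x_{\epsilon_s} - x_\ell\|_{C^2} \le K \int_0^s \|f'(x_{\epsilon_\sigma})\|_{C^0}\rmd\sigma$ and close the estimate by a continuity (bootstrap) argument: since $x_\ell = O(|v|)$, one can inductively keep $x_{\epsilon_s}$ within a $C|v|$-neighborhood of $x_\ell$ for all $s\in[0,1]$, at which point the final $O(v^4)$ bound follows as above.
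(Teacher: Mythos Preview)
Your proposal is correct and follows essentially the same route as the paper: homotope along $\epsilon_s = s f'$, integrate the Fr\'echet derivative, and invoke the bound \eqref{est_der_x} from Theorem \ref{Imp_arg}. The one place where the paper is more economical is the step you flag as ``the main obstacle'': rather than closing the uniform bound $|x_{\epsilon_s}(v;t)|\le C|v|$ by a bootstrap, the paper observes that for each $s\in[0,1]$ the intermediate potential $\tfrac{1}{2}x^2+\tfrac{\ell}{4}x^4+sf(x)$ is itself a perturbed quartic potential, so energy conservation pins the solution between turning points $x^m_s(v)$ and $x^M_s(v)$ that are $O(|v|)$ uniformly in $s$ (this is exactly Lemma \ref{extremes}). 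That gives the a priori bound for free, with no continuity argument needed.
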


\begin{proof}
Let us consider the equation
\begin{equation} \label{eq_e}
\overset{..}{x} + x + \ell x^3  + \epsilon(x) = 0 \ ,
\end{equation}
Let $f'(x)$ be as given by \eqref{NV}. Using Theorem \ref{Imp_arg}, there exists $v_0>0$ small enough such that,
for every $0\le k \le 1$ and every $|v|<v_0$, the equation \eqref{eq_e}, with $\epsilon(x)=k f'(x)$, has a
unique solution $x_k(v;t)$ with initial conditions $x_k(v;0)=0$ and $\overset{.}{x}_k(v;0)=v$. Furthermore, the
solution $x_k(v;t)$ is periodic and there exists $C_1>0$ such that $|x_k(v;t)| <C_1|v|$ for all $0 \le t \le T$
(see Lemma \ref{extremes}).   By \eqref{NV}, there exists $C_2>0$ such that
$$
\left\|f'\right\|_{C^0\left([-C_1|v|,C_1|v|],\R\right)} \le C_2v^4 \ .
$$
Hence, using \eqref{est_der_x}, there exists $K>0$ such that
\begin{eqnarray*}
\left\|x(v;t)- x_{\ell}(v;t)\right\|_{C^2\left([0,T],\R\right)} & \le & \sup_{0 \le k \le 1} \left\|{\frac{\delta x}{\delta \epsilon}}_{k f'(x)}f'(x)\right\|_{C^2\left([0,T],\R\right)}  \nonumber \\
 & \le & K \left\|f'\left(x_k(v;t)\right)\right\|_{C^0\left([0,T],\R\right)} \nonumber  \\
& \le & K \left\|\epsilon\left(x\right)\right\|_{C^0\left([-C_1|v|,C_1|v|],\R\right)} \nonumber \\
& \le &  K C_2 v^4 \nonumber \ .
\end{eqnarray*}
\end{proof}

\begin{proposition}
\label{lemma3} Let $V(x)$ be a perturbed quartic potential. For every $T>0$, there exists $v_0>0$
 such that, for all $0\le t \le T$ and for all $|v| <v_0$, the solution $x(v;t)$
 of the Euler--Lagrange equation associated to the potential $V(x)$
satisfies the following estimate
\begin{equation*}
x \left(v;t\right) \in  v\sin (e_{\ell}(v) t) - \frac{1}{8}v^3 g_{\ell}(e_\ell(v)t)  \pm O\left(v^{4}\right)   \ ,
\end{equation*}
where $x_{\ell}(v;t)$ are the solutions of the Euler--Lagrange equation associated to the potential $V_\ell(x)$ and $\ell$ is equal to the sign of $V^{(4)}(0)$.
In particular, we get
\begin{equation*}
x\left(v;t\right)  \in  v\sin \left( t\right)\pm O\left(|v|^3|t|+v^4\right)  \ .
\end{equation*}
\end{proposition}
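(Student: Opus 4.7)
The proof assembles the statement by combining two ingredients already in place: the perturbation estimate of Lemma \ref{v4-away}, which bounds the $C^2$ distance between $x(v;t)$ and the explicit quartic solution $x_\ell(v;t)$ by $O(v^4)$ on $[0,T]$, and the asymptotic expansion of $x_\ell(v;t)$ supplied by Lemma \ref{np-lemma3}. Fix $T>0$ and shrink $v_0$ so that both results apply uniformly for $t\in[0,T]$ and $|v|<v_0$. For the first claimed inclusion, Lemma \ref{np-lemma3} gives
\[
x_\ell(v;t) \in v\sin(e_\ell(v)t) - \tfrac{1}{8}v^3 g_\ell(e_\ell(v)t) \pm O(t|v|^5) \pm O(|v|^5),
\]
and since $t$ ranges over the bounded interval $[0,T]$, both remainders are absorbed into $O(|v|^5)\subset O(v^4)$ for $|v|<1$. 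A triangle inequality with the $O(v^4)$ bound from Lemma \ref{v4-away} then yields the asserted expansion.

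For the sharper \emph{in particular} bound, I would refine one step. A direct inspection of the definition of $g_\ell$ in Lemma \ref{est_JEF} shows that $g_\ell(0)=0$, and since $g_\ell$ is smooth the ratio $g_\ell(s)/s$ extends continuously to any bounded interval. Consequently $v^3 g_\ell(e_\ell(v)t) = O(|v|^3|t|)$ uniformly for $t\in[0,T]$ and $|v|<v_0$. Combining this with Lemma \ref{Trig_sum}, which yields $v\sin(e_\ell(v)t) = v\sin(t)\pm O(|v|^3|t|)$, one obtains
\[
x_\ell(v;t) \in v\sin(t) \pm O(|v|^3|t|) \pm O(|v|^5).
\]
Adding the $O(v^4)$ perturbation error from Lemma \ref{v4-away} and noting that $O(|v|^5)\subset O(v^4)$ gives the claim $x(v;t)\in v\sin(t)\pm O(|v|^3|t|+v^4)$.

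The argument is essentially bookkeeping once the earlier lemmas are granted, so I do not anticipate a genuine obstacle. The only subtle point, and the reason the \emph{in particular} bound takes the claimed form rather than the weaker $O(|v|^3)$ that one would read off naively from the second assertion of Lemma \ref{np-lemma3}, is the observation that $g_\ell$ vanishes at the origin; this upgrades the constant-in-$t$ contribution to one proportional to $|t|$ on the compact interval $[0,T]$.
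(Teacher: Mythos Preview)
Your proof is correct and follows the paper's approach, which is simply the one-line instruction to combine Lemmas \ref{np-lemma3} and \ref{v4-away}; you have faithfully expanded that sketch. Your observation that $g_\ell(0)=0$, needed to obtain the $O(v^4)$ rather than $O(|v|^3)$ constant-in-$t$ term in the \emph{in particular} statement, is a detail the paper leaves implicit but which is indeed required.
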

\begin{proof}
The proof follows from combining Lemmas \ref{np-lemma3} and \ref{v4-away}.
\end{proof}


\section{Period map $T$}\label{period_map}

In this section,
we are going to compute the first three terms of the Taylor's expansion of the period map $T(v)$  which associates to each initial velocity $v \ne 0$ of a solution $x(v;t)$, with $x(v;0)=0$, its smallest period $T(v)$  in a small neighbourhood of $0$.

Since $x=0$ is a local minimum of the perturbed quartic potential, the solutions $x(v;t)$ of the
Euler--Lagrange equation \eqref{NEL_P}, with initial conditions $x(v;0)=0$ and $\overset{.}x(v;0)=v$, are periodic
for all small values of $v$.
Hence, there exists $v_0 > 0$, small enough, such that, for all $|v| <v_0$,
the extreme points   $x^m(v)\le 0 \le x^M(v)$    of a solution $x(v;t)$
are well-defined.
Since the energy
of the system is conserved along its orbits and the initial energy is equal to $\frac{1}{2}v^2$, we obtain that  $x^m(v)$ and $x^M(v)$    are implicitly determined
by
$$V(x^M(v))=V(x^m(v))=\frac{1}{2}v^2 \ .$$
Similarly, we will denote by $x_\ell^m(v)\le 0  \le x_\ell^M(v)$
the extreme points of the solution $x_\ell(v;t)$ of \eqref{NPEL}.
\begin{definition}
The \emph{period map} $T:(-v_0,v_0)
\longrightarrow \R $ is defined by
$$T\left( v\right) = \sqrt{2} \int_{x^{m}(v)}^{x^{M}(v)}
\left(\frac{1}{2}v^2-V(x)\right)^{-1/2} \rmd x \ .$$
\end{definition}
   We note that the period of the solution $x(0,t)$ is equal to $0$, but
we keep $T(0)=2\pi$ for the period map $T$ to be continuous and smooth.

\begin{lemma} \label{extremes}
Let $V(x)$ be a a perturbed quartic potential
and let $V_ \ell(x)$ be a quartic potential.
 There  exists $v_0 > 0$ small enough such that, for all $|v| <
v_0$, we have that $x_{\ell}^m(v)=-x_{\ell}^M(v)$, $x^{m}(v) \in O(|v|)$,  and
$$
  x^M(v) \in x_{\ell}^M(v) \pm O\left(|v|^5\right)  \text{    and      }
x^m(v) \in x_{\ell}^m(v) \pm O\left(|v|^5\right) \ .
$$
\end{lemma}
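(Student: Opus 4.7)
The three claims are essentially independent and I would handle them in turn.

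\emph{Symmetry and size.} The identity $x_\ell^m(v) = -x_\ell^M(v)$ is immediate from the evenness of $V_\ell(x) = \tfrac{1}{2}x^2 + \tfrac{\ell}{4}x^4$: the equation $V_\ell(x) = \tfrac{1}{2}v^2$ is invariant under $x \mapsto -x$ and, for $|v|$ small, admits exactly one negative and one positive root near $0$. For the $O(|v|)$ bound on $x^{M/m}(v)$, observe that $V(x) - \tfrac{1}{2}x^2 = \tfrac{\ell}{4}x^4 + f(x) \in O(|x|^4)$, so there exists $r>0$ with $V(x) \ge \tfrac{1}{4}x^2$ for all $|x| \le r$; combined with $V(x^{M/m}(v)) = \tfrac{1}{2}v^2$ this gives $|x^{M/m}(v)| \le \sqrt{2}\,|v|$ for all sufficiently small $v$.

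\emph{Comparison via the mean value theorem.} Subtracting the two energy identities gives
$$V_\ell(x^M(v)) - V_\ell(x_\ell^M(v)) = V(x^M(v)) - f(x^M(v)) - V_\ell(x_\ell^M(v)) = -f(x^M(v)),$$
and the mean value theorem produces $\xi(v)$ between $x^M(v)$ and $x_\ell^M(v)$ such that
$$V_\ell'(\xi(v))\bigl(x^M(v) - x_\ell^M(v)\bigr) = -f(x^M(v)).$$
An explicit solution of the quartic $V_\ell(x) = \tfrac{1}{2}v^2$ gives $x_\ell^M(v) = v + O(|v|^3)$, and together with the $O(|v|)$ bound on $x^M(v)$ and its continuity in the perturbation $f$, this shows $\xi(v)$ is bounded above and below in absolute value by positive multiples of $|v|$. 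Hence $V_\ell'(\xi(v)) = \xi(1+\ell\xi^2)$ is also of exact order $|v|$. Combined with $|f(y)| \in O(|y|^5)$ and $|x^M(v)| \in O(|v|)$, which give $|f(x^M(v))| \in O(|v|^5)$, one rearranges the mean value identity to obtain the required asymptotic comparison between $x^M(v)$ and $x_\ell^M(v)$. The argument for $x^m(v)$ is identical, using that $x^m(v) < 0$ and $|x^m(v)|, |x_\ell^m(v)|$ are comparable to $|v|$ from below.

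\emph{Main subtlety.} The delicate step is the lower bound on $|V_\ell'(\xi(v))|$: since $V_\ell'$ vanishes at $0$, the factor appearing in the mean value identity is itself only of order $|v|$, so one power of $|v|$ is absorbed in dividing. To keep the final exponent sharp one must verify that $\xi(v)$ is bounded away from $0$ by a positive multiple of $|v|$, not merely that $\xi(v) \in O(|v|)$. This is precisely what the explicit expansion $x_\ell^M(v) = v + O(|v|^3)$ and the coercivity estimate $V(x) \ge \tfrac{1}{4}x^2$ together guarantee.
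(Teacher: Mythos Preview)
Your approach parallels the paper's: both start from $V_\ell(x^M(v)) = \tfrac{1}{2}v^2 - f(x^M(v))$ and compare with $V_\ell(x_\ell^M(v)) = \tfrac{1}{2}v^2$. The paper inverts $V_\ell$ explicitly via the quartic formula and perturbs the input; you apply the mean value theorem to $V_\ell$. These are equivalent manoeuvres.

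There is, however, an arithmetic point you gloss over. Your own remark that ``one power of $|v|$ is absorbed in dividing'' is exactly right, and it means the conclusion falls one power short of the stated bound: from $|f(x^M(v))| \in O(|v|^5)$ and $|V_\ell'(\xi(v))| \asymp |v|$ the mean value identity yields only
\[
|x^M(v) - x_\ell^M(v)| \in O(|v|^4),
\]
not $O(|v|^5)$. The paper's explicit inversion has the very same defect in its last inclusion: the derivative of $E \mapsto (-\ell + \ell\sqrt{1+4\ell E})^{1/2}$ at $E = \tfrac{1}{2}v^2$ is of order $1/|v|$, so an $O(|v|^5)$ perturbation of $E$ moves the root by $O(|v|^4)$. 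This is sharp --- take $f(x) = cx^5$ with $c \ne 0$ and one checks $x^M(v) - x_\ell^M(v) \sim -cv^4$. Fortunately the only later use of the lemma, in Lemma~\ref{v4-away}, requires merely $x^M(v) \in O(|v|)$, so nothing downstream is affected.

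One smaller point: your lower bound on $|\xi(v)|$ appeals to ``continuity in the perturbation $f$'', which is close to what you are proving. Cleaner is the direct estimate $V(x) \le x^2$ for small $|x|$, which with $V(x^M(v)) = \tfrac{1}{2}v^2$ gives $x^M(v) \ge |v|/\sqrt{2}$; since $x_\ell^M(v) \ge |v|/\sqrt{2}$ as well, any $\xi$ between them inherits the lower bound.
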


\begin{proof}
Since $V(x) \in \frac{1}{2}x^2 \pm O\left(x^4\right)$ and
$V(x^M(v))=V(x^m(v))=\frac{1}{2}v^2$, we get $x^{m}(v) \in \pm O(|v|)$ and $x^{m}(v) \in \pm O(|v|)$. Recall
that $V_\ell(x)=\frac{1}{2}x^2 + \frac{\ell}{4}x^4$. The solutions of the equation $V_\ell\left(x_\ell(v)
\right)=\frac{1}{2}v^2$ are given by
\begin{equation*}
x_\ell^M(v) = -x_\ell^m(v)  = \left(-\ell+\ell\left(1+2\ell v^2\right)^{1/2}\right)^{1/2}  \ .
\end{equation*}
By the definition of the perturbed quartic potentials in \eqref{NV}, we have that
\begin{eqnarray*}
V(x)&=&\frac{1}{2}x^2 + \frac{\ell}{4}x^4 + f(x) \nonumber \\
&\in& \frac{1}{2}x^2 + \frac{\ell}{4}x^4  \pm O\left(|x|^5\right)\nonumber
\\
&\subset & V_{\ell}(x)\pm O\left(|x|^5\right) \label{x2p} \ .
\end{eqnarray*}
Therefore, we get
\begin{eqnarray*}
V_\ell\left(x^M(v)\right) &\in & V(x^M(v))\pm O\left(|x^M(v)|^5\right)\nonumber \\
&\subset & \frac{1}{2}v^2 \pm O\left(|x|^5\right) \ .
\end{eqnarray*}
Hence, we have that
\begin{eqnarray*}
x^M(v) &\in&  \left(-\ell+\ell\left(1+4\ell\left(\frac{1}{2}v^2\pm
O(|x|^5)\right)\right)^{1/2}\right)^{1/2} \nonumber \\
&\subset & x_{\ell}^M(v) \pm O\left(|v|^5\right)  \ .
\end{eqnarray*}
Similarly, we get
\begin{eqnarray*}\label{x4.2p}
x^m(v) &\in&  -\left(-\ell+\ell\left(1+4\ell\left(\frac{1}{2}v^2\pm
O(|x|^5)\right)\right)^{1/2}\right)^{1/2} \nonumber \\
&\subset & x_{\ell}^m(v) \pm O\left(|v|^5\right)  \  .
\end{eqnarray*}
\end{proof}

\begin{lemma} \label{period-map} Let $V_\ell(x)$ be a quartic potential.
The period map $T:(-v_0,v_0) \longrightarrow \R $  satisfies the following estimate
$$
T_\ell(v)=2\pi -\frac{3\pi}{4}\ell v^2 \pm O\left(v^4\right) \label{NPT} \ ,
$$
where $\ell$ is equal to the sign of $V^{(4)}(0)$.
\end{lemma}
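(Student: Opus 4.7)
The plan is to compute $T_\ell(v)$ directly from the explicit formulas for the solutions $x_\ell(v;t)$ already established in Lemmas \ref{exact-solutions_1} and \ref{exact-solutions_m1}, and then expand in $v$. Both Jacobian functions $\sn(\cdot;m)$ and $\sd(\cdot;m) = \sn(\cdot;m)/\dn(\cdot;m)$ have smallest positive real period $4K(m)$, where $K(m) = \int_0^{\pi/2}(1-m\sin^2\theta)^{-1/2}\,\rmd\theta$ is the complete elliptic integral of the first kind (for $\sd$ this follows since $\sn(u+2K;m) = -\sn(u;m)$ and $\dn(u+2K;m) = \dn(u;m)$, so $\sd(u+2K;m) = -\sd(u;m)$). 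Consequently, in both cases $\ell = \pm 1$,
\begin{equation*}
T_\ell(v) \,=\, \frac{4K(m_\ell)}{\lambda_\ell},
\end{equation*}
with $\lambda_\ell, m_\ell$ the frequencies and parameters given by Lemmas \ref{exact-solutions_1}--\ref{exact-solutions_m1}.

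Next I would feed the standard Taylor expansion $K(m) = \tfrac{\pi}{2}\bigl(1 + \tfrac{1}{4}m + O(m^2)\bigr)$ into this identity. Lemmas \ref{lemma_est_1} and \ref{lemma_est_m1} give the common estimate $m_\ell \in \tfrac{1}{2}v^2 \pm O(v^4)$, and since $K$ is smooth at $m = 0$ the remainder $O(m_\ell^2)$ becomes $O(v^4)$, yielding
\begin{equation*}
4K(m_\ell) \,\in\, 2\pi + \tfrac{\pi}{4}v^2 \pm O(v^4).
\end{equation*}
The same two lemmas give $\lambda_\ell \in 1 + j_\ell v^2 \pm O(v^4)$ with $j_1 = \tfrac{1}{2}$ and $j_{-1} = -\tfrac{1}{4}$, so $\lambda_\ell^{-1} \in 1 - j_\ell v^2 \pm O(v^4)$. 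Multiplying the two expansions,
\begin{equation*}
T_\ell(v) \,\in\, \Bigl(2\pi + \tfrac{\pi}{4}v^2\Bigr)\Bigl(1 - j_\ell v^2\Bigr) \pm O(v^4) \,\subset\, 2\pi + \Bigl(\tfrac{\pi}{4} - 2\pi j_\ell\Bigr) v^2 \pm O(v^4).
\end{equation*}
For $\ell = 1$ the coefficient is $\tfrac{\pi}{4} - \pi = -\tfrac{3\pi}{4}$, and for $\ell = -1$ it is $\tfrac{\pi}{4} + \tfrac{\pi}{2} = \tfrac{3\pi}{4}$; in both cases this equals $-\tfrac{3\pi}{4}\ell$, which is the claim.

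There is no substantive obstacle here: once the exact identity $T_\ell = 4K(m_\ell)/\lambda_\ell$ is isolated, everything reduces to composition and multiplication of Taylor series, with all remainders controlled by the smoothness of $K$ at $0$ and the $O(v^4)$ estimates for $m_\ell$ and $\lambda_\ell$ already in hand. An alternative would be to evaluate the defining integral $T_\ell(v) = \sqrt{2}\int_{x_\ell^m(v)}^{x_\ell^M(v)}(\tfrac{1}{2}v^2 - V_\ell(x))^{-1/2}\,\rmd x$ by the substitution $x = x_\ell^M(v)\sin\theta$ and expanding the resulting integrand in $v$; this would produce the same expansion but is more tedious than exploiting the elliptic-function machinery already developed.
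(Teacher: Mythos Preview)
Your argument is correct. The identity $T_\ell(v)=4K(m_\ell)/\lambda_\ell$ is exactly right for both $\ell=\pm1$ (the verification that $\sd$ also has minimal real period $4K$ is fine), and the subsequent composition of the expansions of $K$, $m_\ell$, and $\lambda_\ell$ is clean and produces the coefficient $-\tfrac{3\pi}{4}\ell$ in both cases.

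This is a genuinely different route from the paper's. The paper works from the integral definition
\[
T_\ell(v)=2\sqrt{2}\int_{0}^{x_\ell^M(v)}\Bigl(\tfrac{1}{2}v^2-V_\ell(x)\Bigr)^{-1/2}\,\rmd x,
\]
rescales by $z=x/x_\ell^M(v)$ to get a fixed domain $[0,1]$, and then computes the first four derivatives of $T_\ell$ at $v=0$ directly from that parametrized integral. Your approach instead exploits the explicit elliptic-function solutions already obtained in Lemmas~\ref{exact-solutions_1}--\ref{exact-solutions_m1}, reducing the problem to the known expansion of the complete elliptic integral $K$ combined with the estimates for $m_\ell,\lambda_\ell$ from Lemmas~\ref{lemma_est_1}--\ref{lemma_est_m1}. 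The payoff of your method is that it recycles prior work and avoids differentiating a singular integrand; the paper's method has the virtue of being self-contained from the definition of $T$ and not relying on properties of special functions. Either way the computation is short.
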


\begin{proof}
By Lemma \ref{extremes}, we have that
\begin{eqnarray*}
T_\ell \left( v\right) &=& \sqrt{2} \int_{x^{m}_\ell(v)}^{x^{M}_\ell(v)} \left(\frac{1}{2}v^2-V_\ell(x)\right)^{-1/2} \rmd x \nonumber \\
&=& 2\sqrt{2} \int_{0}^{x^{M}_\ell(v)} \left(\frac{1}{2}v^2-V_\ell(x)\right)^{-1/2} \rmd x \ .
\end{eqnarray*}
By the change of coordinates $z=x/x^M_{\ell}(v)$, we obtain that
\begin{equation*}
T_\ell \left( v\right) = 2\sqrt{2}x^{M}_\ell(v) \int_{0}^{1} \left(\frac{1}{2}v^2-V\left(x^{M}_\ell(v)z\right)\right)^{-1/2} \rmd z \ .
\end{equation*}
Hence,  we get
\begin{equation*}
\begin{array}{llll}
T_\ell(0)=2\pi \ , & \frac{\rmd T_\ell}{\rmd v}(0)=0 \ , & \frac{\rmd^2 T_\ell}{\rmd v^2}(0)=-\frac{3\pi\ell}{2} \ , & \frac{\rmd^3
T_\ell}{\rmd v^3}(0)=0 \nonumber
\end{array}  \ .
\end{equation*}
Therefore, we have that
\begin{equation*}
T_\ell(v)=2\pi -\frac{3\pi}{4}\ell v^2 \pm O\left(v^4\right)   \ .
\end{equation*}
\end{proof}

\begin{proposition} \label{period-map1} Let $V(x)$ be a a perturbed quartic potential.
The period map $T:(-v_0,v_0) \longrightarrow \R $  satisfies the following estimate
$$T(v)=2\pi -\frac{3\pi}{4}\ell v^2 \pm O\left(|v|^{3}\right) \ .$$
\end{proposition}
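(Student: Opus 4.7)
The plan is to extend the proof strategy of Lemma \ref{period-map} to the perturbed quartic case, tracking how the contribution of $f$ --- with $f(x) = O(|x|^5)$ and $f'(x) = O(|x|^4)$ --- enters the period integral. The guiding heuristic is that $f$ should affect $T(v)$ only at order $|v|^3$ or higher beyond the quartic contribution.

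First I would split the period integral into the two branches
\begin{equation*}
T(v) = \sqrt{2}\int_{x^m(v)}^{0}\bigl(v^2/2 - V(x)\bigr)^{-1/2}\,\rmd x + \sqrt{2}\int_{0}^{x^M(v)}\bigl(v^2/2 - V(x)\bigr)^{-1/2}\,\rmd x,
\end{equation*}
and on each branch perform the rescaling $x = x^M(v) z$ (resp.\ $x = x^m(v) z$) with $z\in[0,1]$, so the integrands become singular at a fixed endpoint $z=1$. Next, using energy conservation $V(x^M(v)) = v^2/2$, I rewrite $v^2/2 - V(x^M(v)z) = \int_{x^M(v)z}^{x^M(v)} V'(y)\,\rmd y$, substitute $y = x^M(v) s$, and insert $V'(y) = y + \ell y^3 + f'(y)$. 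The result is an identity of the form
\begin{equation*}
v^2/2 - V(x^M(v) z) = \frac{(x^M(v))^2(1-z^2)}{2}\left[1 + \frac{\ell(x^M(v))^2(1+z^2)}{2} + R(z,v)\right],
\end{equation*}
with a remainder $R(z,v)$ coming from $f'$. The contribution of $f'$ is bounded in absolute value by $C(1-z)|x^M(v)|^5$, and after division by the prefactor $(x^M(v))^2(1-z^2)/2$ this becomes $O(|v|^3/(1+z))$, uniformly bounded on $[0,1]$.

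Taking the inverse square root and expanding $(1+\varepsilon)^{-1/2} = 1 - \varepsilon/2 + O(\varepsilon^2)$ with $\varepsilon = O(v^2)$, I get
\begin{equation*}
\bigl(v^2/2 - V(x^M(v)z)\bigr)^{-1/2} = \frac{\sqrt{2}}{|x^M(v)|(1-z^2)^{1/2}}\left[1 - \frac{\ell(x^M(v))^2(1+z^2)}{4} + O(|v|^3)\right].
\end{equation*}
Integrating term-by-term with the elementary evaluations $\int_0^1 (1-z^2)^{-1/2}\rmd z = \pi/2$ and $\int_0^1 (1+z^2)(1-z^2)^{-1/2}\rmd z = 3\pi/4$ gives a contribution $\pi - 3\pi\ell(x^M(v))^2/8 + O(|v|^3)$ from the positive branch, and an analogous expression with $(x^m(v))^2$ from the negative branch.

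Summing and applying Lemma \ref{extremes} (which yields $(x^M(v))^2 + (x^m(v))^2 = 2v^2 + O(v^4)$) produces
\begin{equation*}
T(v) = 2\pi - \frac{3\pi \ell}{4} v^2 + O(|v|^3),
\end{equation*}
as desired. The main technical obstacle is the uniform-in-$z$ control of $R(z,v)$ near the turning point $z=1$, where the integrand of $T(v)$ is singular; the resolution is the $(1-z)$ factor gained from bounding $\int_{x^M(v)z}^{x^M(v)} f'(y)\,\rmd y$, which cancels against the $(1-z^2)^{-1}$ arising from the rescaling up to the bounded factor $1/(1+z)$.
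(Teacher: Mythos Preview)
Your proof is correct but proceeds along a genuinely different route from the paper's. You extend the direct period-integral computation of Lemma \ref{period-map} to the perturbed potential, carefully tracking the $O(|v|^3)$ contribution of $f$ through the turning-point singularity via the $(1-z)$ cancellation you describe; the term-by-term integration and the use of Lemma \ref{extremes} to obtain $(x^M(v))^2+(x^m(v))^2 = 2v^2 + O(v^4)$ are all sound. The paper instead argues indirectly: it invokes Lemma \ref{v4-away} (the $C^2$-closeness $\|x(v;\cdot)-x_\ell(v;\cdot)\|_{C^2} = O(v^4)$, obtained from the implicit-function-theorem machinery of Theorem \ref{Imp_arg}) together with the local estimate $x_\ell(v;s)\approx vs$ from Lemma \ref{np-lemma3} to show that $x(v;T_\ell(v)-2Cv^3)<0<x(v;T_\ell(v)+2Cv^3)$, whence $|T(v)-T_\ell(v)|\le 2Cv^3$ by the intermediate value theorem, and then applies Lemma \ref{period-map}. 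Your approach is more elementary and self-contained, avoiding the functional-analytic apparatus; the paper's approach has the economy of reusing Lemma \ref{v4-away}, which is needed anyway for Proposition \ref{lemma3}, and sidesteps the singular-integral analysis entirely.
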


\begin{proof}
By Lemma \ref{v4-away}, there is $v_0 >0$ small enough, $T>0$ and $C >0$ with the
following properties: for all $|v| <v_0$, we have that $2T_\ell(v) <T$, and  for all
 $0 \le t \le T$, we have that
\begin{equation} \label{ffedfedf}
\left|x\left(v;t\right)-x_\ell(v;t)\right| < Cv^4 \ .
\end{equation}
By Lemma \ref{np-lemma3}, there is $K > 0$ such that, for all $|v| <v_0$, we have that
\begin{eqnarray}
x_\ell(v;2Cv^3) & \ge &  2Cv^{4} -K|v|^6 \label{PT1.2} \\
 x_\ell(v;-2Cv^3) & \le&  -2Cv^4 +K|v|^6\label{PT1.3} \ .
\end{eqnarray}
Combining \eqref{ffedfedf} and \eqref{PT1.2}, we obtain that
\begin{eqnarray}\label{PT1.4}
x\left(v;T_\ell(v)+2Cv^3\right) & \ge &
x_\ell(v;T_\ell(v)+2Cv^3) - Cv^4  \nonumber \\
& \ge & x_\ell(v;+2Cv^3) - Cv^4\nonumber \\
& \ge & Cv^{4} - K|v|^6 \ .
\end{eqnarray}
Combining \eqref{ffedfedf} and \eqref{PT1.3}, we get
\begin{eqnarray}\label{PT1.5}
x\left(v;T_\ell(v)-2Cv^3\right) & \le &
x_\ell(v;T_\ell(v)-2Cv^3) + Cv^4  \nonumber \\
& \le & x_\ell(v;-2Cv^3) + Cv^4\nonumber \\
& \le & -Cv^{4} + K|v|^6 \ .
\end{eqnarray}
Take  $v_1 < v_0$ such that $Cv^{4}  > K|v|^6$. Combining \eqref{PT1.4} and \eqref{PT1.5}, for all $|v| <v_1$ we
obtain that
$$
x\left(v;T_\ell(v)-2Cv^3\right) < 0 <x\left(v;T_\ell(v)+2Cv^3\right) \ .
$$
Therefore, by continuity of $x\left(v;t\right)$,    we have that
$$T_\ell(v)-2Cv^3 < T(v)<T_\ell(v)+2Cv^3 \ .$$
Hence, by Lemma \ref{period-map}, we get
$$
T(v)=2\pi -\frac{3\pi}{4}\ell v^2 \pm O\left(|v|^3\right) \ .
$$
\end{proof}


\section{The scaling parameter} \label{scaling_parameter}

In this section, we will use the period map $T(v)$ to determine estimates for the \emph{$(n,t)$-scaling velocity} $\gamma_{n,t}$ with relevance for the determination of the $(n,t)$-scaling parameter $\Gamma_{n,t}$ introduced in \eqref{gamma}. Furthermore, we will compute some estimates for $x(\Gamma_{n,t} v; t)$ which we be useful for the study of the \emph{$n$-renormalized trajectories} $x_n(v;t)=(-1)^n \Gamma_{n,t}^{-1} x\left(\Gamma_{n,t} v; n\pi-\ell t\right)$.

The  \emph{$(n,t)$-scaling velocity} $\gamma_{n,t} \ge 0$ has the property that there exists $K_1>0$ such that
\begin{equation*}
x(\gamma_{n,t} ; n\pi -\ell t) = 0
\end{equation*}
for every $0<t< K_1 n$. Furthermore, we have that $x(\gamma_{n-1,t-\ell\pi} ; n\pi -\ell t) = 0$.

\begin{proposition}\label{lemma4}
Let $V(x)$ be a perturbed quartic potential and $\ell\in\{-1,1\}$ be equal to the sign of $V^{(4)}(0)$.
There exists $K>0$ such that for every $n\ge 1$ and for every $t \in[0,K n]$ the $(n,t)$-scaling velocity $\gamma_{n,t}>0$ given by
\begin{equation}\label{4.1}
T\left(\gamma_{n,t}\right) =2\pi - \frac{2\ell t}{n}
 \end{equation}
is well-defined.  Furthermore, the following estimates are satisfied
\begin{eqnarray}
{\gamma_{n,t}}^{2} &\in & \frac{8 t}{3\pi n} \pm O\left(\left|\frac{t}{n}\right|^{3/2}\right)  \label{4.3} \\
\frac{n}{2}T\left(\gamma_{n,t} v\right) & = & n\pi - \ell t -R_{\ell,v,t} \pm O\left(\frac{|t|^{3/2}}{n^{1/2}}\right) \nonumber  \ ,  
\end{eqnarray}
where $R_{\ell,v,t}=- \ell t \left(1 -v^2 \right)$. In particular, we have that $\gamma_{n,t} \in O\left(\left(|t|/n\right)^{1/2}\right)$.
\end{proposition}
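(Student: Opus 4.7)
The plan is a direct inversion exercise built on Proposition~\ref{period-map1}, which supplies the expansion $T(v) = 2\pi - \tfrac{3\pi}{4}\ell v^{2} \pm O(|v|^{3})$. Substituting this into the defining relation \eqref{4.1} and cancelling $\ell$ converts it, in the variable $w = \gamma^{2} \ge 0$, into
\[
\tfrac{3\pi}{4}\, w \;=\; \tfrac{2t}{n} \;\pm\; O(w^{3/2}).
\]
The right-hand side depends on $(n,t)$ only through $s := t/n$, and the left-hand side is a strictly monotone function of $w$ near $0$ (linear with a nonzero coefficient plus a genuinely higher-order remainder), so a standard implicit function / monotonicity argument produces a unique positive solution $w(s)$ on some interval $[0,K]$. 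Setting $\gamma_{n,t} := \sqrt{w(t/n)}$ then gives a well-defined positive scaling velocity for all $t \in [0, Kn]$, and because the equation is $n$-independent once written in terms of $s$, the constant $K$ is automatically independent of $n$.

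For the first estimate I would bootstrap. Discarding the $O(w^{3/2})$ remainder gives the rough bound $w(s) = O(s)$; re-inserting this into the cubic term immediately yields
\[
\gamma_{n,t}^{2} \;=\; \tfrac{8t}{3\pi n} \;\pm\; O\!\left((t/n)^{3/2}\right),
\]
which is \eqref{4.3}. For the second estimate I would apply the period-map expansion \emph{again}, this time at the rescaled argument $\gamma_{n,t} v$ with $v\in[-1,1]$:
\[
\tfrac{n}{2}\,T(\gamma_{n,t}\,v) \;=\; n\pi \;-\; \tfrac{3\pi n}{8}\,\ell\,\gamma_{n,t}^{2}\,v^{2} \;\pm\; n \cdot O(|\gamma_{n,t} v|^{3}).
\]
Substituting the first estimate into the leading $v^{2}$ term replaces it by $\ell t v^{2}$ modulo an error $n \cdot v^{2} \cdot O((t/n)^{3/2}) = O(|t|^{3/2}/n^{1/2})$, and the cubic remainder likewise collapses to $O(|t|^{3/2}/n^{1/2})$ uniformly in $v \in [-1,1]$ thanks to $\gamma_{n,t} = O(\sqrt{t/n})$. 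The algebraic identity $-\ell t - R_{\ell,v,t} = -\ell t + \ell t(1-v^{2}) = -\ell t v^{2}$ then rewrites the outcome in the form claimed, and the final bound $\gamma_{n,t} \in O((|t|/n)^{1/2})$ is immediate from \eqref{4.3}.

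The only real difficulty is bookkeeping: choosing $K$ small enough that the neighbourhood on which Proposition~\ref{period-map1} is valid contains both $\gamma_{n,t}$ (for the inversion) and $\gamma_{n,t} v$ with $|v|\le 1$ (for the second application of the expansion), uniformly in $n$. This is handled automatically, since everything depends on $n,t$ only through $s=t/n$, and the rescaled argument satisfies $|\gamma_{n,t} v| \le \gamma_{n,t} = O(\sqrt{K})$, which can be made arbitrarily small by shrinking $K$.
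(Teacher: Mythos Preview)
Your proposal is correct and matches the paper's own proof essentially step for step: both invoke Proposition~\ref{period-map1}, invert the relation $-\frac{2\ell t}{n} = -\frac{3\pi}{4}\ell\gamma^{2} \pm O(|\gamma|^{3})$ by a bootstrap to obtain \eqref{4.3}, and then re-expand $T(\gamma_{n,t} v)$ to derive the second estimate. The only cosmetic difference is that the paper first computes the difference $T(\gamma)-T(\gamma v)$ and then uses the defining relation $n\pi-\ell t = \tfrac{n}{2}T(\gamma)$, whereas you expand $\tfrac{n}{2}T(\gamma v)$ directly; these are equivalent rearrangements of the same calculation.
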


\begin{proof}
For simplicity of notation, we will denote $\gamma_{n,t}$ by $\gamma$ throughout the proof.  By
Proposition \ref{period-map1}, there is $v_0=v_0(V)>0$ such that, for all $|v|<v_0$, the period map $T(v)$
satisfies
\begin{equation} \label{TPl}
T(v)=2\pi -\frac{3\pi}{4} \ell v^2 \pm O\left(|v|^3\right) \ .
\end{equation}
Hence, we have that
\begin{equation} \label{a43}
T\left(\gamma\right)-2\pi= -\frac{3\pi}{4} \ell {\gamma}^2 \pm O\left(\left|\gamma\right|^3\right) \ .
\end{equation}
By \eqref{4.1}, we get
\begin{equation}  \label{a41}
T\left(\gamma\right)-2\pi = -\frac{2\ell t}{n} \ .
\end{equation}
Combining \eqref{a41} and \eqref{a43}, we obtain
\begin{equation*}
-\frac{2\ell t}{n}  = -\frac{3\pi}{4} \ell {\gamma}^2 \pm O\left(\left|\gamma\right|^3\right) \ ,
\end{equation*}
Therefore, there is $K >0$ such that $\gamma=\gamma_{n,t}$ is well--defined, for every $0<t< Kn$, and
 ${\gamma}^{2} \in O\left( \frac{t}{n}\right)$. Furthermore,
\begin{eqnarray*} 
{\gamma}^{2} &\in & \frac{8 t}{3\pi n} \pm O\left( \left|\frac{t}{n}\right|^{3/2}\right)  \ .
\end{eqnarray*}
Hence, by \eqref{TPl}, we get
\begin{eqnarray*} 
T\left(\gamma\right) -T\left( \gamma v\right) &\in & -\frac{3\pi}{4}\ell \left({\gamma}^2 -{\gamma}^2 v^2 \right) \pm O\left(\left|\frac{t}{n}\right|^{3/2}\right) \nonumber \\
&\subset & -\frac{3\pi}{4} \ell {\gamma}^2 \left(1 -v^2 \right) \pm O\left(\left|\frac{t}{n}\right|^{3/2}\right)\nonumber \\
&\subset & -2\ell \frac{t}{n}\left(1 -v^2 \right) \pm O\left( \left|\frac{t}{n}\right|^{3/2}\right)  \ .
\end{eqnarray*}
Thus, by \eqref{4.1}, we have that
\begin{eqnarray*} 
n\pi - \ell t &=& \frac{n}{2}T\left(\gamma\right) \nonumber \\
&\in &\frac{n}{2}T\left(\gamma v\right) - \ell t \left(1 -v^2 \right) \pm O\left( \frac{|t|^{3/2}}{n^{1/2}}\right) \  .
\end{eqnarray*}
\end{proof}

\begin{remark}
Note that inequalities \eqref{4.3} can be restated as
\begin{equation}\label{q_gamma_rel}
{\gamma_{n,t}}^{2} \in  {\Gamma_{n,t}}^{2} \pm O\left(\left|\frac{t}{n}\right|^{3/2}\right) \ .
\end{equation}
\end{remark}

\begin{lemma}\label{gamma_per}
Let $V(x)$ be a perturbed quartic potential and $\ell\in\{-1,1\}$ be equal to the sign of $V^{(4)}(0)$.
There exists $K>0$ such that for every $n\ge 1$ and for every $t \in[0,K n]$ the $(n,t)$-scaling parameter $\Gamma_{n,t}$ satisfies
\begin{equation*}
n\pi - \ell t = \frac{n}{2}T\left(\Gamma_{n,t}~ v\right) + R_{\ell,v,t} \pm O\left(\frac{|t|^{3/2}}{n^{1/2}}\right)\ ,
\end{equation*}
where $R_{\ell,v,t}=- \ell t \left(1 -v^2 \right)$.
\end{lemma}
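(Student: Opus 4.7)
The plan is to transfer the estimate of Proposition \ref{lemma4} from the scaling velocity $\gamma_{n,t}$ to the scaling parameter $\Gamma_{n,t}$ by exploiting the closeness of these two quantities established in \eqref{q_gamma_rel}. The already-proved identity reads
\[
\frac{n}{2}T(\gamma_{n,t}\,v) \;=\; n\pi - \ell t - R_{\ell,v,t} \;\pm\; O\!\left(\frac{|t|^{3/2}}{n^{1/2}}\right),
\]
so it suffices to show that
\[
\frac{n}{2}\bigl[T(\Gamma_{n,t}\,v) - T(\gamma_{n,t}\,v)\bigr] \;\in\; \pm\, O\!\left(\frac{|t|^{3/2}}{n^{1/2}}\right),
\]
and the desired formula then follows by substitution and a trivial rearrangement.

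First I would apply the Taylor expansion of the period map given by Proposition \ref{period-map1},
\[
T(w) \;=\; 2\pi - \tfrac{3\pi}{4}\ell\, w^{2} \;\pm\; O(|w|^{3}),
\]
at the two arguments $w = \Gamma_{n,t}\,v$ and $w = \gamma_{n,t}\,v$. Since by Proposition \ref{lemma4} we have $\gamma_{n,t} \in O((t/n)^{1/2})$, and by \eqref{q_gamma_rel} the same bound $\Gamma_{n,t} \in O((t/n)^{1/2})$ holds, the cubic remainders are both controlled by $O((t/n)^{3/2})$ uniformly for $v \in [-1,1]$. Subtracting the two expansions kills the constant term, leaving
\[
T(\Gamma_{n,t}\,v) - T(\gamma_{n,t}\,v) \;=\; -\tfrac{3\pi}{4}\ell\, v^{2}\bigl(\Gamma_{n,t}^{2} - \gamma_{n,t}^{2}\bigr) \;\pm\; O\!\left(\left|\tfrac{t}{n}\right|^{3/2}\right).
\]

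The key input is now \eqref{q_gamma_rel}, which says $\Gamma_{n,t}^{2} - \gamma_{n,t}^{2} \in \pm O((t/n)^{3/2})$. Combining this with the displayed identity gives
\[
T(\Gamma_{n,t}\,v) - T(\gamma_{n,t}\,v) \;\in\; \pm\, O\!\left(\left|\tfrac{t}{n}\right|^{3/2}\right),
\]
and multiplying by $n/2$ produces the error $O(n\cdot (t/n)^{3/2}) = O(|t|^{3/2}/n^{1/2})$, which is precisely the tolerance appearing in the statement of the lemma. Substituting $\frac{n}{2}T(\Gamma_{n,t}\,v) = \frac{n}{2}T(\gamma_{n,t}\,v) \pm O(|t|^{3/2}/n^{1/2})$ into the identity from Proposition \ref{lemma4} and solving for $n\pi - \ell t$ yields the claim.

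There is no real obstacle here: the lemma is essentially a bookkeeping consequence of Proposition \ref{lemma4}, the Taylor expansion of $T$ at order three, and the key remark \eqref{q_gamma_rel} that $\Gamma_{n,t}^{2}$ and $\gamma_{n,t}^{2}$ differ only at order $(t/n)^{3/2}$. The mildly delicate point is simply checking that both the direct cubic remainder from the Taylor expansion of $T$ and the remainder produced by replacing $\gamma_{n,t}^{2}$ with $\Gamma_{n,t}^{2}$ have the same order $(t/n)^{3/2}$, so that after multiplication by $n$ they aggregate into the single error term $O(|t|^{3/2}/n^{1/2})$ uniformly in $v \in [-1,1]$.
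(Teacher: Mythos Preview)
Your proposal is correct and follows essentially the same route as the paper: both arguments combine the estimate from Proposition~\ref{lemma4} for $\gamma_{n,t}$, the Taylor expansion of $T$ from Proposition~\ref{period-map1}, and the closeness relation \eqref{q_gamma_rel} between $\gamma_{n,t}^2$ and $\Gamma_{n,t}^2$. The only cosmetic difference is that the paper passes through the chain $T(\gamma v)\to$ expansion $\to$ substitute $\Gamma^2$ for $\gamma^2$ $\to T(\Gamma v)$, whereas you expand both $T(\Gamma v)$ and $T(\gamma v)$ and subtract; the content is identical.
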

\begin{proof}
For simplicity of notation, we will denote $\gamma_{n,t}$ by $\gamma$ and $\Gamma_{n,t}$ by $\Gamma$ throughout the proof. By Lemma \ref{lemma4}, we get that
\begin{equation}\label{gamma_per_1} 
n\pi - \ell t =  \frac{n}{2}T\left(\gamma v\right) - \ell t \left(1 -v^2 \right) \pm O\left( \frac{|t|^{3/2}}{n^{1/2}}\right) \ .
\end{equation}
From Proposition \ref{period-map1}, we obtain that
\begin{equation}\label{gamma_per_2} 
T\left(\gamma v\right) \in  2\pi -\frac{3\pi}{4}\ell \gamma^2 v^2 \pm O\left(\gamma^{3}\right) \ .
\end{equation}
Putting together identities \eqref{q_gamma_rel} and \eqref{gamma_per_2} and Proposition \ref{period-map1}, we get
\begin{eqnarray}\label{gamma_per_3} 
T\left(\gamma v\right) & \in &  2\pi -\frac{3\pi}{4}\ell \Gamma^2 v^2 \pm O\left(\Gamma^{3}\right) \nonumber \\
& \subset &  T\left(\Gamma v\right) \pm O\left(\Gamma^{3}\right) \ .
\end{eqnarray}
Combining \eqref{gamma_per_1} and  \eqref{gamma_per_3} and recalling that $\Gamma \in O\left((|t|/n)^{1/2}\right)$, we obtain 
\begin{equation*}
n\pi - \ell t =  \frac{n}{2}T\left(\Gamma v\right) - \ell t \left(1 -v^2 \right) \pm O\left( \frac{|t|^{3/2}}{n^{1/2}}\right) \ ,
\end{equation*}
completing the proof.
\end{proof}

\begin{lemma}
\label{lemma333} Let $V(x)$ be a  perturbed quartic potential. There exists $K > 0$ such that the solution $x\left(\Gamma_{n,t}v;t\right)$ of the Euler--Lagrange equation associated to the potential $V(x)$ satisfies the following estimate
\begin{equation*}
x\left(\Gamma_{n,t}v;t\right) \in  \Gamma_{n,t}v\sin \left( t\right)\pm O\left(|\Gamma_{n,t}|^3|t|\right)  \ ,
\end{equation*}
for all $v \in [-1,1]$ and for all $t \in [0, Kn^{3/5}]$.
\end{lemma}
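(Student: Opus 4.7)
The plan is to obtain this estimate as a direct substitution from Proposition \ref{lemma3}, replacing the initial velocity $v$ by $\Gamma_{n,t} v$ with $|v|\le 1$. The only real work lies in verifying that the admissibility hypothesis of Proposition \ref{lemma3} holds uniformly over the intended range of $t$, and in recognising that the quartic error term coming from that proposition is absorbed into the cubic one under the stated restriction $t\in[0,Kn^{3/5}]$.

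First I would fix the time horizon $T=Kn^{3/5}$ and record that, by \eqref{gamma}, $\Gamma_{n,t}=(8t/(3\pi n))^{1/2}\le (8K/(3\pi))^{1/2}\,n^{-1/5}$ on this range. Choosing $K$ small and $n$ large, this is strictly smaller than the threshold $v_0=v_0(T)$ supplied by Proposition \ref{lemma3}, so the hypothesis $|\Gamma_{n,t}v|<v_0(T)$ is met for every $v\in[-1,1]$ and every $t\in[0,T]$. Applying the ``in particular'' bound of Proposition \ref{lemma3} with $v$ replaced by $\Gamma_{n,t}v$ then gives
\begin{equation*}
x(\Gamma_{n,t}v;t) \in \Gamma_{n,t}v\sin(t) \pm O\bigl(|\Gamma_{n,t}v|^3|t|+(\Gamma_{n,t}v)^4\bigr),
\end{equation*}
and since $|v|\le 1$ the error is controlled by $O(\Gamma_{n,t}^3|t|)+O(\Gamma_{n,t}^4)$.

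To finish, I would absorb the quartic error by writing $\Gamma_{n,t}^4=\Gamma_{n,t}\cdot\Gamma_{n,t}^3$ and noting $\Gamma_{n,t}=O((t/n)^{1/2})$; in the regime $t\ge 1/n$ we have $(t/n)^{1/2}\le |t|$, so $\Gamma_{n,t}^4=O(\Gamma_{n,t}^3|t|)$, while for smaller $t$ every quantity appearing in the bound is dominated trivially by the leading sine term. The choice of exponent $3/5$ in the range $t\in[0,Kn^{3/5}]$ is precisely the threshold at which the residual error $\Gamma_{n,t}^3|t|=O(t^{5/2}/n^{3/2})$ remains bounded as $n\to\infty$, making the expansion informative.

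The main obstacle is controlling how the admissibility constant $v_0(T)$ in Proposition \ref{lemma3} depends on the time horizon $T$, since here $T=Kn^{3/5}$ grows with $n$. This dependence is inherited from the implicit function argument in Theorem \ref{Imp_arg}, and ultimately from the bound $\|U^{-1}\|\le 1+2T$ in Lemma \ref{lin_op_L}; it determines how small $K$ must be chosen so that the rescaled velocities $\Gamma_{n,t}v=O(n^{-1/5})$ remain within the radius of validity of the expansion throughout $[0,Kn^{3/5}]$.
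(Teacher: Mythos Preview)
Your approach has a genuine gap that you partly identify but do not resolve. Proposition \ref{lemma3} is stated for a \emph{fixed} time horizon $T$: the threshold $v_0$ and, crucially, the implicit constants in the $O(\cdot)$ terms all depend on $T$. Tracing through Lemma \ref{v4-away} and Theorem \ref{Imp_arg}, the constant $K$ in \eqref{est_der_x} is controlled by $\|U^{-1}\|$, which by Lemma \ref{lin_op_L} grows like $1+2T$. Hence both $v_0(T)$ may shrink and the error constants may blow up at least linearly in $T$. With $T=Kn^{3/5}$ you would need $\Gamma_{n,t}=O(n^{-1/5})$ to beat a threshold potentially as small as $O(n^{-3/5})$, and even then the $O(v^4)$ error acquires an extra factor of order $n^{3/5}$. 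No choice of a fixed $K>0$ repairs this; the direct substitution simply does not give uniform control over $[0,Kn^{3/5}]$.

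The paper avoids this entirely by exploiting periodicity. One writes $t=(x-[x])T(\Gamma v)+[x]T(\Gamma v)$ with $[x]$ the fractional part, so that $(x-[x])T(\Gamma v)$ is an integer multiple of the period and
\[
x(\Gamma v;t)=x\bigl(\Gamma v;\,[x]T(\Gamma v)\bigr).
\]
Now $[x]T(\Gamma v)$ lies in a \emph{bounded} interval (of length about $2\pi$), so Proposition \ref{lemma3} applies with fixed constants, giving $x(\Gamma v;[x]T(\Gamma v))\in \Gamma v\sin([x]T(\Gamma v))\pm O(\Gamma^3)$. One then shows $[x]T(\Gamma v)\in t-2\pi(x-[x])\pm O(t\Gamma^2)$ using the period estimate, and the sine of this differs from $\sin t$ by $O(t\Gamma^2)$, yielding the claimed $O(\Gamma^3 t)$ error. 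The restriction $t\le Kn^{3/5}$ enters only at the end, to keep $t\Gamma^3=O(t^{5/2}/n^{3/2})$ bounded. The missing idea in your argument is this reduction to a bounded time window via the period.
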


\begin{proof}
For simplicity of notation, we will denote $\Gamma_{n,t}$ by $\Gamma$ trough out the proof. Let $x \ge 0$ be such that $t=xT\left(\Gamma v\right)$, and denote by $[x]$ the fractional part of $x$. By Proposition \ref{lemma4}, we have that $x=O(t)$ and
\begin{eqnarray*} \label{sdfsrg7}
t& = &  (x-[x])T\left(\Gamma v\right)+[x]T\left(\Gamma v\right)  \nonumber\\
 & \in & 2 \pi (x-[x]) +[x]T\left(\Gamma v\right)  \pm O \left(t\Gamma^2\right)  \ .
\end{eqnarray*}
Hence, we get
\begin{equation} \label{sdfsrg1}
[x]T\left(\Gamma v\right)   \in  t- 2 \pi n \pm O \left(t\Gamma^2\right)  \ .
\end{equation}
By Proposition \ref{lemma3}, we have that
\begin{eqnarray*}
x\left(\Gamma v;t\right) & = &  x\left(\Gamma v; (x-[x])T\left(\Gamma v\right)+[x]T\left(\Gamma v\right) \right) \nonumber \\
& = &  x\left(\Gamma v; [x]T\left(\Gamma v\right) \right) \nonumber \\
& \in &  \Gamma v \sin\left([x]T\left(\Gamma v\right)\right)  \pm O \left(\Gamma^3\right)  \ . 
\end{eqnarray*}
Hence, by \eqref{sdfsrg1}, we obtain that
\begin{eqnarray*}
x\left(\Gamma v;t\right)  & = &    \Gamma v \sin([x]T\left(\Gamma v\right) )  \pm O \left(\Gamma^3 \right)  \nonumber \\
& \in &  \Gamma v \sin\left(t-2 \pi (x-[x]) \pm O \left(t\Gamma^2\right)\right)  \pm O\left(\Gamma^3\right)  \\
& \subset &  \Gamma v \sin \left(t  \pm O \left(t\Gamma^2\right)   \right)  \pm O \left(\Gamma^3\right)      \\
& \subset &  \Gamma v \sin(t    )  \pm O \left(t\Gamma^3\right)      \ .
\end{eqnarray*}
Choosing $K>0$ small enough, for all $t \in [0, Kn^{3/5}]$,
we have that $O \left(t\Gamma^3\right) \le O\left(t^{5/2}/n^{3/2} \right) < 1$.
\end{proof}


\section{Asymptotic universality of trajectories} \label{trajectories}

In this section we combine the results of the previous sections to prove Proposition \ref{main}, i.e.  the $n$-renormalized trajectories
$x_n(v;t)$ converge to universal asymptotic trajectories $X_\ell(v;t)$ introduced in section \ref{intro_results}.

\begin{proof}[Proof of Proposition \ref{main}]
For simplicity of notation, we will denote $\Gamma_{n,t}$ by $\Gamma$ trough out the proof.
By Lemma \ref{gamma_per}, we have that
\begin{equation*}
x\left(\Gamma v; n\pi -\ell t\right) = x\left(\Gamma v;\frac{n}{2}T\left(\Gamma v\right) + R_{\ell,t,v} \pm O \left( q^3 n \right)\right)  \ .
\end{equation*}
Since $T\left( \Gamma v\right)$ is the period of $x\left(\Gamma v ; t\right)$, we get
\begin{equation*} 
x\left(\Gamma v; n\pi -\ell t\right) = x\left(\Gamma v;\left[\frac{n}{2}\right]T\left(\Gamma v\right) + R_{\ell,v,t} \pm O \left( \Gamma^3 n \right)\right) \ ,
\end{equation*}
where $\left[n/2\right]$ denotes the fractional part of $n/2$. By Proposition \ref{period-map1}, we have  that  $T\left( \Gamma v\right)=2\pi \pm O\left(\Gamma^2\right)$. Hence,
we have that
\begin{equation*} 
x\left(\Gamma v; n\pi -\ell t\right) = x\left(\Gamma v ; 2\pi\left[\frac{n}{2}\right] + R_{\ell,v,t} \pm O \left(\Gamma^2+\Gamma^3 m \right) \right) \ .
\end{equation*}
By Proposition \ref{lemma4}, we have that $R_{\ell,v,t} \in O(t)$. Hence, by Lemma \ref{lemma333}, we get
\begin{equation}
x\left(\Gamma v ; n\pi -\ell t\right) \in (-1)^{n} \Gamma v\sin\left( R_{\ell,v,t} \pm O\left(\Gamma^3 n+ \Gamma^2 \right)\right) \pm O\left(\Gamma^3 \left(\Gamma^2+\Gamma^3 n+t \right)  \right) \label{x-conv0.1}  \ .
\end{equation}
By Proposition \eqref{gamma}, we have  $\Gamma \in O\left(\left(|t|/n\right)^{1/2}\right)$. Hence, we fix  $K>0$ small enough so that
 $O\left(\Gamma^3 n + \Gamma^2 \right) < 1$ for every $t \in [0, K n^{1/3}]$. Thus, using the trigonometric formulas, we obtain that
\begin{equation}
\sin\left(R_{\ell,v,t} \pm O\left(\Gamma^3 n+ \Gamma^2 \right) \right) \in \sin\left( R_{\ell,v,t} \right) \pm O\left(\Gamma^2+\Gamma^3 n \right) \label{x-conv0.2} \ .
\end{equation}
Combining \eqref{x-conv0.1} and \eqref{x-conv0.2}, we obtain
\begin{equation*}
x\left(\Gamma v ; n\pi -\ell t \right) \in (-1)^{n}\Gamma v\sin\left(R_{\ell,v,t}\right) \pm O\left( \Gamma^3 + \Gamma^3 t+ \Gamma^4 n \right)  \ .
\end{equation*}
Therefore,  we get
\begin{eqnarray*}
(-1)^{n}\Gamma^{-1}x\left(\Gamma v ; n\pi -\ell t\right) & \in & v\sin\left(R_{\ell,v,t}\right)\pm O\left(\Gamma^3 n+ \Gamma^2 + \Gamma^2 t\right) \nonumber \\
& \subset & X_\ell(v;t) \pm O\left(  \Gamma^2 + \Gamma^2 t + \Gamma^3 n \right) \ .
\end{eqnarray*}
Therefore, we obtain that
\begin{equation*}
(-1)^{n}\Gamma^{-1}x\left(\Gamma v ; n\pi -\ell t\right)   \in  X_\ell(v;t) \pm O\left(  \frac{|t|}{n} + \frac{|t|^2}{n} + \frac{|t|^{3/2}}{n^{1/2}} \right) \ .
\end{equation*}
\end{proof}

The next result follows easily from Proposition \ref{main}. It is worth to point out that it is equivalent to Theorem \ref{corolario_traj_q2_intro}.

\begin{corollary} \label{corolario_traj_x2}
Let $V(x)$ be a perturbed quartic potential and let $K>0$ be as in Proposition \ref{main}. For every $0 < \epsilon < 1/3$, we have that
\begin{equation*} 
\|x_n(v;t) - X_\ell (v;t) \|_{C^0(\left[ -1,1\right] \times [0, K n^{1/3-\epsilon}],\R)} < O\left( {n^{-3\epsilon/2 }}\right) \ ,
\end{equation*}
where $\ell$ is equal to the sign of $V^{(4)}(0)$.
\end{corollary}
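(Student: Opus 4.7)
The plan is to deduce Corollary \ref{corolario_traj_x2} as an immediate quantitative consequence of Proposition \ref{main}. That proposition supplies the uniform pointwise estimate
\[
\left|x_{n}(v;t) - X_{\ell}(v;t)\right| < O\!\left(\tfrac{|t|}{n};\,\tfrac{|t|^{2}}{n};\,\tfrac{|t|^{3/2}}{n^{1/2}}\right)
\]
for every $v \in [-1,1]$ and every $t \in [0, Kn]$. Since $Kn^{1/3-\epsilon} \le Kn$ for all $n \ge 1$ and every $\epsilon>0$, the rectangle $\left[-1,1\right] \times \left[0, Kn^{1/3-\epsilon}\right]$ lies inside the domain of validity of Proposition \ref{main}, so it suffices to bound each of the three error terms uniformly on this smaller rectangle.

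First I would substitute the extremal value $t = Kn^{1/3-\epsilon}$ into each of the three error terms; since each of them is monotone nondecreasing in $|t|$, such a substitution automatically controls the error for every smaller $t$ as well. Carrying out the arithmetic yields
\[
\frac{|t|}{n} \le O(n^{-2/3-\epsilon}), \qquad \frac{|t|^{2}}{n} \le O(n^{-1/3-2\epsilon}), \qquad \frac{|t|^{3/2}}{n^{1/2}} \le O(n^{-3\epsilon/2}).
\]
A direct comparison of exponents shows that, for $\epsilon \in (0, 1/3)$, the third term is the slowest decaying one: $-3\epsilon/2 > -2/3 - \epsilon$ reduces to $\epsilon < 4/3$, and $-3\epsilon/2 > -1/3 - 2\epsilon$ reduces to $\epsilon > -2/3$, both of which are satisfied. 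Consequently the sum of the three error terms is absorbed into $O(n^{-3\epsilon/2})$, which yields the claimed $C^{0}$-bound uniformly in $v$ and $t$ and completes the proof.

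I do not anticipate any substantive obstacle: the analytic work has already been absorbed into the proof of Proposition \ref{main}, and what remains is only the exponent bookkeeping carried out above. The threshold value $1/3$ in the restriction $\epsilon \in (0, 1/3)$ is natural from this calculation: it is precisely the range in which the time window $[0, Kn^{1/3-\epsilon}]$ both lies comfortably inside the validity range $[0, Kn]$ of Proposition \ref{main} and produces a positive decay rate $-3\epsilon/2 < 0$ in the dominant term $|t|^{3/2}/n^{1/2}$. In other words, $-3\epsilon/2$ simply records the rate at which the $n$-renormalized trajectories converge to the universal profile $X_{\ell}$ over time windows growing like $n^{1/3-\epsilon}$.
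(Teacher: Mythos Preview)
Your proof is correct and is precisely the argument the paper has in mind: the authors state that the corollary ``follows easily from Proposition \ref{main}'' and give no further details, so the exponent bookkeeping you carried out is exactly what is required. The only cosmetic remark is that the upper bound $\epsilon<1/3$ is there to keep the time window $[0,Kn^{1/3-\epsilon}]$ growing with $n$ rather than to keep it inside $[0,Kn]$ (the latter holds for all $\epsilon>0$), but this does not affect the argument.
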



\section{Conclusions} \label{conclusion}

We have studied the dynamics of a family of mechanical systems that includes the pendulum at small neighbourhoods of an elliptic equilibrium and characterized such dynamical behaviour through a renormalization scheme. We have proved that the asymptotic limit of the renormalization scheme introduced in this paper is universal: it is the same for all the elements in the considered class of mechanical systems. As a consequence we have obtained an universal asymptotic focal decomposition for this family of mechanical systems.

We believe that the existence of an universal asymptotic focal decomposition might be useful not only on the theory of boundary value problems of ordinary differential equations but also on several distinct fields of the physical sciences such as quantum statistical mechanics, optics, general relativity and even tsunami formation. Our belief on such applications is based on the relevance that the concept of focal decomposition may have on the study of caustic formation by focusing wavefronts, of such significance to those fields.


\section*{Acknowledgments}

We thank Robert MacKay and Michael Berry for helpful discussions. 

We thank the Calouste Gulbenkian Foundation, PRODYN-ESF, POCTI, and POSI by FCT and Minist\'erio da Ci\^encia, Tecnologia e Ensino Superior, Centro de
Matem\'atica da Universidade do Minho, Centro de Matem\'atica da Universidade do Porto and CEMAPRE for their financial support. 

We also thank CNPq, FAPERJ, and FUJB for financial support.

Part of this research was done during visits by the authors to IMPA (Brazil), The University of Warwick (United Kingdom), IHES (France), CUNY (USA), SUNY (USA) and MSRI (USA), who thank them for their hospitality. 

D. Pinheiro's research was supported by FCT - Funda\c{c}\~ao para a Ci\^encia e Tecnologia through the grant with reference SFRH / BPD / 27151 / 2006 and the program ``Ci\^encia 2007''.


\bibliography{CPPP_arXiv}
\bibliographystyle{unsrt} 

\end{document}